\def\Z{{\mathbb Z}}
\newtheorem{coro}{Corollary}[section]
\newtheorem{lemma}[coro]{Lemma}
\newtheorem{prop}[coro]{Proposition}
\newtheorem{rem}[coro]{Remark}
\newtheorem{thm}[coro]{Theorem}
\let \ep=\epsilon
\let \ga=\gamma
\let \be=\beta
\let \al=\alpha
\let \pr=\prime
\let \mb=\mathbb
\let \wt=\widetilde
\let \ra=\rightarrow
\newtheorem{defn}[coro]{Definition}
\def\Ap{{\operatorname{Ap}}}
\title{Cusps in $\mb{C}^3$ with prescribed ramification}
\date{\today}
\author{Ethan Cotterill}
\address{Instituto de Matem\'atica, Estat\'istica, e Computa\c{c}\~ao Cient\'ifica, UNICAMP, Rua S\'ergio Buarque de Holanda, 651, 13.083-859 Campinas SP, Brazil}
\email{cotterill.ethan@gmail.com}
\author{Nathan Kaplan}
\address{Department of Mathematics, University of California, Irvine, 419 Rowland Hall, Irvine, CA 92697, USA}
\email{nckaplan@math.uci.edu}
\author{Renata Vieira Costa}
\address{Instituto de Matem\'atica, UFF, Rua Prof Waldemar de Freitas, S/N, 24.210-201 Niter\'oi RJ, Brazil
}
\email{renatavico8@gmail.com}
\begin{document}

\subjclass{}
\keywords{Numerical semigroups, singularities, Severi varieties.}
\maketitle 

\begin{abstract}
We study value semigroups associated to germs of maps $\mb{C} \ra \mb{C}^3$ with fixed ramification profiles in a distinguished point. We then apply our analysis to deduce that Severi varieties of unicuspidal rational fixed-degree curves with value semigroup ${\rm S}$ in $\mb{P}^3$ are often reducible when ${\rm S}$ is either 1) the semigroup of a generic cusp whose ramification profile 
 is either a triple of successive multiples of a fixed integer or is a {\it supersymmetric} triple; or 2) a supersymmetric semigroup with ramification profile given by a supersymmetric triple. In doing so, we uncover 
new connections with additive combinatorics and number theory. 
\end{abstract}

\section{Introduction}
{\it Cusps}, or unirational algebraic curve singularities, are interesting for many reasons. In \cite{CLM}, we showed that the {\it generalized Severi variety} $M^n_{d,g}$ of morphisms $\mb{P}^1 \ra \mb{P}^n$ of degree $d$ with images of arithmetic genus $g$ is often not irreducible whenever $n \geq 3$. By contrast, the (classical) Severi variety $V_{d,g}$ of algebraic {\it plane} curves of fixed degree and genus is always irreducible, and equal to the closure of the subvariety of plane curves with the maximal number of simple nodes \cite{H}. In particular, {\it rational} plane curves of fixed degree $d$ and arithmetic genus $g$ always belong to the closure of the $g$-nodal locus in degree $d$. When $n \geq 3$, on the other hand, $M^n_{d,g}$ often contains components of dimension strictly larger than that of the $g$-nodal locus. Indeed, in \cite{CLM} we explicitly constructed such components by producing rational curves with cusps whose desingularizations had fixed {\it ramification profiles} equal to sequences of consecutive even numbers, but were otherwise generic. We were then able to conclude by bounding the genus (i.e., delta-invariants) of these {\it generic cusps} from below.

\medskip
In this paper, we will shed some additional light on the structure of $M^3_{d,g}$. Namely, we will determine the genera and dimensions of spaces of unicuspidal rational curves in $\mb{P}^3$ of fixed (sufficiently large) degree whose cusps have fixed ramification profiles ${\bf r}=(\al,\be,\ga)$ in a (countable) number of interesting cases. More precisely, we study the {\it generic value semigroup} ${\rm S}={\rm S}({\bf r})$ associated with a ramification profile ${\bf r}$ when ${\bf r}$ is either a triple of consecutive multiples of a given integer, or a {\it supersymmetric} triple in the sense of \cite{FGH}. As a byproduct of our analysis we show that asymptotically, the associated mapping spaces $M^3_{d,g; {\rm S}, {\bf r}}$ are always of larger-than-expected dimension. We also show that supersymmetric sequences ${\bf r}$ generate semigroups ${\rm S}$ whose associated mapping spaces $M^3_{d,g; {\rm S}, {\bf r}}$ are of larger-than-expected dimension; and in the process, we uncover an interesting connection with lattice point counts for simplices.

\subsection{Diophantine equations, Betti elements, and Severi varieties with excess dimension}
Given postive integers $\al < \be< \ga$, the value semigroup ${\rm S}$ of a generic map of power series $f=(f_1,f_2,f_3): \mb{C} \ra \mb{C}^3$ with vanishing orders ${\bf r}=(\al,\be,\ga)$ in a fixed point trivially contains $\al$, $\be$, and $\ga$, but determining the remaining generators of ${\rm S}$ is more delicate. The basic point here is that whenever there exist nonnegative integers $x$, $y$, $z$ and a positive integer $N$ for which either
\begin{equation}\label{diophantine}
   N =\al x+ \be y= \ga z,
   N =\al x+ \ga y= \be z, \text{ or }
   N = \be y+ \ga z = \al x.
\end{equation}
the element $N+1$ then belongs to ${\rm S}$. Indeed, whenever one of the three equalities in \eqref{diophantine} holds, $N+1$ is the $t$-valuation of the difference of monomials in the parameterizing functions $f_i$. As a result, coincidences among {\it factorizations} of integers as positive integer linear combinations of a fixed list of generators play a critical role in determining the value semigroup of a generic cusp associated with a fixed ramification profile. 

\medskip
 A closely related problem, studied in \cite{CLM, CLMR}, is that of determining the number of algebraically independent conditions imposed by cusps of a particular value semigroup and ramification type $({\rm S},{\bf r})$ on parameterizations $\mb{P}^1 \ra \mb{P}^n$ of fixed degree $d$. The structure of the factorization space of a given numerical semigroup ${\rm S}$, in turn, is shaped by its set of {\it Betti elements}, i.e., by those elements whose associated factorization graphs are disconnected; see \cite{Ch}. 

\medskip
Numerical semigroups whose sets of minimal generators contain subsets in arithmetic progression generally contain many Betti elements. On the other hand, numerical semigroups whose sets of Betti elements are singletons are precisely those minimally generated by {\it supersymmetric} $n$-tuples $\frac{a_1 \dots a_n}{a_i}$, $i=1,\dots,n$ derived from pairwise relatively prime integers $a_1,\dots,a_n$ \cite[Example 12]{GSOR}. In this paper, we focus on two objects of study:
\begin{enumerate}
\item 
Severi varieties of 
rational curves with generic cusps whose ramification profiles are triples that are either in arithmetic progression or supersymmetric; and
\item Severi varieties
of rational curves with supersymmetric value semigroup 
and ramification profiles that are supersymmetric triples ${\bf r}$. 
\end{enumerate}

\medskip
We say that a Severi variety $M^n_{d,g; {\rm S},{\bf r}}$ is {\it excess-dimensional} whenever its codimension inside $M^n_d$ is strictly less than $(n-2)g$, the codimension of $g$-nodal rational curves of degree $d$ in $\mb{P}^n$ when $d$ is sufficiently large. Given a strictly-increasing sequence of (strictly-) increasing positive integers ${\bf r}=(r_1,\dots,r_n)$, a {\it generic cusp with ramification profile ${\bf r}$} is an $n$-tuple of power series $f_i(t)$, $i=1,\dots,n$ whose $t$-adic valuations comprise ${\bf r}$ and whose higher-order coefficients are algebraically independent. A {\it generic semigroup adapted to ${\bf r}$} is the value semigroup associated with such an $n$-tuple of power series $f_i$, i.e., the set of valuations realized by the algebra they generate. 

\medskip
The characteristic feature of a generic semigroup ${\rm S}$ adapted to a given ramification profile ${\bf r}=(r_1,\dots,r_n)$ is that its associated codimension, i.e., that of $M^n_{d,g; {\rm S},{\bf r}}$, is predictable: indeed, it is precisely 
$\sum_{i=1}^n (r_i-i)-1$, in which the instance of $-1$ accounts for variation in the preimage of the underlying cusp. To certify that a Severi variety $M^n_{d,g; {\rm S},{\bf r}}$ is excess-dimensional when ${\rm S}$ is the generic semigroup adapted to ${\bf r}$, it therefore suffices to bound the genus $g({\rm S})$ of ${\rm S}$ from below; to do so, we use elementary valuation-theoretic arguments. Genericity of (the higher-order coefficients of) the power series $f_i$ guarantees that when their respective valuations $m=m_i$ are clustered together, the large-$m$ asymptotics of $g({\rm S})$ are robust. Precisely determining the generic semigroup ${\rm S}$ adapted to a given ramification profile ${\bf r}$, appears to be difficult, if in principle algorithmically resolvable; see Remark~\ref{discrete_dynamical_system}. 

\medskip
It is also natural to try producing excess-dimensional cusps of a fixed type $({\rm S}, {\bf r})$, while fixing ${\rm S}$ but no longer requiring it to be the generic value semigroup adapted to ${\bf r}$ as in the preceding paragraph. One particularly interesting situation is that in which ${\bf r}$ comprises a minimal generating set for ${\rm S}$. In this paper, we further specialize to the case of supersymmetric semigroups. These are distinguished by their singleton Betti sets; moreover, according to \cite{CLMR}, their associated codimensions are combinatorially predictable, and modeled on the discrete volumes of explicit lattice simplices.  We exploit the symmetries of supersymmetric semigroups, in tandem with known estimates for lattice point counts in rational simplices, to deduce that the Severi varieties $M^3_{d,g;{\rm S},{\bf r}}$ derived from supersymmetric triples ${\bf r}$ and the semigroups ${\rm S}$ they generate are very often excess-dimensional. 

\subsection{Conventions} In this paper, we work over $\mb{C}$, and we make use of several standard semigroup- and singularity-theoretic notions. A {\it cusp} is a unibranch curve singularity; as such, it is locally prescribed by a ring map $\phi: \mb{C}[x_1,\dots,x_n] \ra \mb{C}[\![t]\!]$ whose image is its {\it local algebra}. The ring $\mb{C}[\![t]\!]$ is equipped with the {\it $t$-adic valuation} $v_t$ that computes the order of vanishing of a power series in $t$ at $t=0$; and the image of $v_t \circ \phi$ is the {\it value semigroup} of the cusp. 

\medskip
Every value semigroup $\rm{S}$ is a {\it numerical semigroup}, i.e., it has finite complement in $\mb{N} =\{0,1,2,\ldots\}$. 
The cardinality (resp., largest element) of $\mb{N} \setminus {\rm S}$ is the {\it genus $g=g({\rm S})$} (resp., {\it Frobenius number} $F=F({\rm S})$ of ${\rm S}$). The {\it conductor} $c$ of ${\rm S}$ is $F+1$. A numerical semigroup ${\rm S}$ is {\it symmetric} if and only if for every $x \in \mb{N}$, exactly one of $x$ or $F-x$ belongs to ${\rm S}$.
Positive integers $n_1,\ldots, n_\ell$ comprise a set of {\it generators} for ${\rm S}$ if every $s\in {\rm S}$ may be realized as an $\mb{N}$-linear combination of $n_1,\ldots, n_{\ell}$; whenever this is the case, we write ${\rm S} = \langle n_1,\ldots, n_{\ell}\rangle$. Sets of generators for a given semigroup ${\rm S}$ are partially-ordered by inclusion; and there is a unique minimal generating set, or set of {\it minimal generators}. We will often reference factorizations with respect to generating sets fixed at the outset, which may or may not be minimal. 
A {\it factorization} of $s \in {\rm S}$ with respect to generators $n_1,\dots,n_{\ell}$ is an $\ell$-tuple $(a_1,\dots,a_{\ell}) \in \mb{N}^{\ell}$ such that $\sum_{i=1}^{\ell} a_i n_i=s$; and $Z(s)$ denotes the set of all factorizations of $s$.

\subsection{Roadmap}
The plan for the remainder of the paper is as follows. In Section~\ref{generic_arithmetic_semigroups}, we produce explicit lower and upper bounds on the genera of generic cusps with fixed ramification profiles in arithmetic progression. Our approach to lower bounds on the genera of generic semigroups is predicated on Lemma~\ref{valuation_bound}, which gives an upper bound on the $t$-adic valuation of a linear combination of power series $f_i(t)$ in $t$ with generic higher-order coefficients. We then use Lemma~\ref{valuation_bound} to produce explicit upper and lower bounds on the $t$-adic valuations of polynomials of each degree $k \geq 1$ in triples of power series $f_i(t)$, $i=1,2,3$ whose underlying $t$-adic valuations are fixed. Lemma~\ref{valuation_bound_bis}, and its corollaries~\ref{cor_val1}, \ref{cor_val2} and \ref{genus_lb_generic_semigroup} are results tailored specifically to the case of ramification triples in arithmetic progression, which we generalize in Proposition~\ref{general_lower_genus_bound} to the setting of arbitrary ramification triples. The upshot is that whenever $m$ is sufficiently large and $a$ and $b$ are sufficiently small relative to $m$, the Severi variety of unicuspidal rational space curves with generic cusps of ramification type $(m,m+a,m+b)$ is excess-dimensional.

\medskip
We then turn to approximations to the generic semigroup ${\rm S}$ associated with a triple of consecutive even numbers. 
Theorem~\ref{generic_semigroup_consecutive_multiples} 
describes a first approximation
when ${\bf r}$ is a triple of consecutive multiples of an integer $m\geq 2$. Characterizing its gaps involves computing the Ap\'ery set of minimal elements in each congruence class modulo the multiplicity of the approximating semigroup; see Proposition~\ref{ApSprime}. This leads to Theorem~\ref{generic_semigroup_consecutive_multiples}, which gives upper bounds on the genera of generic semigroups derived from triples of consecutive multiples of a fixed integer. 

\medskip
Finally, Section~\ref{semigroups_from_supersymmetric_triples} is a close study of two distinguished classes of three-dimensional cusps with supersymmetric ramification profiles $(ab,ac,bc)$; namely, those with value semigroups ${\rm S}$ {\it generated} by supersymmetric ramification triples, and those whose parameterizations have generic higher-order coefficients. It is well-known that supersymmetric semigroups are {\it symmetric}, and when used in tandem with a dimension theorem for supersymmetric cusps proved in \cite{CLMR}, this symmetry allows us to rewrite the codimension associated with the pair $({\rm S},(ab,ac,bc))$ as a quantity with asymptotics modeled on the discrete volume of the lattice simplex with vertices $(0,0,0), (a,0,0), (0,b,0), (0,0,c)$.  In Theorem~\ref{excess_thm}, we show that a large class of cusps with supersymmetric value semigroups generated by supersymmetric ramification triples are excess-dimensional; and in Theorem~\ref{excess_thm2} we prove an analogous result for generic semigroups adapted to supersymmetric triples.
In Propositions~\ref{genus_S'}
and \ref{frob_S'}, we  compute the 
genus and Frobenius number, respectively, of a semigroup ${\rm S}^{\pr}$ containing ${\rm S}$ that approximates (and is contained in) the generic semigroup adapted to $(ab,ac,bc)$.

\section{Generic value semigroups with ramification $(m\ell, m\ell+m, m\ell+2m)$}\label{generic_arithmetic_semigroups}

As explained in \cite{CLM}, conditions imposed on points of the mapping space $M^n_d$ by cusps with numerical value semigroup ${\rm S}$ and ramification profile ${\bf r}$ are of two basic types. {\it Ramification} conditions, which may be read directly from ${\bf r}$, are linear in the coefficients of holomorphic maps, once the preimage $P$ of the cusp is fixed; there are $r_P= \sum_{i=1}^n (r_i-i)$ of these. Conditions {\it beyond ramification}, on the other hand, reflect (finer properties of) the additive structure of ${\rm S}$. Letting $b_P$ denote the number of algebraically independent conditions beyond ramification, the total number of conditions imposed by cusps of type $({\rm S},{\bf r})$ is $r_P+b_P-1$ whenever $d \geq \max(2g-2,n)$.

\medskip
For a {\it generic} cusp, we have $b_P=0$, and consequently the corresponding Severi variety $M^n_{d; ({\rm S},{\bf r})}$ is of codimension $\sum_{i=1}^n (r_i-i) -1$; letting $g=g({\rm S})$ denote the delta-invariant of the generic cusp, it therefore suffices to show that $\sum_{i=1}^n (r_i-i) \leq (n-2)g$ in order to conclude that the Severi variety $M^n_{d,g}$ is reducible. In particular, when $n=3$, reducibility of $M^3_{d,g}$ is guaranteed whenever $g \geq r_1+r_2+r_3-6$ and $d$ is sufficiently large relative to $g$.

\medskip
In this section we compute explicit lower and upper bounds on the genus of the value semigroup ${\rm S}$ of a generic cusp with ramification profile equal to a triple ${\bf r}=(m\ell,m\ell+m,m\ell+2m)$ of consecutive multiples of a fixed integer $m$. Our lower bounds imply that $g({\rm S}) \geq 3m\ell+3m-6$ whenever $\ell$ is sufficiently large, and consequently
$M^3_{d,g}$ is reducible. Our upper bounds on $g({\rm S})$, on the other hand, are realized as the genera of semigroup {\it approximations} contained in ${\rm S}$. As we explain in Remark~\ref{discrete_dynamical_system} below, the algorithmic approximation procedure we use may be viewed as a discrete dynamical system, and is therefore of independent interest.

\subsection{Lower bounds on the genera of generic semigroups}\label{lower_bounds_on_genus}

Our approach is predicated on repeated application of generalizations of the following elementary lemma.  Given a power series $f(t) = \sum_{n= 0}^\infty a_n t^n$ and a nonnegative integer $j$, $[t^j]f$ denotes the coefficient of $t^j$ in $f$ and $v_t(f)$ denotes the $t$-adic valuation of $f$.

\begin{lemma}\label{valuation_bound}
Suppose that $g_1,\ldots, g_N$ are generic power series in $t$.\footnote{Here ``genericity" means having generic higher-order coefficients, once the lowest-order terms are fixed.}  For any choice of nonzero coefficients $\al_1,\ldots, \al_N \in \mb{C}$, we have 
\[
v_t\left(\sum_{i=1}^N \al_i g_i\right) \le \min_i v_t(g_i) + N-1.
\]
\end{lemma}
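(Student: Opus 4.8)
The plan is to set $m := \min_i v_t(g_i)$ and $h := \sum_{i=1}^N \alpha_i g_i$, and to observe that the asserted bound $v_t(h) \le m+N-1$ is equivalent to the statement that at least one of the coefficients of $t^m, t^{m+1}, \dots, t^{m+N-1}$ in $h$ fails to vanish. Writing $g_i = \sum_{j\ge 0} a_{i,j}t^j$, the coefficient of $t^{m+k}$ in $h$ is the linear form $\sum_{i=1}^N \alpha_i a_{i,m+k}$, so it suffices to exhibit one value of $k \in \{0,\dots,N-1\}$ for which this form is nonzero on a generic choice of the higher-order coefficients. I would aim at the \emph{top} coefficient, $k=N-1$.

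First I would fix an index $i_0$ with $v_t(g_{i_0})=m$, which exists by definition of the minimum, and examine $[t^{m+N-1}]h = \sum_{i=1}^N \alpha_i a_{i,m+N-1}$. The key point is that $a_{i_0,m+N-1}$ is a \emph{free} higher-order coefficient of $g_{i_0}$: indeed $m+N-1 > m = v_t(g_{i_0})$ as soon as $N\ge 2$, so it lies strictly above the fixed leading term, and for $N=1$ it is itself the fixed nonzero leading coefficient and the statement is immediate. Since distinct $g_i$ carry independent coefficients, the variable $a_{i_0,m+N-1}$ occurs in no summand other than the one indexed by $i_0$. Hence, viewed as a polynomial in the collection of generic coefficients, $[t^{m+N-1}]h$ contains the degree-one monomial $\alpha_{i_0}\,a_{i_0,m+N-1}$ with $\alpha_{i_0}\neq 0$, and this monomial cannot be cancelled; the form is therefore not identically zero, so it is nonzero for generic coefficient values, giving $v_t(h)\le m+N-1$.

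The step I expect to be the main obstacle is the interaction of quantifiers: the lemma fixes the $g_i$ once and then requires the bound for \emph{every} nonzero tuple $(\alpha_1,\dots,\alpha_N)$, whereas the argument above produces, for each fixed $\alpha$, only a proper ``bad'' locus of coefficient choices to avoid (an adversarial $\alpha$ chosen in terms of the coefficients could force cancellation in any single prescribed coefficient). To make the conclusion robust I would upgrade to an induction on $N$. If $[t^m]h = \sum_{i:\,v_t(g_i)=m}\alpha_i a_{i,m}\neq 0$ we are done at once; otherwise at least two series attain the minimal valuation $m$, and subtracting a suitable multiple of one of them — Gaussian elimination on the $t^m$-terms — rewrites $h$, with the same nonzero coefficients $\alpha_i$, as a combination of $N-1$ generic series whose minimal valuation is now $m+1$, so that the inductive hypothesis yields $v_t(h)\le (m+1)+(N-1)-1 = m+N-1$. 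The one point requiring care is that the differenced series retain nonzero leading terms; this holds because each such leading term is a nontrivial combination, with fixed nonzero constant coefficients independent of the $\alpha_i$, of algebraically independent higher-order coefficients, and hence cannot vanish for any choice of $\alpha$.
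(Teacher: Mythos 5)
Your argument is correct, but it takes a genuinely different route from the paper's. The paper argues in one shot: after grouping the $g_i$ by valuation, the condition $v_t\left(\sum_{i=1}^N \alpha_i g_i\right) \ge \mu + N$ is an $N\times N$ homogeneous linear system in the $\alpha_i$ whose coefficient matrix has as entries the generic coefficients $a_{i,\mu+j}$; genericity makes this matrix invertible, so the system forces $\alpha=0$, and the bound holds for every nonzero $\alpha$ at once. This is precisely how the paper disposes of the quantifier problem you identify: invertibility is a property of the $g_i$ alone, independent of $\alpha$. Your induction via Gaussian elimination on the $t^m$-coefficients is in effect an unrolled, determinant-free version of the same mechanism; it has the advantage of treating mixed valuations uniformly (the paper needs a separate, somewhat fussy block-concatenation argument for that case), and you correctly arrange the elimination so that the differenced series $g_i - (a_{i,m}/a_{1,m})\,g_1$ do not depend on $\alpha$, which is what lets a single ``good'' locus of coefficients serve all $\alpha$ simultaneously. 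The one step you should tighten is the genericity bookkeeping in the recursion: to invoke the inductive hypothesis you need the whole transformed tuple $(h_2,\dots,h_r,g_{r+1},\dots,g_N)$ to be generic in the sense of the lemma, not merely to have nonvanishing leading coefficients, since deeper levels of the recursion impose nonvanishing conditions on further polynomial expressions in the differenced coefficients, whereas your closing remark certifies only the leading terms. This does hold, and for a reason worth stating explicitly: the new coefficients are linearly independent linear forms, with fixed ($\alpha$-independent) constants, in the algebraically independent original coefficients, and such forms are again algebraically independent; alternatively, note that only finitely many elimination patterns can occur, so the union of their bad loci is still a proper closed subset of coefficient space. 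With that point made precise, your proof is complete.
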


\begin{proof}
Suppose first that $v_t(g_i)=\mu$ for every $i=1,\dots,N$; say $g_i(t)= \sum_{n=0}^{\infty} a_{i,\mu+n} t^n$ with $a_{i,\mu} \neq 0$ for every $i=1,\dots,N$. For every integer $\ga>0$, we have
\begin{equation}\label{linear_sys_eq}
\begin{split}
v_t\bigg(\sum_{i=1}^N \al_i g_i\bigg) \geq \mu+\ga &\iff \sum_{i=1}^N \al_i [t^{\mu+j}]g_i=0 \text{ for every }j=0,\dots,\ga-1 \\
&\iff \sum_{i=1}^N \al_i a_{i,\mu+j}=0 \text{ for every }j=0,\dots,\ga-1.
\end{split}
\end{equation}
Taking $\ga=N$, this means that
\begin{equation}\label{lin_sys_eq_matrix_version}
\left(
\begin{array}{cccccc}
a_{1,\mu} & \cdots & \cdots & a_{N,\mu}\\
a_{1,\mu+1} & \cdots & \cdots & a_{N,\mu+1}\\
\cdots & \cdots & \cdots & \cdots \\
a_{1,\mu+N} & \cdots & \cdots & a_{N,\mu+N}
\end{array}
\right)
\left(
\begin{array}{cccccc}
\al_1 \\
\al_2 \\
\cdots \\
\cdots \\
\al_N
\end{array}
\right)
=
\left(
\begin{array}{cccccc}
0 \\
0 \\
\cdots \\
\cdots \\
0
\end{array}
\right).
\end{equation}
Genericity of the nonzero coefficients $a_{i,k}$ means that the $N \times N$ coefficient matrix on the left-hand side of the matrix equation \eqref{lin_sys_eq_matrix_version} is invertible, which in turn forces the scalar multiplier vector $[\al_1,\cdots,\al_N]^T$ to be zero, a contradiction. It follows immediately that the system \eqref{linear_sys_eq} is insoluble whenever $\ga \geq N$, and consequently $v_t(\sum_{i=1}^N \al_i g_i) \leq \mu+N-1$ as desired.

\medskip
\noindent Now assume more generally that
\[
\begin{split}
    v_t(g_1)&=\dots=v_t(g_{N_1})= \mu_1 \\    v_t(g_{N_1+1})&=\dots=v_t(g_{N_2})= \mu_2 \\
    &\cdots \\    v_t(g_{N_{\ga-1}+1})&=\dots=v_t(g_{N_{\ga}})= \mu_{\ga}
\end{split}
\]
in which $\mu_1<\mu_2<\cdots<\mu_{\ga}$ and $N_1<N_2<\cdots<N_{\ga}$ are positive integers with $N_{\ga}=N$. For each $i=1,\ldots, \ga$, let $L_i$ denote a linear combination of 
$g_{N_{i-1}+1}, \dots, g_{N_i}$, where we set $N_0=0$. Using our analysis of the preceding paragraph, we have $v_t(L_1) \leq \mu_1+N_1-1$ and $v_t(L_i) \leq \mu_i+N_i-N_{i-1}$ for every $i=2,\dots,\ga$. Moreover, in order for the maximum possible valuation of $\sum_{i=1}^{\ga} L_i$ to be achieved, we must have $\mu_2=\mu_1+N_1-1$ and $\mu_i= \mu_{i-1}+N_{i-1}-N_{i-2}$ for every $i=3,\dots,\ga$. 
We now claim that
$v_t(\sum_{i=1}^N L_i) \leq \mu_1+N-1$. Indeed, say $L_1= \sum_{j=1}^{N_1} \al_j g_j$, $L_i=\sum_{j=N_{i-1}+1}^{N_i} \al_j g_j$ for every $j=2,\dots,\ga$, and that $\mu_2=\mu_1+N_1-1$ and $\mu_i= \mu_{i-1}+N_{i-1}-N_{i-2}$ for every $i=3,\dots,\ga$. We then have 
$v_t(\sum_{i=1}^\ga L_i) \geq \mu_1+N$ if and only if 
\begin{equation}\label{lin_sys_eq_general}
\begin{split}
\sum_{i=1}^{N_1} \al_i a_{i,\mu_1+j}&=0 \text{ for }j=0,\dots,N_1-1 \\
\sum_{i=1}^{N_2} \al_i a_{i,\mu_2+j}&=0 \text{ for }j=0,\dots,N_2-N_1-1 \\
&\cdots\\
\sum_{i=1}^{N_{\ga}} \al_i a_{i,\mu_{\ga}+j}&=0 \text{ for }j=0,\dots,N_{\ga}-N_{\ga-1}-1.
\end{split}
\end{equation}
The linear system of $N$ equations \eqref{lin_sys_eq_general} is a concatenation of the systems \eqref{linear_sys_eq}, and it is equivalent to a generalized version of the matrix equation \eqref{lin_sys_eq_matrix_version} whose associated coefficient matrix is a concatenation of the coefficient matrices in \eqref{lin_sys_eq_matrix_version}.
Genericity of the nonzero coefficients $a_{i,k}$ ensures that the coefficient matrix is invertible; it follows that the linear system of $N$ equations \eqref{lin_sys_eq_general} is insoluble, and therefore that $v_t(\sum_{i=1}^\ga L_i) \leq \mu_1+N-1$ as desired. 
\end{proof}

\noindent Given a polynomial $G(x_1,\ldots,x_n)$, we let $G_k$ denote its $k$\textsuperscript{th} graded piece. 
\begin{lemma}\label{valuation_bound_bis}
Let $\ell \geq 1$ be a positive integer. Suppose $f_1, f_2, f_3$ are generic power series in $t$ with $v_t(f_1) = 2\ell, v_t(f_2) = 2\ell+2,$ and $v_t(f_3) = 2\ell+4$.  Suppose $G(f_1,f_2,f_3)$ is a nonzero polynomial, and let $k$ be the minimum nonnegative integer for which the associated homogeneous graded piece $G_k$ is nonzero. Then, assuming that $4k+ \binom{k+2}{2}<2\ell$, we have
\[
k(2\ell) \leq v_t(G(f_1,f_2,f_3)) \le k(2\ell+4) + \binom{k+2}{2}-1.
\]
\end{lemma}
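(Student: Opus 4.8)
The plan is to handle the two bounds separately, reducing each to an analysis of the lowest graded piece $G_k$ by exploiting the hypothesis $4k+\binom{k+2}{2}<2\ell$ to decouple $G_k$ from the higher-degree pieces.

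The lower bound requires no genericity. Every monomial $f_1^a f_2^b f_3^c$ occurring in $G$ has degree $a+b+c\geq k$, and its valuation is $(a+b+c)(2\ell)+2b+4c\geq k(2\ell)$. Since the $t$-adic valuation of a sum is at least the minimum of the valuations of its summands, we get $v_t(G)\geq k(2\ell)$ directly.

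For the upper bound I would first isolate $G_k$. A monomial of degree $j>k$ has valuation at least $j(2\ell)\geq(k+1)(2\ell)=k(2\ell)+2\ell$, while the hypothesis gives $k(2\ell+4)+\binom{k+2}{2}-1<(k+1)(2\ell)$. Hence the pieces $G_{k+1},G_{k+2},\dots$ contribute only to $t$-degrees strictly above the target range, so the coefficient of $G$ at any degree $\leq k(2\ell+4)+\binom{k+2}{2}-1$ agrees with that of $G_k$; no cancellation from higher pieces can occur at the leading $t$-degree of $G_k$, and therefore $v_t(G)=v_t(G_k)$. To bound the latter, write $G_k=\sum_{a+b+c=k}\alpha_{abc}\,f_1^a f_2^b f_3^c$ and apply the grouped form of Lemma~\ref{valuation_bound} to the $N\leq\binom{k+2}{2}$ monomials actually present. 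Their valuations $k(2\ell)+2b+4c$ all lie in $[k(2\ell),k(2\ell+4)]$, so the minimum valuation $\mu_1$ among present monomials satisfies $\mu_1\leq k(2\ell+4)$, and the lemma yields $v_t(G_k)\leq\mu_1+N-1\leq k(2\ell+4)+\binom{k+2}{2}-1$, which is exactly the claimed bound.

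The main obstacle is justifying the use of Lemma~\ref{valuation_bound}: it is stated for genuinely generic power series, whereas the monomials $f_1^a f_2^b f_3^c$ are products of the three series $f_i$ and are thus algebraically dependent. What must be verified is that the coefficient matrix recording the coefficients of these monomials across the $t$-degrees from $\mu_1$ to $\mu_1+N-1$ remains invertible. Its determinant is a polynomial in the coefficients $\{a_{i,j}\}$ of the $f_i$, so it suffices to show this polynomial is not identically zero. I expect this to follow from a leading-term analysis: the leading coefficient of $f_1^a f_2^b f_3^c$ is the monomial $a_{1,2\ell}^a a_{2,2\ell+2}^b a_{3,2\ell+4}^c$ in the free leading parameters, while the higher coefficients depend, in a triangular and independent fashion, on the free higher-order parameters $a_{i,v_t(f_i)+s}$. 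The bound $4k+\binom{k+2}{2}<2\ell$ guarantees that only the first fewer-than-$2\ell$ coefficients of each $f_i$ enter the matrix, so these parameters are genuinely free and a suitable specialization makes the determinant nonzero. This genericity verification is the crux of the argument; the remaining valuation bookkeeping is routine.
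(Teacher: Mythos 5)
Your proposal has the same architecture as the paper's proof: the lower bound via subadditivity of $v_t$, the use of the hypothesis $4k+\binom{k+2}{2}<2\ell$ to decouple $G_k$ from the higher graded pieces (the paper leaves this implicit in the lemma and only spells it out in Corollary~\ref{cor_val1}; making it explicit is a small improvement), and the reduction of the upper bound to the claim that a combination of $N$ distinct degree-$k$ monomials in $f_1,f_2,f_3$ has valuation at most $N-1$ above the minimum of their valuations. You also correctly see that Lemma~\ref{valuation_bound} cannot simply be cited, because the monomials $f_1^af_2^bf_3^c$ are algebraically dependent, so that everything hinges on the nonvanishing of $\det\bigl([t^{\mu+s}]m_i\bigr)_{0\le s\le N-1,\,1\le i\le N}$ as a polynomial in the coefficients of the $f_j$. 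But you do not prove that nonvanishing; you only state that you ``expect'' it to follow from a leading-term analysis. That determinant statement \emph{is} the lemma --- the rest is bookkeeping --- so this is a genuine gap, not a deferred routine verification.

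Worse, the route you sketch provably fails. Monomials of the same degree $k$ and the same valuation $\mu$ have exponent vectors $(a,b,c)$ lying on the affine line $\{a+b+c=k\}\cap\{2b+4c=\mu-2k\ell\}$, so any three or more of them are linearly dependent; for instance $f_2^4$, $f_1f_2^2f_3$, $f_1^2f_3^2$ all have valuation $8\ell+8$, and $(0,4,0)+(2,0,2)=2\,(1,2,1)$. Consequently the matrix built from the constant and first-order parts of the entries $[t^{\mu+s}]m_i$ (in the higher coefficients of the $f_j$) inherits the same column dependence and has identically vanishing determinant whenever $N\ge 3$: the true leading term of the determinant is genuinely nonlinear in the coefficients, and no triangular or ``free leading parameter'' bookkeeping can detect it. (Incidentally, the paper's own justification --- that $\det(M_N)$ contains a non-cancellable contribution from the linearized matrix $\widetilde{M_N}$ of \eqref{generic_matrix_bis}, whose coefficients $b^{i,j}_{\ell}$ are treated as generic independently of $i$ even though they coincide for all $i$ --- stumbles at exactly this point; so you identified the right crux, but neither your sketch nor a literal reading of the paper resolves it.)

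A specialization that does close the gap: take $f_j=t^{v_j}e^{c_jt}$ with $(c_1,c_2,c_3)$ generic. Then $m_i=t^{\mu}e^{\lambda_i t}$ with $\lambda_i=\sum_j k_{i,j}c_j$, the matrix becomes $\bigl(\lambda_i^{s}/s!\bigr)_{s,i}$, and its determinant equals $\prod_{s=0}^{N-1}\frac{1}{s!}\cdot\prod_{i<i'}(\lambda_{i'}-\lambda_i)$, which is nonzero since distinct exponent vectors give distinct linear forms $\lambda_i$. As the determinant is a polynomial in the coefficients of the $f_j$ that is nonzero at one point, it is nonzero generically; the unequal-valuation (grouped) case then follows by the concatenation argument you already invoke from Lemma~\ref{valuation_bound}.
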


\begin{proof}
Write $f_i(t)= \sum_{n=0}^{\infty} a_{i,2\ell+2i-2+n} t^{2\ell+2i-2+n}$, $i=1,2,3$. The fact that $v_t(G(f_1,f_2,f_3)) \geq k(2\ell)$ follows immediately from the subadditivity of $v_t$, together with the fact that $f_1^k$ is the degree-$k$ monomial in $f_1$, $f_2$, and $f_3$ of minimal valuation $v_t=k(2\ell)$. To prove the upper bound, note that there are $\binom{k+2}{2}$ monomials of degree $k$ in $f_1$, $f_2$, and $f_3$, each of which has valuation at most $k(2\ell+4)$; so to show that $v_t(G(f_1,f_2,f_3)) \le k(2\ell+4) + \binom{k+2}{2}-1$
it suffices to show that the valuation of the sum of any $N$ monomials in the expansion of $G_k$ is at most $N-1$ more than the
minimum of their valuations. Assume first that the monomials $m_i=m_i(f_1,f_2,f_3)$, $i=1,\dots,N$ in question have equal valuations $v_t=\mu$. 
We will show that for every choice of $(\al_1,\dots,\al_N) \in \mb{C}^N$ with $\al_i \neq 0$ for every $i=1,\dots,N$, we have $v_t(\sum_{i=1}^N \al_i m_i) \leq \mu+N-1$. Indeed, for every integer $\ga>0$, we have
\begin{equation}\label{sys_eq_vt}
v_t\bigg(\sum_{i=1}^N \al_i m_i\bigg) \geq \mu+\ga \iff \sum_{i=1}^N \al_i [t^{\mu+j}]m_i=0, \text{ for every }j=0,\dots,\ga-1.
\end{equation}
When $\ga=N$, the
$N \times N$ matrix $M_N$ with $(i,j)$-th entry $[t^{\mu+j}]m_i$, where $i=1,\dots,N$ and $j=0,\dots,N-1$, has nonzero determinant. Indeed, for every $i=1,\dots,N$, we have $m_i=t^{\mu} \cdot g_{i,1}^{k_{i,1}} \cdot g_{i,2}^{k_{i,2}} \cdot g_{i,3}^{k_{i,3}}$ for power series $g_{i,j}=\sum_{\ell=0}^{\infty} b^{i,j}_{\ell} t^{\ell}$, $j=1,2,3$ with generic coefficients $b^{i,j}_{\ell}$ in {\it every} nonnegative degree $\ell$, and suitable natural number exponents $k_{i,1}$, $k_{i,2}$, $k_{i,3}$. Via the chain rule and multilinearity of the determinant, it is easy to see that $\det(M_N)$ includes a non-cancellable contribution from the determinant of
\begin{equation}\label{generic_matrix_bis}
\wt{M_N}=\left(
\begin{array}{cccccc}
\sum_{j=1}^3 k_{1,j} \wt{b}^{1,j}_0 & \cdots & \sum_{j=1}^3 k_{N,j} \wt{b}^{N,j}_0\\
\sum_{j=1}^3 k_{1,j} \wt{b}^{1,j}_1 & \cdots & \sum_{j=1}^3 k_{N,j} \wt{b}^{N,j}_1 \\
\cdots & \cdots & \cdots \\ \sum_{j=1}^3 k_{1,j} \wt{b}^{1,j}_{N-1} & \cdots & \sum_{j=1}^3 k_{N,j} \wt{b}^{N,j}_{N-1}
\end{array}
\right)
\end{equation} 
where $\wt{b}^{i,j}_{\ell}=\ell! \, b^{i,j}_{\ell}$. Genericity of the
coefficients $b^{i,j}_{\ell}$, in turn, ensures that $\wt{M_N}$ has nonzero determinant; and thus
   whenever $\ga \geq N$, the solution space of the system of equations on the right-hand side of \eqref{sys_eq_vt} is empty, i.e., $v_t(\sum_{i=1}^N \al_i m_i) \leq \mu+N-1$.

\medskip
\noindent Finally, assume more generally that
\[
\begin{split}
    v_t(m_1)&=\dots=v_t(m_{N_1})= \mu_1 \\    v_t(m_{N_1+1})&=\dots=v_t(m_{N_2})= \mu_2 \\
    &\cdots \\    v_t(m_{N_{\ga-1}+1})&=\dots=v_t(m_{N_{\ga}})= \mu_{\ga}
\end{split}
\]
in which $\mu_1<\mu_2<\cdots<\mu_{\ga}$ and $N_1<N_2<\cdots<N_{\ga}$ are positive integers with $N_{\ga}=N$. For each $i=1,\ldots, \ga$, let $L_i$ denote a linear combination 
of monomials $m_{N_{i-1}+1}, \dots, m_{N_i}$, where we set $N_0=0$. Using our analysis of the preceding paragraph, we have $v_t(L_1) \leq \mu_1+N_1-1$ and $v_t(L_i) \leq \mu_i+N_i-N_{i-1}$ for every $i=2,\dots,\ga$. Moreover, in order for the maximal valuation of $\sum_{i=1}^{\ga} L_i$ to be achieved, we must have $\mu_2=\mu_1+N_1-1$ and $\mu_i= \mu_{i-1}+N_{i-1}-N_{i-2}$ for every $i=3,\dots,\ga$.
 We now claim that 
$v_t(\sum_{i=1}^N L_i) \leq \mu_1+N-1$. Indeed, assume $L_1= \sum_{j=1}^{N_1} \al_j m_j$, $L_i=\sum_{j=N_{i-1}+1}^{N_i} \al_j m_j$ for every $j=2,\dots,\ga$, and that $\mu_2=\mu_1+N_1-1$ and $\mu_i= \mu_{i-1}+N_{i-1}-N_{i-2}$ for every $i=3,\dots,\ga$. Generalizing \eqref{sys_eq_vt}, we have 
$v_t(\sum_{i=1}^j L_i) \geq \mu_1+N$ if and only if
\begin{equation}\label{sys_eq_vt_general}
\begin{split}
\sum_{i=1}^{N_1} \al_i [t^{\mu_1+j}]m_i&=0 \text{ for }j=0,\dots,N_1-1 \\
\sum_{i=1}^{N_2} \al_i [t^{\mu_2+j}]m_i&=0 \text{ for }j=0,\dots,N_2-N_1-1 \\
&\cdots\\
\sum_{i=1}^{N_{\ga}} \al_i [t^{\mu_{\ga}+j}]m_i&=0 \text{ for }j=0,\dots,N_{\ga}-N_{\ga-1}-1.
\end{split}
\end{equation}
The coefficient matrix associated with the system of $N$ equations \eqref{sys_eq_vt_general}
is a concatenation of coefficient matrices of the form \eqref{generic_matrix_bis}, and as such
is of maximal rank. 
It follows that the system \eqref{sys_eq_vt_general} is insoluble, and therefore $v_t(\sum_{i=1}^N L_i) \leq \mu_1+N-1$. 
\end{proof}

\begin{coro}\label{cor_val1}
Whenever
$4d+\binom{d+2}{2} \le 2\ell$, there is no polynomial $G(f_1, f_2, f_3)$ with valuation satisfying
\[
d(2\ell+4)+\binom{d+2}{2} \le v_t(G(f_1, f_2, f_3))\le (d+1)2\ell-1.
\]
\end{coro}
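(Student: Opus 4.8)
The plan is to read Corollary~\ref{cor_val1} as a statement about how the valuation windows furnished by Lemma~\ref{valuation_bound_bis} tile the integers. For a nonzero polynomial $G=G(f_1,f_2,f_3)$ whose minimal nonzero graded piece has degree $k$, Lemma~\ref{valuation_bound_bis} confines $v_t(G)$ to the interval
\[
I_k := \Big[\,k(2\ell),\; k(2\ell+4)+\binom{k+2}{2}-1\,\Big],
\]
where the lower endpoint comes from subadditivity of $v_t$ alone (the monomial $f_1^k$ realizes the minimal valuation $k(2\ell)$ among degree-$k$ monomials), while the upper endpoint invokes the genericity hypothesis $4k+\binom{k+2}{2}<2\ell$. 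The forbidden range in the corollary is exactly $F_d := \big[\,d(2\ell+4)+\binom{d+2}{2},\,(d+1)(2\ell)-1\,\big]$, and a direct endpoint computation shows that $F_d$ is sandwiched precisely between $I_d$ and $I_{d+1}$: its lower endpoint is one more than the upper endpoint of $I_d$, and its upper endpoint $(d+1)(2\ell)-1$ is one less than the lower endpoint $(d+1)(2\ell)$ of $I_{d+1}$. Thus it suffices to show $v_t(G)\notin F_d$ for every nonzero $G$, which I would do by splitting on the minimal degree $k$ of $G$.

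First I would dispose of the case $k\geq d+1$, which needs no genericity at all: by subadditivity $v_t(G)\geq k(2\ell)\geq (d+1)(2\ell)$, which already exceeds the upper endpoint $(d+1)(2\ell)-1$ of $F_d$, so $v_t(G)$ lies strictly above $F_d$. (This matters because for such large $k$ the hypothesis of Lemma~\ref{valuation_bound_bis} may fail, so I deliberately use only the subadditive lower bound here.) For the complementary case $k\leq d$, I would invoke the upper bound of Lemma~\ref{valuation_bound_bis} at degree $k$, giving $v_t(G)\leq k(2\ell+4)+\binom{k+2}{2}-1$; since this expression is monotonically increasing in $k$, it is at most $d(2\ell+4)+\binom{d+2}{2}-1$, one less than the lower endpoint of $F_d$. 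Hence $v_t(G)$ lies strictly below $F_d$. Combining the two cases yields the corollary.

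The one point requiring care is the applicability of Lemma~\ref{valuation_bound_bis} across all degrees $k\leq d$, since the lemma demands the \emph{strict} inequality $4k+\binom{k+2}{2}<2\ell$ whereas the corollary only assumes $4d+\binom{d+2}{2}\leq 2\ell$. For $k<d$ strictness is automatic by monotonicity, so the only borderline degree is $k=d$ in the equality case $4d+\binom{d+2}{2}=2\ell$. I expect this to be the main (and only) obstacle, and I would resolve it by observing that it is vacuous: the interval $F_d$ is non-empty precisely when $d(2\ell+4)+\binom{d+2}{2}\leq (d+1)(2\ell)-1$, i.e.\ when $4d+\binom{d+2}{2}\leq 2\ell-1$, which already forces $4d+\binom{d+2}{2}<2\ell$. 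Consequently, whenever there is anything to prove (that is, whenever $F_d\neq\varnothing$) the hypothesis of Lemma~\ref{valuation_bound_bis} holds strictly at every degree $k\leq d$, while the equality case leaves $F_d$ empty and the statement trivially true. This closes the argument.
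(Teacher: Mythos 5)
Your proof is correct and follows essentially the same route as the paper's: split on the minimal nonzero degree $k$, use subadditivity alone when $k \ge d+1$, and invoke the upper bound of Lemma~\ref{valuation_bound_bis} when $k \le d$ to place $v_t(G)$ strictly below the forbidden interval. Your additional observation that the equality case $4d+\binom{d+2}{2}=2\ell$ renders the interval empty --- thereby reconciling the corollary's non-strict hypothesis with the lemma's strict one --- is a point the paper's proof passes over silently, and it is a worthwhile refinement.
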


\begin{proof}
Suppose that $G_0 = G_1 = \cdots = G_{k-1} = 0$ but $G_k \neq 0$.  Then $v_t(G) \ge k(2\ell)$.  If $k \ge d+1$, then clearly $v_t(G) \not\in [d(2\ell+4)+\binom{d+2}{2},(d+1)2\ell-1]$.  So suppose that $k \le d$. Lemma \ref{valuation_bound_bis} then implies that
$v_t(G_k) \le k (2\ell+4) + \binom{k+2}{2}$.
On the other hand, we have $G = G_k + \sum_{i \ge k+1} G_i$, while 
$v_t\left(\sum_{i \ge k+1} G_i \right) \ge (k+1) 2\ell$.
It follows immediately that 
$v_t(G) \in \left[k(2\ell), k  (2\ell+4)+\binom{k+2}{2}-1 \right]$.

\end{proof}

\begin{coro}\label{cor_val2}
Suppose ${\bf r} = (2\ell,2\ell+2,2\ell+4)$ for some $\ell \in \mb{N}_{>0}$.
The generic semigroup ${\rm S}(\bf{r})$ adapted to $\bf{r}$ has at least $2\ell-\left(4d+\binom{d+2}{2}\right)$ gaps in $[2d\ell, (d+1)2\ell]$.
\end{coro}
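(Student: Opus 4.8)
The plan is to read the conclusion off almost directly from Corollary~\ref{cor_val1}, treating everything else as bookkeeping. First I would dispose of the degenerate case: if $2\ell < 4d + \binom{d+2}{2}$, then the asserted lower bound $2\ell - \left(4d + \binom{d+2}{2}\right)$ is negative, so ``at least this many gaps'' holds vacuously. Hence I may assume $4d + \binom{d+2}{2} \le 2\ell$, which is precisely the hypothesis under which Corollary~\ref{cor_val1} is applicable.

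Under this hypothesis, Corollary~\ref{cor_val1} asserts that no polynomial $G(f_1,f_2,f_3)$ has $t$-adic valuation lying in the interval
\[
I = \left[\, d(2\ell+4) + \binom{d+2}{2},\ (d+1)2\ell - 1 \,\right].
\]
Since $S({\bf r})$ is by definition the set of valuations realized by the algebra generated by $f_1,f_2,f_3$, every integer in $I$ is a gap of $S({\bf r})$. It therefore suffices to count the integers in $I$ and to verify the inclusion $I \subseteq [2d\ell, (d+1)2\ell]$.

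For the count, rewriting the endpoints as $d(2\ell+4) + \binom{d+2}{2} = 2d\ell + 4d + \binom{d+2}{2}$ and $(d+1)2\ell - 1 = 2d\ell + 2\ell - 1$ shows that $I$ contains exactly
\[
(2d\ell + 2\ell - 1) - \left(2d\ell + 4d + \binom{d+2}{2}\right) + 1 = 2\ell - \left(4d + \binom{d+2}{2}\right)
\]
integers. For the inclusion, the lower endpoint $2d\ell + 4d + \binom{d+2}{2}$ exceeds $2d\ell$ (as $4d + \binom{d+2}{2} \ge 1$), while the upper endpoint $2d\ell + 2\ell - 1$ lies strictly below $(d+1)2\ell = 2d\ell + 2\ell$; hence $I \subseteq [2d\ell, (d+1)2\ell]$, which finishes the argument.

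I expect no genuine obstacle here: all of the analytic content---the existence of a forbidden valuation window---is already packaged in Corollary~\ref{cor_val1} (and ultimately in Lemma~\ref{valuation_bound_bis}), so the remaining work is purely the arithmetic of rewriting the interval and counting the lattice points it contains. The only point deserving the slightest care is confirming that the hypothesis $4d + \binom{d+2}{2} \le 2\ell$ of Corollary~\ref{cor_val1} coincides exactly with the regime in which the claimed bound is nonnegative, which is what makes the vacuous-case reduction clean.
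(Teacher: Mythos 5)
Your proof is correct and takes essentially the same approach as the paper: both apply Corollary~\ref{cor_val1} and then count the integers in the excluded interval $\left[d(2\ell+4)+\binom{d+2}{2},\,(d+1)2\ell-1\right]$. The only differences are cosmetic---the paper absorbs your vacuous case into a $\max\{0,\cdot\}$ in the count and leaves the containment in $[2d\ell,(d+1)2\ell]$ implicit, both of which you spell out explicitly.
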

\begin{proof}
This follows directly from Corollary \ref{cor_val1} and the fact that $[d(2\ell+4)+\binom{d+2}{2},(d+1)2\ell-1]$ contains 
\[
\max\left\{0,
\left((d+1)2\ell-1\right) - \left(d(2\ell+4)+\binom{d+2}{2}\right)+1\right\}
\]
integers.
\end{proof}

\begin{coro}\label{genus_lb_generic_semigroup}
For any $k$, we have
$g\left({\rm S}(\bf{r})\right) \ge 2\ell(k+1) - 2k(k+1) - \binom{k+3}{3}$.
In particular, for any $\epsilon > 0$ and all sufficiently large $\ell$ we have
\[
g\left({\rm S}(\bf{r})\right) > \left(\frac{8}{3}-\epsilon\right) \ell^{3/2}.
\]
\end{coro}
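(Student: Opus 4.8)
The plan is to obtain the genus bound by summing the gap counts furnished by Corollary~\ref{cor_val2} over a range of values of the counting parameter, after checking that the corresponding intervals of gaps are pairwise disjoint. Concretely, for each integer $d \ge 0$ satisfying $4d + \binom{d+2}{2} \le 2\ell$, Corollary~\ref{cor_val1} shows that every integer in the interval $I_d := [\,d(2\ell+4) + \binom{d+2}{2},\, (d+1)2\ell - 1\,]$ fails to be a valuation of any polynomial $G(f_1,f_2,f_3)$, hence is a gap of ${\rm S}(\mathbf{r})$; and $I_d$ contains exactly $2\ell - 4d - \binom{d+2}{2} \ge 0$ integers.

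First I would verify disjointness: comparing $\min I_{d+1} = (d+1)(2\ell+4) + \binom{d+3}{2}$ with $\max I_d = (d+1)2\ell - 1$ shows $\min I_{d+1} - \max I_d = 4(d+1) + \binom{d+3}{2} + 1 > 0$, so the $I_d$ are strictly disjoint and their gap counts genuinely add. Letting $K$ be the largest $d$ with $4d + \binom{d+2}{2} \le 2\ell$, summing over $0 \le d \le K$ gives $g({\rm S}(\mathbf{r})) \ge \sum_{d=0}^K \bigl(2\ell - 4d - \binom{d+2}{2}\bigr)$. Since every term with $d > K$ is negative while every term with $d \le K$ is nonnegative, for an arbitrary $k$ one has $\sum_{d=0}^k \bigl(2\ell - 4d - \binom{d+2}{2}\bigr) \le \sum_{d=0}^K \bigl(2\ell - 4d - \binom{d+2}{2}\bigr) \le g({\rm S}(\mathbf{r}))$; this is precisely why the first displayed inequality holds for every $k$, regardless of whether the hypothesis of Corollary~\ref{cor_val1} is satisfied at $d=k$. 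Evaluating the sum via $\sum_{d=0}^k \binom{d+2}{2} = \binom{k+3}{3}$ (hockey stick) and $\sum_{d=0}^k 4d = 2k(k+1)$ yields $2\ell(k+1) - 2k(k+1) - \binom{k+3}{3}$, as claimed.

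For the asymptotic statement I would optimize the bound $B(k) := 2\ell(k+1) - 2k(k+1) - \binom{k+3}{3}$ over $k$. Writing $k = c\sqrt{\ell}$ and using $\binom{k+3}{3} \sim k^3/6$, the leading behavior is $B(k) \sim \bigl(2c - c^3/6\bigr)\,\ell^{3/2}$, the term $2k(k+1) \sim 2c^2\ell$ being of strictly lower order. The function $2c - c^3/6$ is maximized at $c=2$, with value $8/3$. Thus I would set $k = \lfloor 2\sqrt{\ell}\,\rfloor$ and expand: $2\ell(k+1) \sim 4\ell^{3/2}$, $\binom{k+3}{3} \sim \tfrac{4}{3}\ell^{3/2}$, and $2k(k+1) \sim 8\ell$, giving $B(k) = \tfrac{8}{3}\ell^{3/2} - O(\ell)$. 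Since the error is $o(\ell^{3/2})$, for any $\epsilon > 0$ and all sufficiently large $\ell$ we obtain $g({\rm S}(\mathbf{r})) \ge B(\lfloor 2\sqrt{\ell}\,\rfloor) > (\tfrac{8}{3} - \epsilon)\ell^{3/2}$.

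The main obstacle worth flagging is that the optimal choice $k \approx 2\sqrt{\ell}$ lies exactly at the threshold $4k + \binom{k+2}{2} \approx 2\ell$ where the hypothesis of Corollary~\ref{cor_val1} becomes tight --- indeed $\binom{k+2}{2} \sim k^2/2 \sim 2\ell$ there --- so one cannot naively invoke a gap count at $d=k$. The resolution, already built into the argument above, is that the inequality $g({\rm S}(\mathbf{r})) \ge B(k)$ holds unconditionally in $k$, because truncating the disjoint-interval sum at $K$ and appending the negative tail only decreases the right-hand side; all genuine gaps are accounted for by $d \le K$, and it is fortunate that $K$ and the unconstrained optimizer are both asymptotic to $2\sqrt{\ell}$, so no loss in the leading constant $8/3$ is incurred.
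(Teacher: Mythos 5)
Your proof is correct and follows essentially the same route as the paper's: sum the gap counts of Corollary~\ref{cor_val2} over $d=0,\dots,k$, evaluate via $\sum_{d=0}^k \binom{d+2}{2}=\binom{k+3}{3}$, and take $k=\lfloor 2\sqrt{\ell}\rfloor$ for the asymptotic statement. The details you make explicit --- disjointness of the intervals $I_d$ and the fact that negative terms (where the hypothesis of Corollary~\ref{cor_val1} fails near the threshold) only weaken the bound, so the inequality holds for arbitrary $k$ --- are left implicit in the paper but are exactly the right justifications.
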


\begin{proof}
Summing over the expression in Corollary \ref{cor_val2} from $d=0$ to $d=k$ shows that
\[
g\left({\rm S}(\bf{r})\right) \ge \sum_{d=0}^k \left(2\ell-\left(4d+\binom{d+2}{2}\right)\right).
\]
Noting that $\sum_{d=0}^k \binom{d+2}{2} = \binom{k+3}{3}$ completes the proof of the first inequality.
To prove the second inequality, we try to maximize the expression on the right-hand side of the statement of the corollary.  Accordingly, let $k =\lfloor2\sqrt{\ell}\rfloor$.  For all $\ell \gg 0$, we then have
\[
2\ell(k+1) \ge (4-\epsilon/3)\ell^{3/2}, 2k(k+1) \le (8+\epsilon/3)\ell, \text{ and }
\binom{k+3}{3} \le \left(\frac{4}{3}+\epsilon/3 \right) \ell^{3/2}.
\]
Likewise, $(8+\epsilon/3)\ell < (\epsilon/3) \ell^{3/2}$ whenever $\ell$ is sufficiently large. Combining these expressions completes the proof.
\end{proof}
We could prove slightly sharper inequalities by more carefully maximizing the cubic in $k$ on the right-hand side of the first statement of the corollary, but the additional benefit does not seem to be worth the more complicated statement.

\medskip
Using precisely the same strategy, we obtain a version of Corollary~\ref{genus_lb_generic_semigroup} that applies to arbitrary ramification triples ${\bf r} = (m,m+a,m+b)$ with $m \geq 2$ and $0<a<b$, provided $b$ is not too large relative to $m$.

\begin{prop}\label{general_lower_genus_bound}
Given positive integers $0<a<b$ and $m \geq 2$, let ${\bf r} = (m,m+a,m+b)$ and let ${\rm S}(\bf{r})$ be the generic semigroup adapted to $\bf{r}$. 
Suppose $f_1, f_2, f_3$ are generic power series in $t$ with $v_t(f_1) = m, v_t(f_2) = m+a,$ and $v_t(f_3) = m+b$. 
\begin{enumerate}[leftmargin=*]
\item Whenever
$bd+\binom{d+2}{2} \le m$, there is no polynomial $G(f_1, f_2, f_3)$ with valuation satisfying
\[
d(m+b)+\binom{d+2}{2} \le v_t(G(f_1, f_2, f_3))\le (d+1)m.
\]
\item ${\rm S}(\bf{r})$ has at least $m-\left(bd+\binom{d+2}{2}\right)$ gaps in $[dm, (d+1)m]$.
\item For any $k$, we have
\[
g\left({\rm S}(\bf{r})\right) \ge
m(k+1) - b \binom{k+1}{2} - \binom{k+3}{3}.
\]
\end{enumerate}
\end{prop}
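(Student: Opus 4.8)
The plan is to climb the same three-rung ladder that produced Corollaries~\ref{cor_val1}, \ref{cor_val2}, and \ref{genus_lb_generic_semigroup}, replacing the gaps $(2,4)$ by $(a,b)$ throughout. The engine is a general-ramification analogue of Lemma~\ref{valuation_bound_bis}: if $G(f_1,f_2,f_3)$ is nonzero with minimal nonzero graded piece $G_k$ and $bk+\binom{k+2}{2}\le m$, then $km \le v_t(G) \le k(m+b)+\binom{k+2}{2}-1$. The lower bound is subadditivity of $v_t$ together with the observation that $f_1^k$ is the unique degree-$k$ monomial of minimal valuation $km$, since $v_t(f_1^i f_2^j f_3^l) = km + ja + lb$ and $0<a<b$. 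For the upper bound one notes that there are $\binom{k+2}{2}$ monomials of degree $k$, each of valuation at most $k(m+b)$ (attained by $f_3^k$), and applies verbatim the linear-algebra argument of Lemma~\ref{valuation_bound_bis}: the valuation of a sum of $N$ monomials of equal valuation exceeds that common valuation by at most $N-1$, because the associated generic coefficient matrix (the analogue of $\widetilde{M_N}$ in \eqref{generic_matrix_bis}) has nonvanishing determinant. The hypothesis $bk+\binom{k+2}{2}\le m$ is exactly what guarantees $k(m+b)+\binom{k+2}{2}-1 < (k+1)m$, so that the higher graded pieces (valuation $\ge (k+1)m$) cannot lower the valuation and $v_t(G)=v_t(G_k)$.

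For part (1) I would argue as in Corollary~\ref{cor_val1}. Writing $k$ for the least index with $G_k\ne0$: if $k\ge d+1$ then $v_t(G)\ge km\ge (d+1)m$, lying at or above the top of the interval; if $k\le d$, then $bk+\binom{k+2}{2}\le bd+\binom{d+2}{2}\le m$, so the generalized lemma applies and yields $v_t(G)\le k(m+b)+\binom{k+2}{2}-1$, which, since $x(m+b)+\binom{x+2}{2}$ is increasing in $x$, is at most $d(m+b)+\binom{d+2}{2}-1$, below the interval. Exactly as in Corollary~\ref{cor_val1}, the effective excluded interval is $[\,d(m+b)+\binom{d+2}{2},\,(d+1)m-1\,]$: the endpoint $(d+1)m$ is itself attained, by $f_1^{d+1}$, so it is genuinely in $\mathrm{S}(\mathbf r)$ and cannot be excluded.

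Since the value semigroup $\mathrm{S}(\mathbf r)$ consists precisely of the valuations $v_t(G(f_1,f_2,f_3))$, part (1) shows that every integer in $[\,d(m+b)+\binom{d+2}{2},\,(d+1)m-1\,]$ is a gap, and this interval contains $\max\{0,\, m-bd-\binom{d+2}{2}\}$ integers, giving part (2). For part (3) I would sum over $d=0,\dots,k$; the excluded intervals for distinct $d$ are pairwise disjoint, since the next one begins at $(d+1)(m+b)+\binom{d+3}{2}>(d+1)m$, so no gap is double-counted and
\[
g(\mathrm{S}(\mathbf r)) \ge \sum_{d=0}^k\Big(m - bd - \binom{d+2}{2}\Big) = m(k+1) - b\binom{k+1}{2} - \binom{k+3}{3},
\]
using $\sum_{d=0}^k d = \binom{k+1}{2}$ and the hockey-stick identity $\sum_{d=0}^k \binom{d+2}{2} = \binom{k+3}{3}$.

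The main obstacle is confirming that the determinant computation underlying Lemma~\ref{valuation_bound_bis} is insensitive to replacing $(2,4)$ by $(a,b)$. This is indeed the case: that argument uses only that each $f_i$ carries algebraically independent coefficients in every degree and the multiplicative/chain-rule structure of the monomials $m_i = t^{\mu} g_{i,1}^{k_{i,1}}g_{i,2}^{k_{i,2}}g_{i,3}^{k_{i,3}}$. The particular valuation gaps enter only through the bookkeeping of which monomials share a common valuation $\mu$ and the numerics of the separation condition; they play no role in the genericity argument forcing $\det\widetilde{M_N}\ne0$. Hence the proof of Lemma~\ref{valuation_bound_bis} carries over \emph{mutatis mutandis}, and the three parts follow as above.
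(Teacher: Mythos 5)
Your proposal is correct and follows essentially the same route the paper intends: the paper gives no separate proof of Proposition~\ref{general_lower_genus_bound}, saying only that it follows ``using precisely the same strategy'' as Lemma~\ref{valuation_bound_bis} and Corollaries~\ref{cor_val1}--\ref{genus_lb_generic_semigroup}, and your write-up is a faithful instantiation of that generalization with the valuation gaps $(2,4)$ replaced by $(a,b)$, including the correct observation that the genericity/determinant argument is insensitive to the particular gaps. Your further remark that the excluded interval should end at $(d+1)m-1$ (since $f_1^{d+1}$ realizes $(d+1)m$) is also right, and matches the formulation of Corollary~\ref{cor_val1}; the upper endpoint $(d+1)m$ in the stated part (1) is a harmless off-by-one.
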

In order to optimize the lower bound for $g\left({\rm S}(\bf{r})\right)$ in Proposition~\ref{general_lower_genus_bound}, one should choose $k \approx -b + \sqrt{b^2+m}$.

\begin{coro}\label{genus_lb_arith_progression_general_m}
Given positive integers $\ell$ and $m$, let ${\bf r}=(m\ell,m\ell+m,m\ell+2m)$ and let ${\rm S}({\bf r})$ be the
generic semigroup adapted to $\bf{r}$. For $\ell \gg m$, we have
\[
g({\rm S}({\bf r})) > \bigg(\frac{(2m)^{3/2}}{3}-\ep\bigg) \ell^{3/2}
\]
for every $\ep>0$.
\end{coro}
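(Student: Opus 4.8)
The plan is to specialize Proposition~\ref{general_lower_genus_bound} to the triple ${\bf r}=(m\ell, m\ell+m, m\ell+2m)$ and then optimize the resulting cubic lower bound over the free parameter $k$. I would apply the proposition with the roles of its parameters $m$, $a$, $b$ played by $m\ell$, $m$, $2m$ respectively; the hypotheses $0<a<b$ and $m\geq 2$ become $0<m<2m$ and $m\ell\geq 2$, which hold once $\ell\gg m$. Part (3), which holds for every nonnegative integer $k$, then gives
\[
g({\rm S}({\bf r})) \geq m\ell(k+1) - 2m\binom{k+1}{2} - \binom{k+3}{3}.
\]

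The crux is the choice of $k$. Viewing the right-hand side as a cubic in $k$ whose dominant terms are $m\ell\,k$ and $-\tfrac{1}{6}k^3$, the maximizer sits near the solution of $m\ell = \tfrac12 k^2$, i.e.\ $k\approx\sqrt{2m\ell}$; this is consistent with the case $m=2$ treated in Corollary~\ref{genus_lb_generic_semigroup}, where $\sqrt{2m\ell}=2\sqrt{\ell}$. I would therefore set $k=\lfloor\sqrt{2m\ell}\rfloor$, so that $k\leq\sqrt{2m\ell}<k+1$, and estimate the three terms separately, following the template of Corollary~\ref{genus_lb_generic_semigroup}. The first term obeys $m\ell(k+1)>m\ell\sqrt{2m\ell}=\tfrac{(2m)^{3/2}}{2}\ell^{3/2}$; the subtracted cubic obeys $\binom{k+3}{3}\leq\tfrac{1}{6}(2m\ell)^{3/2}+O(m\ell)=\tfrac{(2m)^{3/2}}{6}\ell^{3/2}+O(m\ell)$; and the middle term is genuinely of lower order, $2m\binom{k+1}{2}=mk(k+1)\leq 2m^2\ell+O(m^{3/2}\ell^{1/2})$. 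Since $\tfrac{(2m)^{3/2}}{2}-\tfrac{(2m)^{3/2}}{6}=\tfrac{(2m)^{3/2}}{3}$, collecting these yields
\[
g({\rm S}({\bf r})) \geq \frac{(2m)^{3/2}}{3}\ell^{3/2} - O(m^2\ell).
\]

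The one delicate point --- and the main, if modest, obstacle --- is controlling the $m$-dependence of the error, since the stated bound is meant to hold uniformly for $\ell\gg m$. The dominant error term is the $2m^2\ell$ arising from the middle term; absorbing it into $\ep\ell^{3/2}$ requires $m^2\ell^{-1/2}<\ep$, that is, $\ell$ exceeding a threshold of order $m^4/\ep^2$. This is exactly what the hypothesis $\ell\gg m$ should be taken to mean (a threshold depending on both $m$ and $\ep$), and once it is imposed the remaining corrections of order $m\ell$ and $m^{3/2}\ell^{1/2}$ are smaller still, so that $g({\rm S}({\bf r}))>\big(\tfrac{(2m)^{3/2}}{3}-\ep\big)\ell^{3/2}$ as claimed.
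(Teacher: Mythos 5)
Your proposal is correct and follows essentially the same route as the paper: the paper's proof is literally ``choose $k=\lfloor\sqrt{2m\ell}\rfloor$ and argue as in Corollary~\ref{genus_lb_generic_semigroup},'' i.e.\ specialize the cubic lower bound of Proposition~\ref{general_lower_genus_bound}(3) to $(m\ell,m,2m)$ and estimate the three terms, exactly as you do. Your added bookkeeping on the $m$-dependence of the error terms (reading $\ell\gg m$ as a threshold depending on $m$ and $\ep$) is a faithful elaboration of what the paper leaves implicit, not a different argument.
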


\begin{proof}
Choose $k=\lfloor \sqrt{2m \ell} \rfloor$, and argue as in the proof of Corollary~\ref{genus_lb_generic_semigroup}.
\end{proof}

\noindent Corollary~\ref{genus_lb_arith_progression_general_m} shows, in particular, that $g({\rm S}({\bf r}))$ is greater that $3m\ell+3m-6$ whenever $\ell \gg m$; as explained at the beginning of this section, it follows that $M^3_{d,g({\rm S}({\bf r}))}$ is reducible whenever $d \geq \max(2g({\rm S}({\bf r}))-2,n)$.

\begin{rem}\label{discrete_dynamical_system}
\emph{One might ask for {\it exact} structural results for generic semigroups ${\rm S}$ derived from ramification profiles $\bf{r}$, for example, in those cases where $\bf{r}$ is an arithmetic progression or a supersymmetric tuple. We conjecture that ${\rm S}$ may be obtained as a ``limit" $\lim_{i \ra \infty} {\rm S}^{(i)}$ of a discrete dynamical system, where ${\rm S}^{(0)}$ is the monoid minimally generated by the elements of ${\bf r}$ and ${\rm S}^{(i+1)}$ is the semigroup generated by ${\rm S}^{(i)}$ together with a number of additional insertions $\ell^b_1, \dots \ell^b_{N(b)-1}$ indexed by a certain subset of the \emph{Betti elements} $b \in B({\rm S}^{(i)},{\bf r})$; for a definition of these, see \cite[Sec. 2]{Ch}. Here $N(b)$ is the number of equivalence classes of factorizations of $b$ involving insertions to ${\rm S}^{(i-1)}$ and $\ell^b_i$ denotes the $i$-th gap of ${\rm S}^{(i)}$ greater than $b$, when these are ordered from smallest to largest.}\footnote{In this scheme, ${\rm S}^{(1)}$ by convention is the semigroup generated by the (monoid) generated by ${\bf r}$, together with insertions given by gaps greater than its Betti elements. In particular, when ${\bf r}$ is a triple in arithmetic progression, ${\rm S}^{(1)}$ becomes the semigroup ${\rm S}^*$ defined in the next subsection. 
}
\end{rem}

While it seems difficult to compute $\lim_{i \ra {\infty}} {\rm S}^{(i)}$ in general, we compute first-order approximations to these limits in the following subsection.

\subsection{First-order approximations and upper bounds on the genera of generic semigroups from arithmetic triples}

In this section we study value semigroups of generic cusps whose ramification profiles are triples that are in arithmetic progression.  We first consider the case where ${\bf r}=(2\ell,2\ell+2,2\ell+4)$ and then consider the more general case where ${\bf r}=(m\ell,m\ell+2,m\ell+4)$ for some $m \ge 2$.

\subsubsection{Case: $m=2$} 

\begin{thm}\label{m2_approx}
Let ${\bf r}=(2\ell,2\ell+2,2\ell+4)$, let $\rm{S}$ be the generic semigroup adapted to ${\bf r}$, and let
\[
\rm{S}^* := \begin{cases}
\langle 2\ell,2\ell+2,2\ell+4; 4\ell+5, (\ell+2)\ell+1 \rangle & \text{if } \ell \text{ is even}\\
\langle 2\ell,2\ell+2,2\ell+4; 4\ell+5, (\ell+3)\ell+1 \rangle& \text{if } \ell \text{ is odd}.
\end{cases}
\]
We have
(1) $\rm{S}^* \subseteq \rm{S}$; and (2) $g(\rm{S}) \le \left\lceil\frac{1}{2} \ell^2\right\rceil+2\ell$.
\end{thm}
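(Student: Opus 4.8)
The plan is to prove (1) by exhibiting, for each of the two extra generators, an explicit difference of two monomials in $f_1,f_2,f_3$ of equal $t$-adic valuation whose leading terms cancel, and then to deduce (2) from (1) by computing the genus of the completely explicit semigroup $\mathrm{S}^*$. For the generator $4\ell+5$, note that $f_1f_3$ and $f_2^2$ both have valuation $2\ell+(2\ell+4)=2(2\ell+2)=4\ell+4$; choosing $c\in\mb{C}^*$ to cancel the coefficient of $t^{4\ell+4}$ in $f_1f_3-cf_2^2$ produces a series of valuation $>4\ell+4$, while the key estimate in the proof of Lemma~\ref{valuation_bound_bis}, applied with $N=2$ (invertibility of the $2\times2$ matrix $\wt{M_2}$), forces any nonzero combination of two monomials to have valuation at most one more than their common valuation. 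Hence $v_t(f_1f_3-cf_2^2)=4\ell+5$ and $4\ell+5\in\mathrm{S}$. For the largest generator $\sigma$ (equal to $(\ell+2)\ell+1=(\ell+1)^2$ for even $\ell$ and $(\ell+3)\ell+1$ for odd $\ell$), I would locate two monomials of valuation $\sigma-1$: when $\ell$ is even, $f_1^{\ell/2+1}$ and $f_3^{\ell/2}$ both have valuation $\ell(\ell+2)$; when $\ell$ is odd, $f_1^{(\ell+3)/2}$ and $f_2f_3^{(\ell-1)/2}$ both have valuation $\ell(\ell+3)$. The same cancellation argument yields an element of valuation exactly $\sigma$, so $\sigma\in\mathrm{S}$. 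Together with the trivial memberships $2\ell,2\ell+2,2\ell+4\in\mathrm{S}$, this gives $\mathrm{S}^*\subseteq\mathrm{S}$, proving (1).

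For (2), since $\mathrm{S}^*\subseteq\mathrm{S}$ we have $g(\mathrm{S})\le g(\mathrm{S}^*)$, so it suffices to show $g(\mathrm{S}^*)=\lceil\ell^2/2\rceil+2\ell$. I would compute this through the Apéry set of $\mathrm{S}^*$ with respect to its multiplicity $m=2\ell$ and the formula $g(\mathrm{S}^*)=\frac{1}{2\ell}\sum_{w\in\Ap(\mathrm{S}^*,2\ell)}w-\frac{2\ell-1}{2}$, organizing the $2\ell$ residue classes into even and odd classes. For an even residue $2r$ with $0\le r\le\ell-1$, the cheapest element is built from the ramification generators alone: a degree-$d$ product of $2\ell,2\ell+2,2\ell+4$ has valuation $2\ell d+2(b+2c)$ with $b+2c\in\{0,\dots,2d\}$, and minimizing over $d$ subject to $b+2c\equiv r$ gives Apéry value $E(r):=2\ell\lceil r/2\rceil+2r$, attained at $b+2c=r$, $d=\lceil r/2\rceil$. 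These classes contribute $\sum_{r=0}^{\ell-1}\lceil r/2\rceil$ gaps, which I expect to evaluate cleanly by the parity of $\ell$.

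The crux is the odd residue classes, where every element must use an odd number of the odd generators $4\ell+5\equiv5$ and $\sigma\equiv1\pmod{2\ell}$. For a residue $2k+1$ I would compare the two one-odd-generator candidates $\sigma+E(k)$ and $(4\ell+5)+E(k-2)$, where $E(j)$ is the minimal even-residue element of class $2j$ from the previous paragraph; a short subtraction shows the second is smaller by a quantity of order $\ell^2$ whenever $k\ge2$, so the Apéry element is $(4\ell+5)+E(k-2)$ for $k\ge2$ and $\sigma+E(k)$ for $k\in\{0,1\}$. The main obstacle, and the step demanding the most care, is proving these candidates genuinely minimal: one must rule out representatives using three or more odd generators, and verify that odd generators never undercut $E(j)$ in an even class. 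I would dispatch these with direct size estimates — e.g.\ reaching an even class $2j$ via $2(4\ell+5)=8\ell+10$ costs $8\ell+10+E(j-5)$, which exceeds $E(j)$ by roughly $3\ell$ — so that the stated Apéry values are confirmed.

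Granting minimality, the proof concludes by summing the even and odd Apéry values and applying the genus formula. I expect the total $\frac{1}{2\ell}\sum_w w-\frac{2\ell-1}{2}$ to collapse to $\lceil\ell^2/2\rceil+2\ell$, with the ceiling and the split $k\in\{0,1\}$ versus $k\ge2$ accounting for the parity dependence of both $\sigma$ and the ceiling; I have checked the cases $\ell=2,3,4$ (where $g(\mathrm{S}^*)=6,11,16$) against the formula as a consistency test. This yields $g(\mathrm{S})\le g(\mathrm{S}^*)=\lceil\ell^2/2\rceil+2\ell$, completing (2).
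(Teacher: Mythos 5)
Your proposal is correct. Part (1) coincides with the paper's argument: the same three pairs of equal-valuation monomials ($f_1f_3$ against $f_2^2$; $f_1^{\ell/2+1}$ against $f_3^{\ell/2}$ for $\ell$ even; $f_1^{(\ell+3)/2}$ against $f_2f_3^{(\ell-1)/2}$ for $\ell$ odd), with genericity forcing each difference to have valuation exactly one above the common valuation. Part (2), however, takes a genuinely different route. The paper writes out the entire gap set of ${\rm S}^*$ as an explicit disjoint union (its sets ${\rm G}_0$ and ${\rm G}_1$) and counts it, with the verification left as ``straightforward to check''; you instead compute $\Ap({\rm S}^*;2\ell)$ and apply the genus formula $g({\rm S}^*)=\sum_i(e_i-i)/(2\ell)$. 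Your Ap\'ery data is correct: writing $\sigma$ for the large generator, the even class $2r$ has Ap\'ery element $E(r)=2\ell\lceil r/2\rceil+2r$, and the odd class $2k+1$ has Ap\'ery element $\sigma+E(k)$ for $k\in\{0,1\}$ and $(4\ell+5)+E(k-2)$ for $k\ge 2$ (indeed $\sigma+E(k)$ exceeds $(4\ell+5)+E(k-2)$ by exactly $\sigma-2\ell-1>0$). Summing, the even classes contribute $\ell^2/4$ (resp.\ $(\ell^2-1)/4$) gaps and the odd classes $\ell^2/4+2\ell$ (resp.\ $(\ell^2+8\ell+3)/4$) gaps for $\ell$ even (resp.\ odd), giving $\lceil\ell^2/2\rceil+2\ell$ in both parities, as you predicted. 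The step you leave as a sketch --- ruling out factorizations with two or more odd generators --- is routine and your estimates are of the right order: for instance $2(4\ell+5)+E(j-5)-E(j)$ equals $2\ell$ or $4\ell$ according to the parity of $j$, and anything involving $\sigma$ more than once is larger still. Worth noting: your method is exactly the one the paper itself adopts for the generalization to $m\ge 3$ (Proposition~\ref{Apery_S*}, the identity \eqref{gS*}, and the proof of Theorem~\ref{generic_semigroup_consecutive_multiples}(2)), so your argument scales directly to that setting, whereas the paper's explicit gap listing for $m=2$ does not; what the paper's approach buys in exchange is a completely explicit description of the gaps themselves rather than only their count.
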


\begin{proof}
Let $f_1(t)$ be a generic power series with $v_t(f_1) = 2\ell$, let $f_2(t)$ be a generic power series with $v_t(f_2) = 2\ell+2$, and let $f_3(t)$ be a generic power series with $v_t(f_3) = 2\ell+4$.  Suppose each power series has leading coefficient $1$.  In order to prove the first statement, note that because 
$f_1(t)f_3(t)$ and $f_2(t)^2$ are each of order $v_t=4\ell+4$, genericity of higher-order coefficients of the $f_i(t)$ guarantees that
$v_t\left(f_1(t)f_3(t)-f_2(t)^2\right) = 4\ell+5$.
Similarly, for $\ell$ even we have 
\[
v_t\left(f_1(t)^{\frac{\ell+2}{2}} - f_3(t)^{\frac{\ell}{2}}\right) = (\ell+2)\ell+1
\]
while for $\ell$ odd we have 
\[
v_t\left(f_1(t)^{\frac{\ell+3}{2}} - f_2(t) f_3(t)^{\frac{\ell-1}{2}}\right) = (\ell+3)\ell+1.
\]

For the second item, note that $\rm{S}^* \subseteq \rm{S}$ implies that $g(\rm{S}) \le g(\rm{S}^*)$.  We prove that $g(\rm{S}^*) \le \left\lceil\frac{1}{2} \ell^2\right\rceil+2\ell$ by explicitly computing the set of gaps of $\rm{S}^*$.  It is straightforward  to check that the set of gaps of $\langle 2\ell,2\ell+2,2\ell+4; 4\ell+5, (\ell+2)\ell+1 \rangle$ is given by 
{\small
\[
\begin{split}
{\rm G}_0&=\{1,\dots,2\ell-1\} \bigsqcup \{2\ell+1, 2\ell+3,\dots,4\ell+3\} \bigsqcup \bigsqcup_{i=1}^{\frac{\ell}{2}-1} 2\{i\ell+2i+1,\dots,(i+1)\ell-1\}\\
& \bigsqcup \bigsqcup_{i=1}^{\frac{\ell}{2}-2} \{2(i+1)\ell+2(2i+1)+1, 2(i+1)\ell+2(2i+1)+3, \dots, 2(i+2)\ell+3\} \\
& \bigsqcup \{\ell^2+ 2\ell-1, \ell^2+2\ell+3\}.
\end{split}
\]
}
Similarly, it is straightforward to check that the set of gaps of $\langle 2\ell,2\ell+2,2\ell+4; 4\ell+5, (\ell+3)\ell+1 \rangle$ is given by 
{\small
\[
\begin{split}
    {\rm G}_1 &= \{1,\dots,2\ell-1\} \bigsqcup \{2\ell+1, 2\ell+3,\dots,4\ell+3\} \bigsqcup \bigsqcup_{i=1}^{\frac{\ell-1}{2}-1} 2\{i\ell+2i+1,\dots,(i+1)\ell-1\}\\
    & \bigsqcup \bigsqcup_{i=1}^{\frac{\ell-1}{2}-1} \{2(i+1)\ell+2(2i+1)+1, 2(i+1)\ell+2(2i+1)+3, \dots, 2(i+2)\ell+3\}\\
   &  \bigsqcup \{\ell^2+3\ell+3\}.
\end{split}
\]
}
By counting these gaps, we see that $g(\rm{S}^*) = \left\lceil\frac{1}{2} \ell^2\right\rceil+2\ell$.
\end{proof}

We have just seen that $g(\rm{S}) \le \lceil \frac{1}{2}\ell^2 \rceil + 2\ell$.  In Corollary \ref{genus_lb_generic_semigroup}, we saw that for all sufficiently large $\ell$ we have $g(\rm{S})$ is at least a constant times $\ell^{3/2}$.  It seems likely that for large values of $\ell$ the lower bound is closer to the true genus than the upper bound, but we do not pursue this further here.

\subsection{Case: $m \geq 2$}
Easy adaptations of the arguments used to prove the first statement in Theorem~\ref{m2_approx}  
yield the following more general containment.

\begin{thm}\label{generic_semigroup_consecutive_multiples}
Fix positive integers $m \ge 2$ and $\ell \ge 2m$ and let ${\bf r}(m,\ell)=(m\ell,m\ell+m,m\ell+2m)$.  Let $\rm{S}$ be the generic semigroup adapted to ${\bf r}(m,\ell)$. If $\ell$ is even, let
\[
\rm{S}^* =
\langle m\ell,m\ell+m,m\ell+2m; 2m(\ell+1)+1, m\left(\frac{\ell}{2}+1\right)\ell+1 \rangle
\]
while if $\ell$ is odd, let
\[
\rm{S}^* =
\left\langle m\ell,m\ell+m,m\ell+2m; 2m(\ell+1)+1,\frac{m}{2}(\ell+1)(\ell+2)+1, \frac{m}{2}\ell(\ell+3)+1 \right\rangle.
\]
Then
\begin{enumerate}
\item  $\rm{S}^* \subseteq \rm{S}$.
\item Whenever $\ell$ is even, we have 
\[
g(\rm{S}) \le \frac{1}{4}m \ell^2+ m(m-1)\ell+ (m-1)(m-2).
\]
\item Whenever $\ell$ is odd, we have 
\[
g(\rm{S}) \le \frac{1}{4}m(\ell+1)(\ell-2)+ m(m-1)\ell+ (m-1)(m-2).
\]
\end{enumerate}
\end{thm}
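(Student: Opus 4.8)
The plan is to imitate the structure of the proof of Theorem~\ref{m2_approx}: first certify the containment ${\rm S}^* \subseteq {\rm S}$ by producing, for each additional generator, a binomial difference of monomials in generic $f_1,f_2,f_3$ whose leading terms cancel; and then deduce the genus bounds from the elementary inequality $g({\rm S}) \le g({\rm S}^*)$ by computing the gaps (equivalently, the Ap\'ery set) of the explicit semigroup ${\rm S}^*$.

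For part (1), I would fix generic $f_i(t)$ with $v_t(f_1)=m\ell$, $v_t(f_2)=m(\ell+1)$, $v_t(f_3)=m(\ell+2)$ and leading coefficient $1$. The generator $2m(\ell+1)+1$ is realized as $v_t(f_1 f_3 - f_2^2)$: both $f_1 f_3$ and $f_2^2$ have valuation $2m(\ell+1)$, the normalized leading terms cancel, and genericity of the next coefficient (equivalently, Lemma~\ref{valuation_bound} with $N=2$) pins the valuation at $2m(\ell+1)+1$. For $\ell$ even, $f_1^{(\ell+2)/2}$ and $f_3^{\ell/2}$ share the valuation $\tfrac12 m\ell(\ell+2)=m(\tfrac{\ell}{2}+1)\ell$, which yields the generator $m(\tfrac{\ell}{2}+1)\ell+1$. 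For $\ell$ odd, the pairs $\{f_1^{(\ell+1)/2}f_2,\, f_3^{(\ell+1)/2}\}$ and $\{f_1^{(\ell+3)/2},\, f_2 f_3^{(\ell-1)/2}\}$ have common valuations $\tfrac12 m(\ell+1)(\ell+2)$ and $\tfrac12 m\ell(\ell+3)$, respectively, producing the two remaining odd-case generators. In each case the only content is the elementary valuation bookkeeping that verifies the two monomials share a valuation, followed by the same genericity argument already deployed for $m=2$; this is the sense in which the containment is an ``easy adaptation.''

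For parts (2) and (3), the containment gives $g({\rm S}) \le g({\rm S}^*)$, so it suffices to compute $g({\rm S}^*)$. Mirroring the explicit determination of the gap sets $G_0,G_1$ in Theorem~\ref{m2_approx}, I would organize the gaps of ${\rm S}^*$ by residue class modulo the multiplicity $m\ell$; that is, I would compute the Ap\'ery set $\Ap({\rm S}^*,m\ell)$ (the content of Proposition~\ref{ApSprime}) and then apply Selmer's formula $g = \tfrac{1}{m\ell}\sum_{w \in \Ap({\rm S}^*,m\ell)} w - \tfrac{m\ell-1}{2}$. The three base generators by themselves generate $m\langle \ell,\ell+1,\ell+2\rangle$ and hence contribute only multiples of $m$; the crux is to track how the offset generators, all congruent to $1 \pmod m$, populate the residue classes not divisible by $m$, and thereby to identify the minimal representable element in each of the $m\ell$ classes. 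The hypothesis $\ell \ge 2m$ should be used precisely to guarantee that the resulting band structure has the expected (non-degenerate) form, so that the Ap\'ery computation stabilizes.

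The main obstacle is exactly this gap/Ap\'ery computation for general $m$. Whereas for $m=2$ there were only two residue families to follow, here one must keep track of all $m\ell$ classes, separately accounting for the small offset generator $2m(\ell+1)+1$ (of order $\ell$) and the large one(s) of order $\ell^2$, and carry the parity split through cleanly. Heuristically the leading term $\tfrac14 m\ell^2 = m\cdot g(\langle \ell,\ell+1,\ell+2\rangle)$ reflects that, once the offsets fill the non-multiple-of-$m$ classes, each residue class modulo $m$ contributes a gap count modeled on that of $\langle \ell,\ell+1,\ell+2\rangle$; the lower-order corrections $m(m-1)\ell+(m-1)(m-2)$ are the delicate terms that only a careful, fully explicit enumeration will produce, and getting these exactly right is where the real work lies.
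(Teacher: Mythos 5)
Your part (1) is correct and is exactly the paper's argument: the same binomial pairs ($f_1f_3-f_2^2$ for the generator $2m(\ell+1)+1$; $f_1^{(\ell+2)/2}-f_3^{\ell/2}$ in the even case; $f_1^{(\ell+1)/2}f_2-f_3^{(\ell+1)/2}$ and $f_1^{(\ell+3)/2}-f_2f_3^{(\ell-1)/2}$ in the odd case), with genericity pinning each valuation one above the common valuation of the pair. Your strategy for (2) and (3) — reduce to $g({\rm S})\le g({\rm S}^*)$, compute $\Ap({\rm S}^*;m\ell)$, and sum over residue classes (your Selmer formula is identical to the paper's formula \eqref{gS*}) — is also the paper's strategy.

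The genuine gap is that you never actually determine $\Ap({\rm S}^*;m\ell)$ or carry out the summation; you explicitly defer ``the real work'' and offer only the heuristic that the answer should look like $m$ copies of $g(\langle \ell,\ell+1,\ell+2\rangle)$ plus lower-order corrections. That deferred work is essentially the entire content of the paper's proof of (2) and (3). Concretely, the paper proceeds by a two-step filtration: Lemma~\ref{Ap1} first computes the Ap\'ery set of the intermediate semigroup ${\rm T}=\langle m\ell,\,m\ell+m,\,m\ell+2m,\,2m(\ell+1)+1\rangle$ obtained by adjoining only the small extra generator (the answer is a box of factorizations $x(m\ell+m)+y(m\ell+2m)+z(2m(\ell+1)+1)$ with $x\in\{0,1\}$, $y\le \frac{\ell}{2}-1$, $z\le m-1$); Lemma~\ref{ApSSprime} reduces membership in $\Ap({\rm S}^*;m\ell)$ to a condition back in ${\rm T}$ involving subtraction of multiples of the large generator; and Lemma~\ref{ApSprime} runs the inequality analysis — this is where the hypothesis $\ell\ge 2m$ is actually used, via sign estimates on quantities like $m\ell+2m-\frac{\ell^2m}{2}$, not merely to ``guarantee non-degenerate band structure'' — to identify exactly which Ap\'ery elements of ${\rm T}$ survive and which get replaced by elements involving the large generator. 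Only then do Propositions~\ref{Apery_S*} and \ref{Apery_S*bis} feed into the three-part summation $g_1+g_2+g_3$ yielding the stated polynomials. Your proposal contains no substitute for this filtration idea; citing Proposition~\ref{ApSprime} as ``the content'' to be established names the missing step rather than supplying it. As it stands, you have proved part (1) but only sketched a plan for parts (2) and (3).
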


We prove the first part of this theorem and return to the second part later in this section.
\begin{proof}[Proof of Theorem~\ref{generic_semigroup_consecutive_multiples} (1)]
As in the proof of Theorem~\ref{m2_approx}, let $f_1(t),f_2(t),f_3(t)$ be power series with $t$-valuations $m\ell, m \ell+m$, and $m\ell+2m$, respectively, and suppose that each $f_i$ has leading coefficient $1$.  Since $f_1(t) f_3(t)$ and $f_2(t)^2$ each have $t$-valuation $2(m\ell+m)$, 
genericity ensures that
$
v_t\left(f_1(t)f_3(t)-f_2(t)^2\right) = 2m(\ell+1)+1$.
Furthermore, 
we have
\[
\left(\frac{\ell}{2}+1\right) m\ell = \frac{\ell}{2}\left(m\ell+2m\right)
\]
whenever $\ell$ is even; while
\[
\left(\frac{\ell+1}{2} \right)(m\ell + 2m) = \left(\frac{\ell+1}{2} \right)(m\ell) + 1(m\ell+m)
\]
and 
\[
\left(\frac{\ell+3}{2}\right)m\ell =1(m\ell+m)+ \left( \frac{\ell-1}{2} \right)(m\ell+2m)
\]
whenever $\ell$ is odd.
These factorizations in $\langle m\ell , m\ell +m, m\ell  + 2m\rangle$ yield pairs of monomials 
in $f_1(t), f_2(t), f_3(t)$ with the same $t$-valuation. In each case the genericity of higher-order coefficients implies that the difference is a power series with $t$-valuation one larger.
 
\end{proof}

In order to complete our generalization of Theorem~\ref{m2_approx}
to the case where $m \geq 3$, we will write down the gap set ${\rm G}= \mb{N} \setminus {\rm S}^*$ explicitly.  For this purpose, it suffices to compute the {\it Ap\'ery set} of ${\rm S^*}$. 
The \emph{Ap\'ery set} of a numerical semigroup ${\rm S}$ with respect to an element $n \in {\rm S}$ is 
$\Ap({\rm S};n) = \{x \in {\rm S}\colon x-n \not\in {\rm S}\}$.
In general, $\Ap({\rm S};n)$ consists of $0$ together with $n-1$ positive integers, each in a distinct residue class modulo $n$.  In this paper, we will only consider the Ap\'ery set of a numerical semigroup with respect to its \emph{multiplicity}, its smallest nonzero element.  For $i \in \{1,2,\ldots, m\ell -1\}$, let $e_i$ denote the smallest $x \in \rm{S}^*$ with $x\equiv i \pmod{m\ell}$.  The number of gaps $x\in \rm{G}$ satisfying $x \equiv i \pmod{m\ell}$ is $\frac{e_i - i}{m\ell}$.  In this way, we see that 
\begin{equation}\label{gS*}
g(\rm{S}^*) = \sum_{i=1}^{m\ell-1} \frac{e_i-i}{m\ell}.
\end{equation}
The goal of the rest of this section is to prove the following characterization of $(e_1,\ldots, e_{m\ell-1})$.
\begin{prop}\label{Apery_S*}
Suppose $\ell$ is even and let ${\rm S}^*$ be defined as in Theorem \ref{generic_semigroup_consecutive_multiples}.  For $i \in \{1,2,\ldots,m\ell-1\}$, let $e_i$ be the smallest element of ${\rm S}^*$ congruent to $i$ modulo~$m\ell$.  
\begin{itemize}[leftmargin=*]
\item For $k \in\{0,\ldots,m-1\}$ and $j \in \{k,k+1,\ldots, \ell-1\}$, we have
\begin{equation}\label{Apery_set_nonspecial}
\begin{split}
e_{2mj+k} &= (j-k)(\ell m+ 2m)+ k(2m(\ell+1)+1) \text{ and}\\
e_{2mj+m+k} &=  (\ell m+m)+ (j-k)(\ell m+ 2m)+ k(2m(\ell+1)+1).
\end{split}
\end{equation}
\item For $j \in \{0,\ldots, m-2\}$, we have
\begin{equation}\label{Apery_set_special}
e_{2mj+(j+1)}=j(2m(\ell+1)+1)+(m\ell(\ell/2+1)+1).
\end{equation}
For $j \in \{0,\ldots, m-2\}$ and $k \in \{j+2,j+3,\ldots, m-1\}$, we have
\begin{equation}\label{Apery_set_special_bis}
e_{2mj+k}= \left(j+\frac{\ell}{2}-k\right)(m\ell+2m)+ k(2m(\ell+1)+1).
\end{equation}
\end{itemize}
\end{prop}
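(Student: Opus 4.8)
The plan is to compute the Apéry set directly from its definition, exploiting that $m\ell$ is the multiplicity of ${\rm S}^*$: since $\ell \ge 2m$, the smallest generator is $m\ell$. For a fixed residue class $i \pmod{m\ell}$, the Apéry element $e_i$ is the smallest element of ${\rm S}^*$ in that class; and if a factorization of such an element uses the generator $g_0 = m\ell$, deleting that generator yields a smaller element of ${\rm S}^*$ in the same class, contradicting minimality. Hence, abbreviating $g_1 = m\ell+m$, $g_2 = m\ell+2m$, $A = 2m(\ell+1)+1$, $B = m\ell(\ell/2+1)+1$,
\[
e_i = \min\Big\{ c_1 g_1 + c_2 g_2 + c_3 A + c_4 B \;:\; c_j \in \mb{N},\ c_1 g_1 + c_2 g_2 + c_3 A + c_4 B \equiv i \pmod{m\ell}\Big\}.
\]
First I would record the residues $g_1 \equiv m$, $g_2 \equiv 2m$, $A \equiv 2m+1$, $B \equiv 1 \pmod{m\ell}$, which make it routine to verify that each factorization on the right-hand sides of \eqref{Apery_set_nonspecial}, \eqref{Apery_set_special}, and \eqref{Apery_set_special_bis} lands in the asserted class, so that the displayed quantity is an \emph{upper} bound for the corresponding $e_i$. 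The substance of the proof is the matching lower bound, i.e.\ minimality.

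To control the minimization I would reduce the search to a bounded, structured family using three residue-preserving trades, each valid because $\ell \ge 2m$. The identity $2g_1 = m\ell + g_2$ shows that two copies of $g_1$ may be replaced by one copy of $g_2$, preserving the residue and lowering the value by $m\ell$, so I may assume $c_1 \le 1$. Since $mA \equiv g_1 + m g_2 \pmod{m\ell}$ with $mA - (g_1 + m g_2) = m\ell(m-1) > 0$, I may assume $c_3 \le m-1$. Finally, reducing modulo $m$ (where $g_1,g_2 \equiv 0$ and $A,B \equiv 1$) shows every factorization of class $i$ satisfies $c_3 + c_4 \equiv i \pmod m$, so the ``unit count'' is pinned to $k := i \bmod m \in \{0,\dots,m-1\}$. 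Because $A - B < 0$ for $\ell \ge 2m$, a copy of $B$ is strictly costlier than a copy of $A$ in the same unit role, so $B$ is used as sparingly as possible; a short computation then forces $c_4 \le 1$.

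With $c_1 \in \{0,1\}$, $c_4 \in \{0,1\}$ and $c_3 + c_4 = k$ fixed, the optimization becomes a visible trade-off between $A$ (cheap per use, but it overshoots the target block by $2m$ in residue, which must be absorbed by carrying copies of $g_2$ \emph{forward}) and $B$ (costly, but it can reach a low offset without such forward room). Writing $i = 2mj + r$ with $0 \le r < 2m$, I would split on whether the block index $j$ can absorb the $k$ copies of $A$. When $j \ge k$ the greedy choice $c_3 = k,\ c_2 = j-k$ (with one extra $g_1$ when $r \ge m$) is feasible and, comparing against the $B$-substitution via $-A + B + g_2 > 0$, optimal; this gives \eqref{Apery_set_nonspecial}. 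When $j = k-1$ the forward room is exhausted, and substituting a single $B$ for the missing block beats wrapping around (the difference is exactly $m\ell$), giving \eqref{Apery_set_special}. When $j \le k-2$ one instead wraps around by $m\ell$, paying $c_2 = j + \ell/2 - k$ extra copies of $g_2$ (nonnegative precisely because $k - j \le m-1 < \ell/2$), and again this beats the $B$-using alternatives; this gives \eqref{Apery_set_special_bis}.

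I expect the main obstacle to be the boundary analysis that locates the crossover between the $A$-driven and $B$-driven strategies precisely at $j = k$ and $j = k-1$, and the bookkeeping confirming that within each regime no other distribution of $(c_1,c_2,c_3,c_4)$ consistent with the class and the bounds above yields a smaller value. A clean way to package the minimality, once candidate values are in hand, is to verify for each claimed $e_i$ that $e_i - m\ell \notin {\rm S}^*$: this certifies $e_i \in \Ap({\rm S}^*; m\ell)$, and since the Apéry set contains exactly one element in each class modulo $m\ell$, the candidate must then be the true Apéry element of its class. The hypothesis $\ell \ge 2m$ enters repeatedly---both to validate the reduction trades and to guarantee that the wrap-around coefficient $j + \ell/2 - k$ is nonnegative---so I would track it carefully throughout.
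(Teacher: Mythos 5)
Your plan is correct in outline and would prove the proposition, but it is organized quite differently from the paper's proof, so a comparison is worthwhile. You run a single global minimization over factorizations in all of $g_1=m\ell+m$, $g_2=m\ell+2m$, $A=2m(\ell+1)+1$, $B=m\ell(\ell/2+1)+1$, normalizing by the trades $2g_1=m\ell+g_2$, $mA=g_1+mg_2+(m-1)m\ell$, $(\ell/2)g_2=(\ell/2+1)m\ell$, forcing $c_4\le 1$, and then splitting on $j\ge k$, $j=k-1$, $j\le k-2$; your decisive comparisons ($B+g_2-A=\tfrac{m\ell^2}{2}>0$, and the exact gap of $m\ell$ between the $B$-substitution and the wrap-around at $j=k-1$) are the right ones, and your final certification (check $e_i-m\ell\notin{\rm S}^*$ and invoke one-Ap\'ery-element-per-class) is sound. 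The paper instead stages the computation through the auxiliary semigroup ${\rm T}=\langle m\ell,\, g_1,\, g_2,\, A\rangle$: Lemma \ref{Ap1} computes $\Ap({\rm T};m\ell)$ using exactly your three trades plus a pigeonhole count (the $2\cdot\tfrac{\ell}{2}\cdot m=m\ell$ normalized elements must be the whole Ap\'ery set, so no individual minimality checks are needed there), and Lemma \ref{ApSSprime} then reduces membership in $\Ap({\rm S}^*;m\ell)$ to whether $s-kB-m\ell\in{\rm T}$ for some $k\ge 1$, so that all the hard inequality work (where $\ell\ge 2m$ enters) is confined to the perturbation caused by adjoining $B$, carried out in Lemma \ref{ApSprime}. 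What the staging buys is that the generator $B$ never appears inside the combinatorial normalization, only in a one-parameter membership test; what your direct route buys is that you avoid the auxiliary semigroup entirely, at the cost of threading the $A$-versus-$B$ trade-off through every residue class. Two details you should tighten: (i) with $c_3\le m-1$ and $c_4\le 1$, the congruence $c_3+c_4\equiv k\pmod m$ also admits $c_3+c_4=m$ when $k=0$, i.e.\ $(c_3,c_4)=(m-1,1)$, an edge case your ``pinning'' claim overlooks and which must be dismissed explicitly; (ii) the assertion that $B$ is ``costlier than $A$ in the same unit role'' is not literally meaningful, since $A\equiv 2m+1$ and $B\equiv 1\pmod{m\ell}$ lie in different residue classes --- the honest version is the class-preserving comparison you use later, so the $c_4\le 1$ reduction should be phrased as $2B>2A+(\tfrac{\ell}{2}-2)g_2$ (same class, smaller value, valid since $\ell\ge 2m\ge 4$).
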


\begin{prop}\label{Apery_S*bis}
Suppose $\ell$ is odd and let ${\rm S}^*$ be defined as in Theorem \ref{generic_semigroup_consecutive_multiples}.  For $i \in \{1,2,\ldots,m\ell-1\}$, let $e_i$ be the smallest element of ${\rm S}^*$ congruent to $i$ modulo~$m\ell$.  
\begin{itemize}[leftmargin=*]
\item For $k \in\{0,\ldots,m-1\}$ and $j \in \{k,k+1,\ldots, \ell-1\}$, we have
\[
\begin{split}
e_{2mj+k} &= (j-k)(\ell m+ 2m)+ k(2m(\ell+1)+1) \text{ and}\\
e_{2mj+m+k} &=  (\ell m+m)+ (j-k)(\ell m+ 2m)+ k(2m(\ell+1)+1).
\end{split}
\]
\item For $j \in \{0,\ldots, m-2\}$, we have
\[
e_{2mj+(j+1)}= \frac{m}{2} \ell(\ell+3)+1+ j(2m(\ell+1)+1).
\]
\item For $j \in \{0,\ldots, m-2\}$ and $k \in \{j+2,j+3,\ldots, m-1\}$, we have
\[
e_{2mj+k}= (m\ell+m)+\left(j+\frac{\ell-1}{2}-k\right)(m\ell+2m)+ k(2m(\ell+1)+1).
\]
\end{itemize}
\end{prop}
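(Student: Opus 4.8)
The plan is to compute $\Ap({\rm S}^*;m\ell)$ directly, in parallel with the even case recorded in Proposition~\ref{Apery_S*}, by exhibiting the minimal element of ${\rm S}^*$ in each residue class modulo the multiplicity. Since $\ell\ge 2m$, the smallest generator is $m\ell$, so this is the multiplicity. Writing $g_1=m\ell+m$, $g_2=m\ell+2m$, $b_1=2m(\ell+1)+1$, $b_2=\frac m2(\ell+1)(\ell+2)+1$ and $b_3=\frac m2\ell(\ell+3)+1$ for the six generators, the first step is the residue bookkeeping. Using that $\ell$ is odd, I would record
\[
g_1\equiv m,\quad g_2\equiv 2m,\quad b_1\equiv 2m+1,\quad b_2\equiv m+1,\quad b_3\equiv 1\pmod{m\ell},
\]
the last three following from the factorization identities used to prove part (1) of Theorem~\ref{generic_semigroup_consecutive_multiples}. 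In particular every insertion satisfies $b_i\equiv 1\pmod m$, while $g_1,g_2\equiv 0\pmod m$.

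The heart of the argument is a two-stage minimization. Because the insertions are the only generators not divisible by $m$, any $x\in{\rm S}^*$ with $x\equiv i\pmod{m\ell}$ uses a number of insertions congruent to $R:=(i\bmod m)$ modulo $m$; I would first show that a minimal such $x$ uses \emph{exactly} $R$ insertions, the point being that $b_1$ (a fortiori $b_2,b_3$) exceeds $m\ell$, so trading $m$ additional insertions for a smaller $\langle m\ell,g_1,g_2\rangle$-part can never pay off once $\ell\ge 2m$. Fixing a multiset of $R$ insertions, say $c_1$ copies of $b_1$, $c_2$ of $b_2$ and $c_3$ of $b_3$ with $c_1+c_2+c_3=R$, the remaining task is to realize the complementary class using $g_1,g_2,m\ell$ as cheaply as possible. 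This is governed by the Ap\'ery set of $\langle m\ell,g_1,g_2\rangle$, whose minimal element in the class $cm\pmod{m\ell}$ (for $0\le c\le\ell-1$) is $\lceil c/2\rceil m\ell+cm$, attained with $\lfloor c/2\rfloor$ copies of $g_2$ and $c\bmod 2$ copies of $g_1$. Thus each $e_i$ is the minimum, over the finitely many insertion multisets of size $R$, of $c_1b_1+c_2b_2+c_3b_3$ plus this completion.

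Carrying out the minimization is what should produce the case division in the statement. For the bulk of the classes the optimal choice is $(c_1,c_2,c_3)=(R,0,0)$ together with the cheapest $g_2$-heavy completion, which I expect to reproduce the two non-special families $e_{2mj+k}$ and $e_{2mj+m+k}$; the boundary classes, where a single large insertion undercuts a long string of $b_1$'s and $g$'s, give the special families indexed by $2mj+(j+1)$ and by $2mj+k$ with $k\ge j+2$. For each claimed value I would then (i) exhibit the displayed factorization, certifying $e_i\in{\rm S}^*$ and $e_i\equiv i$, and (ii) prove minimality by checking that every competing insertion multiset, after optimal $g$-completion, yields a strictly larger element; the exchange relations among $g_1,g_2,b_1,b_2,b_3$ reduce this to a bounded comparison per family. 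A final bookkeeping step verifies that the displayed index set is a complete, irredundant system of residues mod $m\ell$: since $\ell$ is odd, $j\mapsto 2j$ is a bijection of $\Z/\ell\Z$, so each class has a unique representative $2mj+k$ with $0\le k<m$, and one checks that the four families correspond to the disjoint regimes $k\le j$, $k=j+1$, $k\ge j+2$ and their $g_1$-shifts.

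I expect the main obstacle to be precisely step (ii), namely deciding, class by class, when the quadratic-in-$\ell$ insertions $b_2,b_3\approx\frac m2\ell^2$ beat repeated use of the linear-in-$\ell$ insertion $b_1\approx 2m\ell$ padded out by completions (themselves as large as $\approx\frac\ell2\cdot m\ell$). This is exactly the phenomenon absent from the even case, where $b_2\equiv 1\pmod{m\ell}$ leaves only two insertion directions; here the intermediate insertion $b_2\equiv m+1$ is a genuine third competitor and forces a finer boundary analysis. I would control the comparison uniformly by writing both competing costs as explicit affine functions of the residue data $(j,k)$ and invoking $\ell\ge 2m$ to make each inequality decisive, cross-checking the outcome against the genus identity $g({\rm S}^*)=\sum_{i=1}^{m\ell-1}(e_i-i)/(m\ell)$ of \eqref{gS*}, which must reproduce the bound in Theorem~\ref{generic_semigroup_consecutive_multiples}(3).
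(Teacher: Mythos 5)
Your overall framework is sound and genuinely different from the paper's: the paper only proves the even-$\ell$ analogue (Proposition~\ref{Apery_S*}), via the intermediate semigroup ${\rm T}=\langle m\ell, m\ell+m, m\ell+2m, 2m(\ell+1)+1\rangle$ and the reduction Lemmas~\ref{Ap1}--\ref{ApSprime}, and explicitly leaves the odd case to the reader; your two-stage scheme (exactly $R=i\bmod m$ insertions, then an optimal completion read off from $\Ap(\langle m\ell,m\ell+m,m\ell+2m\rangle;m\ell)$) is a legitimate and arguably more systematic alternative, and your residue bookkeeping and completion formula $\lceil c/2\rceil m\ell+cm$ are both correct. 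But as submitted this is a plan rather than a proof: step~(ii), the class-by-class comparison of insertion multisets that you yourself call ``the main obstacle,'' is where the entire content of the proposition lies, and none of it is carried out.

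There are also two concrete flaws in what you do argue. First, your justification of the ``exactly $R$ insertions'' reduction --- that $b_1>m\ell$ means extra insertions ``can never pay off once $\ell\ge 2m$'' --- fails quantitatively: completions can be as large as $\frac{\ell-1}{2}m\ell+(\ell-1)m\approx\frac m2\ell^2$, which dwarfs $mb_1\approx 2m^2\ell$ as soon as $\ell$ is large, so a pure size comparison is inconclusive. The reduction is true, but for a residue reason you never invoke: since $b_1\equiv 2m+1$, $b_2\equiv m+1$, $b_3\equiv 1\pmod{m\ell}$, deleting any $m$ insertions ($p$ copies of $b_1$, $q$ of $b_2$, $r$ of $b_3$) leaves a deficit in the class $(2p+q+1)m$, whose index is at most $2m+1$; the replacement completion therefore costs at most $(m+1)m\ell+2m^2+m$, which is smaller than $mb_1$ when only $b_1$'s are deleted, and smaller than $b_3$ (hence than any deletion touching $b_2$ or $b_3$) by a computation using $\ell\ge 2m+1$. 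Second, your closing ``bookkeeping'' claim --- that the displayed families form a complete, irredundant system of residues --- is false for the statement as printed, and executing your own plan would have revealed it: precisely because $j\mapsto 2j$ is a bijection mod $\ell$ when $\ell$ is odd, the indices $2mj+k$ with $j$ running up to $\ell-1$ wrap around mod $m\ell$ and collide with the family $2mj+m+k$, with incompatible formulas (for $m=2,\ell=5$ the first bullet at $j=3,k=0$ gives $42$ for the class of $2$, whose true minimum is $12$); the ranges must be cut to $j\le\frac{\ell-1}{2}$ and $j\le\frac{\ell-3}{2}$ respectively, as the paper's genus computation tacitly does. Moreover the classes $2mj+m+(j+1)$ and $2mj+m+k$ with $k\ge j+2$ are then covered by no bullet at all; their minima require the generator $\frac m2(\ell+1)(\ell+2)+1$ (e.g.\ $e_{2mj+m+j+1}=\frac m2(\ell+1)(\ell+2)+1+j(2m(\ell+1)+1)$), which is exactly the ``third competitor'' you flag but never actually deploy. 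A correct write-up along your lines must both repair the index ranges and supply these missing families.
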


\noindent Propositions~\ref{Apery_S*} and \ref{Apery_S*bis} give a complete characterization of $(e_1,e_2,\ldots, e_{m\ell-1})$.

Assuming Proposition~\ref{Apery_S*} for now, we prove the second part of Theorem~\ref{generic_semigroup_consecutive_multiples}.  
\begin{proof}[Proof of Theorem~\ref{generic_semigroup_consecutive_multiples} (2)] 
As in the proof of Theorem~\ref{m2_approx}, the fact that $\rm{S}^* \subseteq \rm{S}$ implies $g(\rm{S}) \le g(\rm{S}^*)$. We now use \eqref{gS*} to compute $g(\rm{S}^*)$.

\medskip
\noindent Suppose $\ell$ is even.  The contribution of the Ap\'ery values in \eqref{Apery_set_nonspecial} to $g({\rm S}^*)$ is
\[
\begin{split}
g_1&= \sum_{k=0}^{m-1} \sum_{j=k}^{\frac{\ell}{2}-1} \frac{e_{2mj+k}-(2mj+k)}{m \ell}+ \sum_{k=0}^{m-1} \sum_{j=k}^{\frac{\ell}{2}-1} \frac{e_{2mj+m+k}-(2mj+m+k)}{m \ell} \\
&= \sum_{k=0}^{m-1} \sum_{j=k}^{\frac{\ell}{2}-1} (j+k)+ \sum_{k=0}^{m-1} \sum_{j=k}^{\frac{\ell}{2}-1} (j+k+1) \\
&= \frac{m \ell^2}{4}+ \frac{m \ell}{2}(m-1)- m(m-1)(m-1/2).
\end{split}
\]
\noindent Similarly, the contribution of the Ap\'ery values in \eqref{Apery_set_special}
to $g({\rm S}^*)$ is
\[
\begin{split}
    g_2 &= \sum_{j=0}^{m-2} \frac{e_{2mj+j+1}-(2mj+j+1)}{m \ell}+ \sum_{j=0}^{m-2} \frac{e_{2mj+j+1+m}-(2mj+j+1+m)}{m \ell} \\
    &= \sum_{j=0}^{m-2} (2j+ \frac{\ell}{2}+1)+ \sum_{j=0}^{m-2} (2j+ \frac{\ell}{2}+2) \\
    &= (m-1)(\ell+2m-1).
\end{split}
\]
The contribution of the Ap\'ery values in \eqref{Apery_set_special_bis} to $g({\rm S}^*)$ is
\[
\begin{split}
    g_3 &= \sum_{j=0}^{m-2} \sum_{k=j+2}^{m-1} \frac{e_{2mj+k}-(2mj+k)}{m\ell}+ \sum_{j=0}^{m-2} \sum_{k=j+2}^{m-1} \frac{e_{2mj+k+m}-(2mj+k+m)}{m\ell} \\
    &= \sum_{j=0}^{m-2} \sum_{k=j+2}^{m-1} (j+k+\frac{\ell}{2}+1)+ \sum_{j=0}^{m-2} \sum_{k=j+2}^{m-1} (j+k+\frac{\ell}{2}+2)\\
    &= \frac{\ell}{2}(m-1)(m-2)+(m-1)(m+1/2)(m-2).
\end{split}
\]
It follows that $g({\rm S}^*)= g_1+g_2+g_3=\frac{1}{4}m \ell^2+ m(m-1)\ell+ (m-1)(m-2)$.

\medskip
Now suppose $\ell$ is odd.  
This time, the
Ap\'ery elements $e_{2mj+k}$ and $e_{2mj+m+k}$ with $j \geq k$ 
contribute
\[
\begin{split}
g_1&= \sum_{k=0}^{m-1} \sum_{j=k}^{\frac{\ell-1}{2}} \frac{e_{2mj+k}-(2mj+k)}{m \ell}+ \sum_{k=0}^{m-1} \sum_{j=k}^{\frac{\ell-3}{2}} \frac{e_{2mj+m+k}-(2mj+m+k)}{m \ell}\\
&= \sum_{k=0}^{m-1} \sum_{j=k}^{\frac{\ell-1}{2}} (j+k)+ \sum_{k=0}^{m-1} \sum_{j=k}^{\frac{\ell-3}{2}} (j+k+1) \\
&= \frac{m \ell^2}{4}+ \frac{m\ell}{2}(m-1)- m((m-1/2)(m-1)+1/4)
\end{split}
\]
to $g({\rm S}^*)$.
The Ap\'ery elements $e_{2mj+j+1}$ and $e_{2mj+m+j+1}$ with $0 \leq j \leq m-2$ contribute
\[
\begin{split}
    g_2 &= \sum_{j=0}^{m-2} \frac{e_{2mj+j+1}-(2mj+j+1)}{m \ell}+ \sum_{j=0}^{m-2} \frac{e_{2mj+j+1+m}-(2mj+j+1+m)}{m \ell} \\
    &= 2\sum_{j=0}^{m-2} (2j+ \frac{\ell}{2}+1)\\
    &= (m-1)(\ell+2m-1).
\end{split}
\]
The remaining Ap\'ery elements contribute
\[
\begin{split}
    g_3 &= \sum_{j=0}^{m-2} \sum_{k=j+2}^{m-1} \frac{e_{2mj+k}-(2mj+k)}{m\ell}+ \sum_{j=0}^{m-2} \sum_{k=j+2}^{m-1} \frac{e_{2mj+k+m}-(2mj+k+m)}{m\ell} \\
    &= 2\sum_{j=0}^{m-2} \sum_{k=j+2}^{m-1} (j+k+\frac{\ell+3}{2})\\
    &=\frac{\ell}{2}(m-1)(m-2)+(m-1)(m+1/2)(m-2).
    \end{split}
\]
It follows that $g({\rm S}^*)= g_1+g_2+g_3=\frac{m(\ell+1)(\ell-1)}{4}+ m(m-1)\ell+ (m-1)(m-2)$.

\end{proof}

It remains to prove Propositions \ref{Apery_S*} and \ref{Apery_S*bis}. We give a full proof of Proposition \ref{Apery_S*}
and leave the proof of the similar Proposition~\ref{Apery_S*bis} to the reader. 
For the remainder of this section, suppose $\ell$ is even.  Proposition \ref{Apery_S*} follows from the following three lemmas.
\begin{lemma}\label{Ap1}
Let $m \ge 2$.  Suppose that $\ell$ is an even integer satisfying $\ell \ge 2m$. Let ${\rm T} = \langle m \ell, m \ell+m, m\ell+2m ,2m(\ell+1)+ 1 \rangle$. Then $\Ap({\rm T};m \ell)$ consists of those elements of the form 
\[
x  (m\ell+m) + y  (m\ell+2m) + z  (2m (\ell+1)+1)
\]
where $x \in \{0,1\},\ y \in \{0,1,\ldots, \frac{\ell}{2}-1\}$, and $z \in \{0,1,\ldots, m-1\}$. 
\end{lemma}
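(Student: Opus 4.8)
The plan is to label the four generators $g_0=m\ell$, $g_1=m\ell+m$, $g_2=m\ell+2m$ and $g_3=2m(\ell+1)+1$, and to prove the asserted equality by a counting argument supported by two relations. The first is the obvious $2g_1=g_0+g_2$; the second, which drives the whole argument, is the algebraic identity $(2m-1)g_1+g_2=mg_3$ (both sides equal $2m^2\ell+2m^2+m$), valid for all $m,\ell$. Write $P$ for the proposed set $\{xg_1+yg_2+zg_3 : x\in\{0,1\},\ 0\le y\le \ell/2-1,\ 0\le z\le m-1\}$. First I would show that the $2\cdot(\ell/2)\cdot m=m\ell$ elements of $P$ lie in pairwise distinct residue classes modulo $m\ell$: since each such element is $\equiv m(x+2y+2z)+z \pmod{m\ell}$, reducing modulo $m$ forces $z=z'$ (as $z,z'\in\{0,\dots,m-1\}$), dividing the remaining congruence by $m$ and using $0\le x+2y\le \ell-1$ forces $x+2y=x'+2y'$, and a parity check then gives $x=x'$ and $y=y'$. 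Because $\Ap(T;m\ell)$ has exactly $m\ell$ elements, one minimal representative per class, it will then suffice to prove $P\subseteq\Ap(T;m\ell)$, i.e. that each $p\in P$ satisfies $p-m\ell\notin T$; note this is equivalent to showing that \emph{every} factorization of $p$ has trivial $g_0$-coefficient, since $p-kg_0\in T$ for some $k\ge 1$ already yields $p-g_0=(p-kg_0)+(k-1)g_0\in T$.

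The crux is this last claim. Fix $p=xg_1+yg_2+zg_3\in P$ and suppose $p=a_0g_0+a_1g_1+a_2g_2+a_3g_3$ with all $a_i\ge0$; I must show $a_0=0$. Reducing modulo $m$, where $g_0,g_1,g_2\equiv 0$ and $g_3\equiv 1$, gives $a_3\equiv z\pmod m$, so $a_3=z+mw$ with $w\ge 0$. Now I apply the identity $mg_3=(2m-1)g_1+g_2$ to rewrite $w$ copies of $g_3$, producing a new factorization of $p$ whose $g_3$-coefficient is exactly $z$ while its $g_0$-coefficient is still $a_0$. Subtracting $zg_3$ then yields an equality $a_0g_0+a_1'g_1+a_2'g_2=xg_1+yg_2$ inside the subsemigroup $T''=\langle g_0,g_1,g_2\rangle$.

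To finish I would invoke the Apéry set of an arithmetic triple. Since $T''=m\cdot\langle \ell,\ell+1,\ell+2\rangle$, the number $xg_1+yg_2=m\,(x(\ell+1)+y(\ell+2))$ corresponds to the minimal element of $S_0=\langle \ell,\ell+1,\ell+2\rangle$ in residue class $x+2y$ modulo $\ell$: writing any element of $S_0$ in the form (total number of generators)$\cdot\ell$ plus its weighted residue and minimizing shows $x(\ell+1)+y(\ell+2)\in\Ap(S_0;\ell)$, hence $xg_1+yg_2\in\Ap(T'';g_0)$. Consequently the displayed equality in $T''$ forces $a_0=0$: otherwise $(xg_1+yg_2)-g_0=(a_0-1)g_0+a_1'g_1+a_2'g_2\in T''$, contradicting membership in $\Ap(T'';g_0)$. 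This gives $P\subseteq\Ap(T;m\ell)$, and combined with the count $|P|=m\ell=|\Ap(T;m\ell)|$ from the distinctness step, it yields $P=\Ap(T;m\ell)$.

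I expect the main obstacle to be precisely the normalization in the second paragraph. A naive approach would try to bound $a_3$ directly, but for large $\ell$ one cannot exclude $a_3>z$ by size alone; what rescues the argument is the exact identity $mg_3=(2m-1)g_1+g_2$, which lets one reduce any factorization to one with $g_3$-coefficient equal to $z$ without disturbing $a_0$, thereby collapsing the problem into the subsemigroup $T''$. The only other points requiring care are the bookkeeping of the residue ranges guaranteeing that $P$ meets each class exactly once, and a clean statement of the classical consecutive-integers Apéry computation for $S_0$.
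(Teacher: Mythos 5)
Your proof is correct, but it runs in the opposite direction from the paper's and rests on different supporting facts, so it is a genuinely different argument. The paper proves the containment $\Ap({\rm T};m\ell) \subseteq P$ (where $P$ is your proposed set): starting from an arbitrary Ap\'ery element, it uses the rewriting relations $2(m\ell+m)=(m\ell+2m)+m\ell$, $\frac{\ell}{2}(m\ell+2m)=\left(\frac{\ell}{2}+1\right)m\ell$, and $m\left(2m(\ell+1)+1\right)=(m\ell+m)+m(m\ell+2m)+(m-1)m\ell$ to show that any factorization whose coefficients exceed the stated ranges yields $s-m\ell\in{\rm T}$; the count $|\Ap({\rm T};m\ell)|=m\ell$ against the $m\ell$ available tuples then forces equality. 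You instead prove $P \subseteq \Ap({\rm T};m\ell)$, which is the harder, non-membership direction, since you must control \emph{every} factorization of $p\in P$. Your normalization via $mg_3=(2m-1)g_1+g_2$ (the same relation as the paper's third identity, up to the syzygy $2g_1=g_0+g_2$) correctly pins the $g_3$-coefficient at $z$ without disturbing $a_0$, and your reduction to the Ap\'ery set of $m\langle \ell,\ell+1,\ell+2\rangle$ is sound: the minimal element of $\langle\ell,\ell+1,\ell+2\rangle$ in class $r$ is $\lceil r/2\rceil\,\ell+r$, which is exactly what your elements realize because $x+2y\le \ell-1$ and $x+y=\lceil (x+2y)/2\rceil$. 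Both proofs lean on the same counting fact (the Ap\'ery set has exactly $m\ell$ elements, one per residue class modulo $m\ell$), so neither establishes both containments by hand; the paper's direction is cheaper because certifying membership in ${\rm T}$ only requires exhibiting relations, whereas yours needs the separate (classical, but not free) Ap\'ery computation for the arithmetic-progression subsemigroup. In exchange, your route makes explicit that $P$ is a complete residue system and isolates the structural role of the subsemigroup $\langle m\ell, m\ell+m, m\ell+2m\rangle$, neither of which the paper's proof exhibits directly. One small observation: like the paper's own proof, yours never invokes the hypothesis $\ell\ge 2m$, which is needed only in later lemmas such as Lemma~\ref{ApSprime}.
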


\begin{lemma}\label{ApSSprime}
Continuing with the notation of Lemma~\ref{Ap1}, suppose $s \in \Ap({\rm T}; m\ell)$.  Then $s \in \Ap({\rm S}^*;m\ell)$ if and only if there does not exist a positive integer $k$ for which 
\[
s- k  \left(\frac{\ell(\ell+2)}{2} m + 1\right) -m\ell \in {\rm T}.
\]
\end{lemma}

\begin{lemma}\label{ApSprime}
Let $m \ge 2$.  Suppose that $\ell$ is an even integer satisfying $\ell \ge 2m$. Then $\Ap({\rm S}^*;m\ell)$ consists of the elements
\[
x  (m\ell+m) + y  (m\ell+2m) + z  (2m (\ell+1)+1) 
\]
with $x \in \{0,1\},\ y \in \{0,1,\ldots, \frac{\ell}{2}-2\}$, and $z\in \{0,1,\ldots, m-1\}$, together with the elements
\[
\left(\frac{\ell}{2}-1\right)(m\ell+2m), (m\ell+m) + \left(\frac{\ell}{2}-1\right)(m\ell+2m)
\]
and the elements
\[
x  (m\ell+m) + z  (2m (\ell+1)+1) + \left(\frac{\ell(\ell+2)}{2} m + 1\right)
\]
where $x\in \{0,1\}$ and $z\in \{0,1,\ldots, m-2\}$.
\end{lemma}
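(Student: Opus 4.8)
The plan is to obtain Lemma~\ref{ApSprime} as the combination of Lemmas~\ref{Ap1} and~\ref{ApSSprime}, which together reduce everything to an explicit comparison with the Apéry data of~${\rm T}$. Write $A = 2m(\ell+1)+1$ and $B = \frac{\ell(\ell+2)}{2}m+1$ for the two ``extra'' generators. The first thing I would record is the reformulation $B = \frac{\ell}{2}(m\ell+2m)+1$, so that $B \equiv 1 \pmod{m\ell}$, together with $A = (m\ell+2m)+m\ell+1 \equiv 2m+1 \pmod{m\ell}$. By Lemma~\ref{Ap1}, $\Ap({\rm T};m\ell)$ is the $m\ell$-element list $x(m\ell+m)+y(m\ell+2m)+zA$ with $x\in\{0,1\}$, $y\in\{0,\dots,\frac{\ell}{2}-1\}$, $z\in\{0,\dots,m-1\}$, one per residue class. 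The assertion of Lemma~\ref{ApSprime} is then that passing from ${\rm T}$ to ${\rm S}^*$ keeps every Apéry element with $y\le\frac{\ell}{2}-2$ (the first family) and the two elements with $(y,z)=(\frac{\ell}{2}-1,0)$ (the second family), while it replaces each of the $2m-2$ elements with $y=\frac{\ell}{2}-1$ and $z\ge1$ by a strictly smaller element built from $B$ (the third family). Since ${\rm T}\subseteq{\rm S}^*$ with the same multiplicity $m\ell$, an element of $\Ap({\rm S}^*;m\ell)$ lying in ${\rm T}$ is automatically in $\Ap({\rm T};m\ell)$, so the first two families will account for $\Ap({\rm S}^*;m\ell)\cap{\rm T}$ and the third for $\Ap({\rm S}^*;m\ell)\setminus{\rm T}$.

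The engine is the algebraic identity
\[
s - B - m\ell = x(m\ell+m) + \Big(y+1-\tfrac{\ell}{2}\Big)(m\ell+2m) + (z-1)A,
\]
valid for every $s = x(m\ell+m)+y(m\ell+2m)+zA$, which follows at once from $B=\frac{\ell}{2}(m\ell+2m)+1$ and $A=(m\ell+2m)+m\ell+1$. Specializing to $y=\frac{\ell}{2}-1$ yields $s-B-m\ell = x(m\ell+m)+(z-1)A$, which lies in ${\rm T}$ exactly when $z\ge1$. Hence, by Lemma~\ref{ApSSprime} applied with $k=1$, every such $s$ is excluded from $\Ap({\rm S}^*;m\ell)$, and the element $s-m\ell = x(m\ell+m)+(z-1)A+B$ — which is precisely the third family, with $z$ shifted to $z-1$ — is the natural replacement. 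A short residue computation confirms that $s-m\ell$ and $s$ are congruent mod $m\ell$ and that $s-m\ell$ is genuinely the third-family element, so the replacement is consistent both with membership in ${\rm S}^*$ and with the residue class of the discarded generator.

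The technical core, and the main obstacle, is to upgrade these $k=1$ statements to the full ``for all $k$'' conditions demanded by Lemma~\ref{ApSSprime}, namely: (i) for every $s$ in the first two families, $s-kB-m\ell\notin{\rm T}$ for all $k\ge1$; and (ii) for each replacement element $w=x(m\ell+m)+(z-1)A+B$, one has $w-m\ell\notin{\rm S}^*$, i.e.\ $w-m\ell-jB\notin{\rm T}$ for all $j\ge0$. I would handle both uniformly by translating ``${\rm T}$-membership'' into an inequality against the Apéry values of Lemma~\ref{Ap1}: a nonnegative integer $n$ lies in ${\rm T}$ iff $n\ge e^{{\rm T}}_{\,n\bmod m\ell}$, where $e^{{\rm T}}_{\bullet}$ is read off from that lemma. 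The key point making this finite is that $B\approx\frac{1}{2}m\ell^2$ is comparable to the largest Apéry value of ${\rm T}$; together with the standing hypothesis $\ell\ge2m$, this forces $s-kB-m\ell<0$ (hence trivially outside ${\rm T}$) once $k$ exceeds a small explicit bound, so only a handful of values of $k$ (essentially $k\le2$, and similarly for $j$) survive and must be checked. The generalized identity obtained by subtracting $k$ copies of $\frac{\ell}{2}(m\ell+2m)+1$ reduces each surviving case to comparing a short ${\rm T}$-combination with the relevant $e^{{\rm T}}_i$, and it is exactly here that $\ell\ge2m$ is used in an essential way to guarantee the inequalities.

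Finally I would assemble the three families: the survivors constitute $\Ap({\rm S}^*;m\ell)\cap{\rm T}$, the replacement elements $w$ constitute $\Ap({\rm S}^*;m\ell)\setminus{\rm T}$, and a residue-and-count check closes the argument. The three families contain $m(\ell-2)$, $2$, and $2m-2$ elements respectively, totalling $m\ell$; since the first two are a sub-list of the distinct-residue set $\Ap({\rm T};m\ell)$ and the third occupies exactly the residue classes vacated by the discarded elements (with $z\mapsto z-1$), all $m\ell$ classes are hit exactly once. Each listed element lies in ${\rm S}^*$ and, by step~(i)/(ii), admits no improvement by $m\ell$, so the list is forced to equal $\Ap({\rm S}^*;m\ell)$, completing the proof.
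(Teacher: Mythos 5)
Your proposal is sound, and its skeleton matches the paper's: both pass through Lemmas~\ref{Ap1} and~\ref{ApSSprime}, both rest on the identity $\left(\frac{\ell}{2}-1\right)(m\ell+2m)+A-m\ell=B$ (writing $A=2m(\ell+1)+1$, $B=\frac{\ell(\ell+2)}{2}m+1$) to discard the elements of $\Ap({\rm T};m\ell)$ with $y=\frac{\ell}{2}-1$ and $z\ge1$ and to exhibit the third family as their residue-class replacements, and both close with the same count-and-distinct-residues argument. Where you genuinely diverge is in certifying $s-kB-m\ell\notin{\rm T}$ for the survivors (and $w-m\ell-jB\notin{\rm T}$ for the replacements). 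The paper treats all $k\ge1$ uniformly: it assumes a ${\rm T}$-factorization exists, reduces modulo $m$ to force the $A$-coefficient to be at least $z-k$, and then rules out every $k$ by sign and monotonicity estimates after splitting into the cases $z<k$ and $z\ge k$; this is where its hypotheses bite, and why it checks $(m,\ell)=(2,4)$ by hand. You instead bound $k$ by size --- correct: $s-kB-m\ell<0$ once $k\ge3$, using $\ell\ge 2m$ --- and compare the finitely many surviving shifts against the minimal element of ${\rm T}$ in their residue class, read off from Lemma~\ref{Ap1}. I verified that this route works, and in fact more cleanly than your write-up suggests: in each surviving case ($k=1,2$ with $z\ge k$ for the first two families, $j=0,1,2$ for the third), the shifted element differs from the relevant canonical Ap\'ery element of ${\rm T}$ by an \emph{exact} negative multiple of $m\ell$ --- for instance $s-B-m\ell=e-\left(\frac{\ell}{2}+1\right)m\ell$ with $e=x(m\ell+m)+(y+1)(m\ell+2m)+(z-1)A\in\Ap({\rm T};m\ell)$ whenever $y\le\frac{\ell}{2}-2$ and $z\ge1$ --- while the low-$z$ cases are simply negative; so those comparisons are identities rather than inequalities, $\ell\ge2m$ is needed only for the bounds on $k$ and $j$, and your argument appears to need no special case at $(m,\ell)=(2,4)$. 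The only thing keeping your text from being a complete proof is that these finitely many comparisons (including the wrap-around of the $y$-coordinate modulo $\frac{\ell}{2}$ when identifying canonical representatives) are asserted rather than carried out; they constitute essentially all of the technical content and must be written down, but none of them fails.
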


\noindent We now prove these three lemmas.
\begin{proof}[Proof of Lemma~\ref{Ap1}]
For every $s \in \Ap({\rm T};m\ell)$, we have $s - m\ell \not\in {\rm T}$.  Every $s \in {\rm T}$ is of the form
\begin{equation}\label{eq:s-factorization}
s = w  m\ell + x  (m\ell+m) + y  (m\ell+2m) + z  (2m (\ell+1)+1)
\end{equation}
where $w,x,y,z \in \Z_{\ge 0}$. Accordingly, suppose $(w,x,y,z) \in Z(s)$ is a factorization of $s \in {\rm T}$.  If $w \ge 1$ then $s-m\ell \in {\rm T}$.  Therefore, every $s \in \Ap({\rm T};m\ell)$ is as in \eqref{eq:s-factorization} with $w=0$. On the other hand, the fact that 
\[
m   (2m (\ell+1)+1)=  1 (m\ell+m) + m  (m\ell+2m) + (m-1) m\ell
\]
implies that $z-m\ell \in {\rm T}$ whenever 
$z \ge m$. Similarly, the two equalities
\[
2  (m\ell+m) =  1 (m\ell+2m) + m\ell \text{ and }
\frac{\ell}{2}  (m\ell+2m) =  \left(\frac{\ell}{2}+1\right) m\ell
\]
imply that $s-m\ell \in {\rm T}$ whenever $x \ge 2$ or $y \ge \frac{\ell}{2}$.
We conclude that every $s \in \Ap({\rm T};m\ell)$ is of the form
\[
s = x  (m\ell+m) + y  (m\ell+2m) + z  (2m (\ell+1)+1) 
\]
with $x \in \{0,1\},\ y \in \{0,1,\ldots, \frac{\ell}{2}-1\}$, and $z \in \{0,1,\ldots, m-1\}$.  As there are precisely $m\ell$ elements of this form, they necessarily comprise $\Ap({\rm T};m\ell)$.
\end{proof}

\begin{proof}[Proof of Lemma~\ref{ApSSprime}]
As $s \in \Ap({\rm T};m \ell)$, we have $s - m\ell \not\in {\rm T}$.  By definition, $s \in \Ap({\rm S}^*;m\ell)$ if and only if $s - m \ell \not\in {\rm S}^*$. Here 
$s-m\ell \in {\rm S}^*$ if and only if
\[
s-m\ell =  x  (m\ell+m) + y  (m\ell+2m) + z  (2m (\ell+1)+1) + k  \left(\frac{\ell(\ell+2)}{2} m + 1\right)
\]
where $x,y,z,k \in \Z_{\ge 0}$. As $s- m\ell \not\in {\rm T}$, there are no such $x,y,z,k$ with $k=0$. Consequently, $s-m\ell \in {\rm S}^*$ if and only if there exists a positive integer $k$ for which
\[
s- k  \left(\frac{\ell(\ell+2)}{2} m + 1\right) -m\ell \in {\rm T}.
\]
\end{proof}

\begin{proof}[Proof of Lemma~\ref{ApSprime}]
When $(m,\ell) = (2,4)$ we can check directly that the statement holds, so from now on suppose that either $m\ge 4$ or $m=2$ and $\ell\ge 6$. To begin, note that
\[
\left(\frac{\ell}{2}-1\right)(m\ell+2m)+ (2m (\ell+1)+1) - m\ell = \frac{\ell(\ell+2)}{2} m + 1.
\]
Therefore, $s \not\in \Ap({\rm S}^*;m\ell)$ whenever
\[
s = x  (m\ell+m) + y  (m\ell+2m) + z  (2m (\ell+1)+1) 
\]
with $y = \frac{\ell}{2} -1$ and $z \in \Z_{\ge 1}$.

\medskip
Next we show that every $s \in \Ap({\rm T};m\ell)$ of the form
\begin{equation}\label{eq:most_in_Ap}
s = x  (m\ell+m) + y  (m\ell+2m) + z  (2m (\ell+1)+1) 
\end{equation}
with $x \in \{0,1\},\ y \in \{0,1,\ldots, \frac{\ell}{2}-1\}$, $z\in \{0,1,\ldots, m-1\}$, and such that $z=0$ whenever $y = \frac{\ell}{2}-1$, belongs to $\Ap({\rm S}^*;m\ell)$. According to Lemma~\ref{ApSSprime}, we need only show that there is no positive integer $k$ for which
\[
s- k  \left(\frac{\ell(\ell+2)}{2} m + 1\right) -m\ell \in {\rm T}.
\]

\noindent We first show that there is no such $k$ for which $z < k$.  We will use the fact that 
\[
(2m (\ell+1)+1)  - \left(\frac{\ell(\ell+2)}{2} m + 1\right) = m\ell + 2m - \frac{\ell^2 m}{2}.
\]
If $z < k$, we have
\begin{equation}\label{eq:z<k}
\begin{split}
& x  (m\ell+m) + y  (m\ell+2m) + z  (2m (\ell+1)+1)- k  \left(\frac{\ell(\ell+2)}{2} m + 1\right) -m\ell\\
\le \hspace{3pt}& (m\ell+m) + \left( \frac{\ell}{2}-1\right)(m\ell+2m) + k  \left(m\ell + 2m - \frac{\ell^2 m}{2}\right) - (2m(\ell+1)+1) - m\ell.
\end{split}
\end{equation}
Since $m\ell + 2m - \frac{\ell^2 m}{2}$ is negative the right-hand side of \eqref{eq:z<k} is a decreasing function of $k$.  For $k = 1$ we see that this quantity is negative.  This completes the proof in this case.

\noindent Now suppose $z \ge k$ and that $k$ is a positive integer such that 
\[
s- k  \left(\frac{\ell(\ell+2)}{2} m + 1\right) -m\ell \in {\rm T}.
\]
Then there exist  $a,b,c \in \Z_{\ge 0}$ for which 
\[
\begin{split}
&x  (m\ell+m) + y  (m\ell+2m) + z  (2m (\ell+1)+1)- k  \left(\frac{\ell(\ell+2)}{2} m + 1\right) -m\ell \\
= \hspace{3pt}&a  (m\ell+m) + b  (m\ell+2m) + c  (2m (\ell+1)+1).
\end{split}
\]

Taking both sides of 
this equation modulo $m$, we see that $z-k \equiv c\pmod{m}$. This implies that $c \ge z-k$.
Note that 
\[
z  (2m (\ell+1)+1)- k  \left(\frac{\ell(\ell+2)}{2} m + 1\right)-  (z-k)(2m (\ell+1)+1)  
= 
k  \left(m\ell + 2m - \frac{\ell^2 m}{2}\right).
\]
Since $a,b \in \Z_{\ge 0}$, we must have 
\[
x  (m\ell+m) + y  (m\ell+2m) + k  \left(m\ell + 2m - \frac{\ell^2 m}{2}\right) - m\ell \ge 0.
\]
Since $x\in\{0,1\}$ and $y \in \{0,1,\ldots, \frac{\ell}{2}-1\}$, we have
\[
\begin{split}
&x  (m\ell+m) + y  (m\ell+2m) + k  (2m (\ell+1)+1)- k  \left(\frac{\ell(\ell+2)}{2} m + 1\right) -m\ell \\
\le \hspace{3pt}& (m\ell+m) + \left( \frac{\ell}{2}-1\right)(m\ell+2m) + k  \left(m\ell + 2m - \frac{\ell^2 m}{2}\right) - m\ell.
\end{split}
\]
Recalling that either $m \ge 4$ or $m=2$ and $\ell \ge 6$, since 
$\ell \ge 2m$, this expression is negative when $k \ge 2$.  When $k=1$ this expression is equal to $m\ell+m$.  We conclude that if there exists $k \in \Z_{\ge 1}$ for which
\[
x  (m\ell+m) + y  (m\ell+2m) + z  (2m (\ell+1)+1) -k  \left(\frac{\ell(\ell+2)}{2} m + 1\right) -m\ell \in {\rm T}
\]
where $x\in \{0,1\}, y \in \{0,1,\ldots, \frac{\ell}{2}-1\}$, and $z\in \{0,1,\ldots, m-1\}$, then $y = \frac{\ell}{2}-1$ and $z \neq 0$.  This completes the proof that the elements described in \eqref{eq:most_in_Ap} belong to $\Ap(S^*;m\ell)$.

\medskip
Now suppose that 
\[
s = x  (m\ell+m) + z  (2m (\ell+1)+1) + \left(\frac{\ell(\ell+2)}{2} m + 1\right)
\]
where $x\in \{0,1\}$ and $z\in \{0,1,\ldots, m-2\}$. We see that $s+m\ell \in \Ap({\rm T};m\ell)$.  To complete the proof, we need only show that $s-m \ell \not\in {\rm S}^*$.  

\noindent Accordingly, suppose $s-m\ell \in {\rm S}^*$.  There are then $a,b,c,d \in \Z_{\ge 0}$ for which
\begin{equation}\label{(a,b,c,d)}
\begin{split}
& x  (m\ell+m) + z  (2m (\ell+1)+1) + \left(\frac{\ell(\ell+2)}{2} m + 1\right) -m\ell \\
 = \hspace{3pt}& a  (m\ell+m) + b  (m\ell+2m) + c  (2m (\ell+1)+1) + d  \left(\frac{\ell(\ell+2)}{2} m + 1\right).
\end{split}
\end{equation}
The fact that $s+m\ell \in \Ap({\rm S}^*;m\ell)$ now forces $d \ge 1$.  If there were such a solution $(a,b,c,d)$ with $d=1$, we would then have 
\[
x  (m\ell+m) + z  (2m (\ell+1)+1)-m\ell \in {\rm T}
\]
which contradicts the fact that $x  (m\ell+m) + z  (2m (\ell+1)+1) \in \Ap({\rm T};m\ell)$.  So necessarily $d\ge 2$.  Taking both sides of \eqref{(a,b,c,d)} modulo $m$, we deduce that $z \equiv c + (d-1) \pmod{m}$.  As $z \in \{0,1,\ldots, m-2\}$ and $c, d-1 \ge 0$, we have $z-c \le d-1$. The fact that
\[
x  (m\ell+m) +(d-1) \bigg(2m(\ell+1)+1 -\bigg(\frac{\ell(\ell+2)}{2} m + 1\bigg)  \bigg) - m \ell < 0
\]
allows us to conclude.
\end{proof}

\section{Semigroups from supersymmetric triples}\label{semigroups_from_supersymmetric_triples}

\noindent In this section we suppose that $2 \le a< b< c$ are pairwise relatively prime integers, and we let  ${\rm S}= {\rm S}(a,b,c) := \langle ab,ac,bc\rangle$. We recall some basic facts about these numerical semigroups.

\begin{prop}\label{supersym_prop}
Suppose $2 \le a< b< c$ are pairwise relatively prime integers. Let  ${\rm S}= {\rm S}(a,b,c) := \langle ab,ac,bc\rangle$.  We have that
\begin{enumerate}[leftmargin=*]
\item $F(\rm{S}) = 2abc-(ab+ac+bc)$;
\item $\rm{S}$ is symmetric; and
\item $g(\rm{S}) = \frac{F(\rm{S)}+1}{2} = abc - \frac{ab+ac+bc-1}{2}$.
\end{enumerate}
\end{prop}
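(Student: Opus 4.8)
The plan is to compute the Apéry set $\Ap({\rm S}; ab)$ of ${\rm S}$ with respect to its multiplicity $ab$, and to read off all three claims from it. The starting point is the set of relations $c\cdot(ab) = b\cdot(ac) = a\cdot(bc) = abc$ among the generators. These allow the following reduction: given any $s \in {\rm S}$, write $s = x(ab)+y(ac)+z(bc)$ with $x,y,z \in \mb{N}$; whenever $y \ge b$ we trade $b$ copies of $ac$ for $c$ copies of $ab$, and whenever $z \ge a$ we trade $a$ copies of $bc$ for $c$ copies of $ab$. Iterating, every $s \in {\rm S}$ admits a representation $s = x(ab) + y(ac) + z(bc)$ with $x \ge 0$, $0 \le y \le b-1$ and $0 \le z \le a-1$.

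I would then show that the $ab$ elements $w_{y,z} := y(ac)+z(bc)$ with $0 \le y \le b-1$ and $0 \le z \le a-1$ lie in pairwise-distinct residue classes modulo $ab$. This is the crux of the argument, and it follows from the Chinese Remainder Theorem: reducing $w_{y,z}$ modulo $a$ gives $bcz$, and since $\gcd(bc,a)=1$ this determines $z$; reducing modulo $b$ gives $acy$, and since $\gcd(ac,b)=1$ this determines $y$; as $\gcd(a,b)=1$ the pair $(y \bmod b, z \bmod a)$ is recovered from $w_{y,z} \bmod ab$. Combined with the reduction above --- which shows that any $s$ congruent to $w_{y,z}$ modulo $ab$ satisfies $s \ge w_{y,z}$ --- this proves that $w_{y,z}$ is the least element of ${\rm S}$ in its class, so that $\Ap({\rm S};ab) = \{w_{y,z} : 0\le y \le b-1,\ 0 \le z \le a-1\}$.

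With the Apéry set in hand the three statements are immediate. Since $F({\rm S}) = \max \Ap({\rm S};ab) - ab$ and the maximum is attained at $(y,z)=(b-1,a-1)$, one gets $F({\rm S}) = (b-1)(ac)+(a-1)(bc) - ab = 2abc-(ab+ac+bc)$, proving (1). For (2), I would verify that the reflection $w \mapsto (F({\rm S})+ab) - w$ is an involution of $\Ap({\rm S};ab)$: indeed $F({\rm S})+ab = (b-1)(ac)+(a-1)(bc)$, so $(F({\rm S})+ab) - w_{y,z} = w_{\,b-1-y,\,a-1-z}$, which again lies in the Apéry set; this is the standard criterion for symmetry. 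Finally (3) follows formally from (1) and (2): by the paper's definition of symmetry the involution $x \mapsto F-x$ pairs each gap of ${\rm S}$ with a non-gap in $\{0,1,\dots,F\}$, so exactly half of these $F+1$ integers are gaps, whence $g({\rm S}) = \frac{F({\rm S})+1}{2} = abc-\frac{ab+ac+bc-1}{2}$. As an alternative to the Apéry computation one can instead recognize ${\rm S}$ as the gluing of $\langle b,c\rangle$ (scaled by $a$) with $\mb{N}$ (scaled by $bc$), since $\gcd(a,bc)=1$ and $bc \in \langle b,c\rangle$, and then invoke the standard gluing formulas $F({\rm S}) = aF(\langle b,c\rangle) + abc - bc$ together with the fact that a gluing of symmetric semigroups is symmetric. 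The main obstacle in either approach is the clean identification of the Apéry set, respectively the verification of the gluing hypotheses; everything downstream is routine arithmetic.
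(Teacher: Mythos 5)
Your proof is correct, but there is nothing in the paper to compare it against: the paper states Proposition~\ref{supersym_prop} as recalled background (``We recall some basic facts about these numerical semigroups''), with no proof given, these being classical facts about supersymmetric semigroups going back to \cite{FGH}. Your argument is a sound, self-contained verification. The normal form obtained from the trades $b(ac)=c(ab)$ and $a(bc)=c(ab)$, together with the Chinese Remainder Theorem argument showing that the $ab$ elements $w_{y,z}=y(ac)+z(bc)$, $0\le y\le b-1$, $0\le z\le a-1$, occupy distinct residue classes modulo $ab$, correctly identifies $\Ap({\rm S};ab)$; then (1) follows from $F({\rm S})=\max \Ap({\rm S};ab)-ab$, (2) follows from the involution $w_{y,z}\mapsto w_{b-1-y,\,a-1-z}$ via the standard Ap\'ery-set criterion for symmetry, and (3) follows from pairing $x\leftrightarrow F-x$ on $\{0,1,\dots,F\}$. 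It is worth noting that your mechanism is essentially the same one the paper deploys later in the section: Propositions~\ref{unique_ab_ac_bc} and \ref{not_in_ab_ac_bc} are the ``all integers'' version of your normal-form/CRT computation (there with respect to the modulus $abc$ and the bound $z\ge 0$, rather than the modulus $ab$), and Lemma~\ref{simplex_lem} consumes precisely the symmetry statement you prove. Your alternative route via gluing is also legitimate: ${\rm S}=a\langle b,c\rangle + bc\,\mathbb{N}$ with $\gcd(a,bc)=1$ and $bc\in\langle b,c\rangle$, so the standard formulas give $F({\rm S})=aF(\langle b,c\rangle)+abc-bc=2abc-(ab+ac+bc)$, and symmetry follows since $\langle b,c\rangle$ and $\mathbb{N}$ are symmetric and gluings of symmetric semigroups are symmetric; this buys brevity at the cost of invoking heavier standard machinery, whereas your Ap\'ery computation is elementary and matches the paper's later toolkit.
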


\subsection{Supersymmetric Severi varieties of excess dimension}

In this subsection, we study generalized Severi varieties $M^3_{d,g;{\rm S},{\bf r}}$ associated with ${\rm S}=\langle ab,bc,ac \rangle$ and ${\bf r}=(ab,ac,bc)$.  In recent work \cite{CLMR}, Cotterill, Lima, Martins, and Reis have computed the dimension of this variety building on a conjectural combinatorial model for Severi dimensions introduced in \cite{CLM}.
\begin{thm}\cite[Theorem 4.2]{CLMR}
Assume that $d \ge 2g(\rm{S})$; then
\[
{\rm cod}(M^3_{d,g;{\rm S},{\bf r}},M^3_d) = 2\rho(abc)+ab+ac+bc-7,
\]
where $\rho(abc)$ denotes the number of gaps in $\rm{S}$ strictly larger than $abc$.
\end{thm}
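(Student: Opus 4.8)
The plan is to reduce the statement to a purely combinatorial count of the conditions \emph{beyond ramification} and then evaluate that count using the supersymmetric structure of ${\rm S}$. Recall from the discussion opening Section~\ref{generic_arithmetic_semigroups} that, once $d$ is large (here $d \ge 2g({\rm S})$ suffices), the codimension of $M^3_{d,g;{\rm S},{\bf r}}$ in $M^3_d$ equals the total number of conditions $r_P+b_P-1$, where $r_P=\sum_{i=1}^3(r_i-i)$ counts the linear ramification conditions and $b_P$ counts the algebraically independent conditions beyond ramification. For ${\bf r}=(ab,ac,bc)$,
\[
r_P=(ab-1)+(ac-2)+(bc-3)=ab+ac+bc-6,
\]
so that ${\rm cod}(M^3_{d,g;{\rm S},{\bf r}},M^3_d)=ab+ac+bc-7+b_P$. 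Comparing with the asserted formula, the theorem is equivalent to the identity $b_P=2\rho(abc)$, and the hypothesis $d\ge 2g({\rm S})$ places us in the stable range where the deformation-theoretic count is unobstructed and the codimension is computed exactly by this local contribution.

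Next I would invoke the factorization-graph model of \cite{CLM,Ch} to interpret $b_P$. Writing $f_i(t)$ for a parameterization with $v_t(f_i)=r_i$ and otherwise generic coefficients, the value semigroup of $\mb{C}[\![f_1,f_2,f_3]\!]$ is generically an \emph{enlargement} of ${\rm S}$: coincidences among factorizations (as in \eqref{diophantine}) force leading-term cancellations that adjoin new elements, and $b_P$ is the number of independent coefficient conditions needed to suppress exactly these enlargements so that the value semigroup equals ${\rm S}$. The decisive feature is that ${\rm S}=\langle ab,ac,bc\rangle$ is supersymmetric, hence has the \emph{single} Betti element $abc$, whose factorization graph is the totally disconnected graph on the three vertices $(c,0,0),(0,b,0),(0,0,a)$; equivalently, the toric ideal is the complete intersection $(x_1^c-x_2^b,\,x_2^b-x_3^a)$, with relation lattice of rank two. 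I would argue that every condition beyond ramification is localized at this Betti element: the two independent relations are the only sources of leading-term cancellation, and generic higher-order coefficients propagate them so as to fill the successive gaps of ${\rm S}$ lying above $abc$. Forcing a given gap $s>abc$ to \emph{remain} a gap then imposes one condition per independent relation, i.e.\ $(3-1)=2$ conditions, which accounts for the factor of $2$.

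The remaining step is to identify the controlled gaps as exactly $\{s\in\mb{N}\setminus{\rm S}:s>abc\}$, of cardinality $\rho(abc)$, with no contribution from below $abc$. Here I would use the symmetry of ${\rm S}$ from Proposition~\ref{supersym_prop}: since $F({\rm S})=2abc-(ab+ac+bc)$ and exactly one of $s,\,F-s$ lies in ${\rm S}$, the gaps in $(abc,F]$ are in bijection (via $s\mapsto F-s$) with the semigroup elements in $[0,\,abc-(ab+ac+bc))$. This both pins the controlled gaps to the interval above $abc$ and certifies that $abc=c\cdot ab\in{\rm S}$, together with everything below it, is realized without conditions (consistently, when $abc\le ab+ac+bc$ the bijection set is empty and $b_P=0$). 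This is also where the advertised lattice-simplex picture enters: the elements of ${\rm S}$ below $abc-(ab+ac+bc)$ are counted by lattice points of the simplex with vertices $(0,0,0),(a,0,0),(0,b,0),(0,0,c)$, giving the asymptotics used later.

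The main obstacle is the middle step: rigorously establishing that $b_P$ localizes at the Betti element with multiplicity equal to the number of disconnected factorization components minus one, and that the resulting conditions on the higher-order coefficients are genuinely algebraically independent. This requires a generic-determinant argument in the spirit of Lemma~\ref{valuation_bound_bis}, showing that the coefficient matrices recording ``$s$ stays a gap'' have maximal rank, so that each controlled $s$ cuts the dimension by exactly $2$ and these cuts are transverse. The delicate point is to control the two relations as they propagate past several successive gaps and to rule out accidental coincidences that would merge conditions; this is precisely where the pairwise coprimality of $a,b,c$ and the complete-intersection structure of ${\rm S}$ must be used decisively.
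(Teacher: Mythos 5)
The first thing to note is that this statement is not proved in the paper at all: it is quoted verbatim from \cite[Theorem 4.2]{CLMR} and used as a black box (the paper's own contribution begins with Corollary~\ref{severi_cor}, which merely compares the quoted formula against the codimension of the nodal locus). So there is no internal proof to measure your attempt against; the relevant comparison is with the proof in \cite{CLMR}, which, as the paper itself indicates just before the theorem, proceeds by certifying a combinatorial model for Severi dimensions introduced in \cite{CLM} rather than by the direct deformation-theoretic count you sketch.

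Your opening reduction is correct and consistent with the framework recalled at the start of Section~\ref{generic_arithmetic_semigroups}: with $r_P=ab+ac+bc-6$, the theorem is equivalent to the identity $b_P=2\rho(abc)$. But the remainder of the proposal does not prove this identity; it restates it. The assertions that the conditions beyond ramification localize at the unique Betti element $abc$, that each gap $s>abc$ contributes exactly one condition per independent relation (hence two, by the complete-intersection structure of $\langle ab,ac,bc\rangle$), and that all $2\rho(abc)$ resulting conditions are algebraically independent --- taken together, these \emph{are} the statement $b_P=2\rho(abc)$. Your heuristic for the factor of $2$ is reasonable, but you explicitly defer the decisive step to an unproven ``generic-determinant argument in the spirit of Lemma~\ref{valuation_bound_bis},'' and that lemma cannot do this job: it bounds the $t$-adic valuation of a single linear combination of monomials, whereas here one must control the dimension of the locus of parameterizations whose value semigroup is \emph{exactly} ${\rm S}$, i.e., show that the ``$s$ stays a gap'' conditions at the successive gaps above $abc$ cut the dimension by exactly two each, transversally, with no coincidences among the conditions generated by the two relations as they propagate upward, and no contribution from any other source. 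That is precisely the content of \cite[Theorem 4.2]{CLMR}, and it is missing here; what you have is a plausible outline of why the formula should hold, not a proof of it.
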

Comparing this formula 
against
the codimension of $g$-nodal rational curves of degree $d$ in $\mathbb{P}^3$ yields the following corollary.\footnote{Recall that the codimension of the $g$-nodal locus in $M^n_d$ is $(n-2)g$ whenever $d$ is sufficiently large relative to $g$.}
\begin{coro}\label{severi_cor}
Assume $d \geq 2g$. The Severi variety $M^3_{d,g;{\rm S},{\bf r}}$ is excess-dimensional whenever
\begin{equation}\label{rhobound1}
\rho(abc)< \frac{abc}{2}- \frac{3}{4}(ab+ac+bc)+ \frac{15}{4}.
\end{equation}
\end{coro}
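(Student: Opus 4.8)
The plan is to unwind the definition of excess-dimensionality and substitute the two closed formulas already at our disposal. By definition, $M^3_{d,g;{\rm S},{\bf r}}$ is excess-dimensional precisely when its codimension inside $M^3_d$ is strictly less than $(n-2)g = g$, the codimension of the $g$-nodal locus; this comparison is legitimate here because the standing hypothesis $d \geq 2g$ guarantees that $d$ is sufficiently large relative to $g$ (see the footnote). Thus the entire task reduces to rewriting the single inequality
\[
{\rm cod}(M^3_{d,g;{\rm S},{\bf r}},M^3_d) < g({\rm S}).
\]

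First I would substitute the codimension formula from \cite[Theorem 4.2]{CLMR}, namely $2\rho(abc)+ab+ac+bc-7$, on the left-hand side, and the genus formula from Proposition~\ref{supersym_prop}(3), namely $abc - \frac{ab+ac+bc-1}{2}$, on the right-hand side. This yields
\[
2\rho(abc)+(ab+ac+bc)-7 < abc - \frac{ab+ac+bc-1}{2}.
\]
Second, I would isolate $2\rho(abc)$ and collect the $ab+ac+bc$ terms. Writing $T = ab+ac+bc$ for brevity, moving $T-7$ to the right transforms that side into $abc - \frac{T-1}{2} - T + 7 = abc - \frac{3T}{2} + \frac{15}{2}$. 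Dividing through by $2$ then reproduces exactly the claimed bound \eqref{rhobound1}.

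The argument is a direct algebraic manipulation, so there is no substantive obstacle; the only point requiring care is bookkeeping the additive constants—specifically, tracking how the $-7$ from the codimension formula combines with the $+\frac{1}{2}$ contributed by the half-integer genus term—so that the final constant emerges as $\frac{15}{4}$ rather than a neighboring value. One should also confirm that the coprimality hypotheses needed to invoke Proposition~\ref{supersym_prop}(3) (and hence the symmetry underlying that genus formula) are in force, which they are by the standing assumptions $2 \le a < b < c$ pairwise relatively prime at the head of the section.
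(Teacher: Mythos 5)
Your proposal is correct and is essentially the paper's own argument: the paper states the corollary as an immediate consequence of comparing the codimension formula $2\rho(abc)+ab+ac+bc-7$ from \cite[Theorem 4.2]{CLMR} with the $g$-nodal codimension $(n-2)g = g$, using $g({\rm S}) = abc - \frac{ab+ac+bc-1}{2}$ from Proposition~\ref{supersym_prop}. Your algebra (isolating $2\rho(abc)$ and dividing by $2$ to obtain the constant $\frac{15}{4}$) is exactly the computation the paper leaves implicit.
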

\noindent We will show that \eqref{rhobound1} holds for most triples $(a,b,c)$, in order to deduce the following.
\begin{thm}\label{excess_thm}
Let $2 \le a< b< c$ be pairwise relatively prime integers, let  ${\rm S}= {\rm S}(a,b,c) := \langle ab,ac,bc\rangle$, and assume that $d \ge 2g(\rm{S})$. The Severi variety $M^3_{d,g;{\rm S},{\bf r}}$ is excess-dimensional whenever $4 \le a < b < c$ and $(a,b,c) \neq (4,5,7)$.
\end{thm}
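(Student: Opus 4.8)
The plan is to verify the sufficient condition \eqref{rhobound1} of Corollary~\ref{severi_cor}, so that everything reduces to an upper bound on $\rho(abc)$, the number of gaps of ${\rm S}$ strictly larger than $abc$. The first move is to trade large gaps for small semigroup elements using the symmetry recorded in Proposition~\ref{supersym_prop}. Since $F({\rm S}) = 2abc-(ab+ac+bc)$ and ${\rm S}$ is symmetric, the involution $x \mapsto F-x$ interchanges gaps and non-gaps; applying it to the interval $(abc,F]$ identifies
\[
\rho(abc) = \#\bigl\{s \in {\rm S} : 0 \le s < abc-(ab+ac+bc)\bigr\}.
\]

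Next I would convert this into a lattice-point count. Because $a,b,c$ are pairwise coprime, ${\rm S}=\langle ab,ac,bc\rangle$ has a single Betti element, namely $abc = c\cdot ab = b\cdot ac = a\cdot bc$; consequently every $s\in{\rm S}$ with $s<abc$ has a unique factorization in $ab,ac,bc$. As the elements counted above all lie below $abc$, this count equals $\#\{(i,j,k)\in\Z_{\ge 0}^3 : iab+jac+kbc < abc-(ab+ac+bc)\}$, and after the substitution $(i,j,k)\mapsto(i+1,j+1,k+1)$,
\[
\rho(abc) = \#\bigl\{(i,j,k)\in\Z_{\ge 1}^3 : iab+jac+kbc < abc\bigr\}.
\]
These are exactly the interior lattice points of the tetrahedron $T = \operatorname{conv}\{(0,0,0),(c,0,0),(0,b,0),(0,0,a)\}$, whose volume is $abc/6$; this is the promised realization of $\rho(abc)$ as a discrete volume of a lattice simplex.

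I would then bound the number of interior lattice points of $T$ from above. Slicing $T$ by the planes $k=1,\dots,a-1$ reduces the problem to counting, in each slice, the positive lattice points of a planar triangle with legs $\tfrac{c(a-k)}{a}$ and $\tfrac{b(a-k)}{a}$; a square-packing argument bounds each such count by the area of the triangle, and finer bookkeeping removes the boundary deficit forced by the constraints $i,j\ge 1$ (alternatively one may invoke Ehrhart--Macdonald reciprocity for the lattice polytope $T$, writing the interior count as $-L_T(-1)$ and estimating the subleading Ehrhart coefficients through the normalized facet areas and edge terms). Either route yields a bound of the shape $\rho(abc)\le \tfrac{abc}{6} - (\text{positive lower-order terms in } bc \text{ and } b+c)$. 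Substituting this into \eqref{rhobound1} and clearing denominators reduces the theorem to an explicit polynomial inequality in $a,b,c$, valid for all $a\ge 4$ outside a finite list of small triples, which I would then dispatch by computing $\rho(abc)$ directly.

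The main obstacle is the sharpness of the lattice-point estimate for small $a$. The asymptotic margin in \eqref{rhobound1} is governed by $\tfrac{abc}{3}$ against $\tfrac34(ab+ac+bc) = \tfrac34\,abc\bigl(\tfrac1a+\tfrac1b+\tfrac1c\bigr)$, which is comfortable once $\tfrac1a+\tfrac1b+\tfrac1c<\tfrac49$ but razor-thin precisely when $a=4$ (and borderline when $a=5$). There the crude volume bound $\rho(abc)\le abc/6$ is too weak, and one must retain the boundary savings reflecting that interior points require all coordinates $\ge 1$ -- a saving that sharply suppresses the count for the ``thin'' simplices with $a$ small and $c$ large. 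Calibrating these lower-order terms finely enough that the only triple left uncertified is $(4,5,7)$ is the delicate heart of the argument, and $(4,5,7)$ is exactly the triple for which the available bound fails to force \eqref{rhobound1}.
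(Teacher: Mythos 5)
Your reduction is the same as the paper's, and it is carried out correctly: you invoke Corollary~\ref{severi_cor}, use the symmetry of ${\rm S}$ (Proposition~\ref{supersym_prop}) to trade gaps above $abc$ for semigroup elements below $abc-(ab+ac+bc)$, use uniqueness of factorizations below the Betti element $abc$, and shift by $(1,1,1)$ to identify $\rho(abc)$ with the number of interior lattice points of the lattice tetrahedron with vertices $(0,0,0),(c,0,0),(0,b,0),(0,0,a)$. This is exactly the content of Lemma~\ref{simplex_lem} combined with Yau--Zhang's observation $Q_{(\alpha,\beta,\gamma)}=P_{(\alpha(1+\eta),\beta(1+\eta),\gamma(1+\eta))}=P_{(c,b,a)}$.

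The gap is in the step that actually carries the theorem: the sharp upper bound on that lattice-point count. The paper imports Theorem~\ref{strongYZ} (Yau--Zhang), which gives precisely $\rho(abc)\le \tfrac{(a-1)(b-1)(c-1)}{6}$, and then reduces \eqref{rhobound1} to the explicit polynomial inequality $F(a,b,c)\ge 0$ of \eqref{bound_eq}, settled by a monotonicity argument in $c$, $b$, $a$ (with the $a=4$ family treated separately, which is where $(4,5,7)$ drops out). You replace this published theorem with a sketch --- ``square-packing plus finer bookkeeping,'' or alternatively ``estimating the subleading Ehrhart coefficients'' --- but you never state, let alone prove, the quantitative bound you need; your own closing paragraph concedes that calibrating these lower-order terms is ``the delicate heart of the argument,'' i.e.\ that it is left undone. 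This is not a cosmetic omission: the Ehrhart route requires controlling the linear coefficient of the Ehrhart polynomial of these tetrahedra, which involves Dedekind sums, and the packing route must be executed slice by slice with explicit error control. Moreover, your endgame is quantitatively wrong as stated: with only the volume-type bound $\rho(abc)\le\tfrac{abc}{6}$ (essentially Proposition~\ref{weakYZ}), condition \eqref{rhobound1} amounts roughly to $\tfrac1a+\tfrac1b+\tfrac1c<\tfrac49$, which fails for \emph{every} triple $(4,5,c)$; so the uncertified cases are not ``a finite list of small triples'' that one can dispatch by direct computation of $\rho(abc)$ --- they contain an infinite family. Closing it requires a bound whose savings below $\tfrac{abc}{6}$ are of order $bc$ with a sufficiently good constant, uniformly in $c$, which is exactly what the cited bound $\tfrac{(a-1)(b-1)(c-1)}{6}$ supplies and what your proposal leaves unproven.
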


A key ingredient here is an upper bound for $\rho(abc)$, obtained by re-interpreting it as the number of lattice points in a certain right-angled rational simplex in $\mathbb{R}^3$.
\begin{lemma}\label{simplex_lem}
Let $2 \le a< b< c$ be pairwise relatively prime integers and let  ${\rm S}= {\rm S}(a,b,c) := \langle ab,ac,bc\rangle$.  Let $\rho(abc)$ denote the number of gaps in $\rm{S}$ strictly larger than $abc$.  Then $\rho(abc)$ is equal to the number of lattice points in the rational simplex $\Delta(\alpha,\beta,\gamma)$ with vertices $(0,0,0), (\alpha,0,0), (0,\beta,0), (0,0,\gamma)$ where 
\[
\al=c-1 - \frac{c}{a} -\frac{c}{b}, \be=b-1 - \frac{b}{a} - \frac{b}{c}, \text{ and } \ga=a-1 - \frac{a}{b} - \frac{a}{c}.
\]
\end{lemma}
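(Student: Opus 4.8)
The plan is to reduce the count $\rho(abc)$ to an explicit lattice-point count by first computing the Apéry set of ${\rm S}$ with respect to $abc$, and then reading off how many gaps exceed $abc$ in each residue class modulo $abc$. The key structural input is that, because $a,b,c$ are pairwise coprime, the element $abc = c\cdot ab = b\cdot ac = a\cdot bc$ lies in ${\rm S}$, and the Chinese Remainder Theorem controls residues: reducing $n = x\cdot ab + y\cdot ac + z\cdot bc$ modulo $a$, $b$, and $c$ pins down $z \bmod a$, $y \bmod b$, and $x \bmod c$ respectively, since $\gcd(bc,a)=\gcd(ac,b)=\gcd(ab,c)=1$.

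First I would establish
\[
\Ap({\rm S};abc)=\{x\cdot ab+y\cdot ac+z\cdot bc:\ 0\le x<c,\ 0\le y<b,\ 0\le z<a\}.
\]
The $abc$ triples $(x,y,z)$ in these ranges produce $abc$ \emph{distinct} residues modulo $abc$ by CRT, hence one representative per class; a monotonicity argument then shows each is minimal in its class. Indeed, if $n'=x'ab+y'ac+z'bc\in{\rm S}$ is congruent to such an element, then $x'\equiv x \pmod c$, $y'\equiv y \pmod b$, $z'\equiv z\pmod a$, forcing $x'\ge x$, $y'\ge y$, $z'\ge z$ and therefore $n'\ge x\cdot ab+y\cdot ac+z\cdot bc$. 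Writing $w_r$ for the Apéry element in class $r$, every element of ${\rm S}$ in class $r$ equals $w_r+k\cdot abc$ for some $k\ge 0$ (using $abc\in{\rm S}$), so the gaps in class $r$ are exactly $r, r+abc,\dots,w_r-abc$, of which precisely $\max(0,\lfloor w_r/abc\rfloor-1)$ are strictly larger than $abc$.

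Summing over classes and using the identity $w_r/abc = x/c+y/b+z/a$, this yields
\[
\rho(abc)=\#\Big\{(x,y,z):0\le x<c,\ 0\le y<b,\ 0\le z<a,\ \tfrac{x}{c}+\tfrac{y}{b}+\tfrac{z}{a}\ge 2\Big\},
\]
since the fractional sum lies in $[0,3)$, so its floor is $\ge 2$ exactly when the sum is $\ge 2$. The final step is the reflection $x\mapsto c-1-x$, $y\mapsto b-1-y$, $z\mapsto a-1-z$, a bijection on the product of ranges that turns the condition into $\tfrac{x}{c}+\tfrac{y}{b}+\tfrac{z}{a}\le 1-\tfrac1a-\tfrac1b-\tfrac1c$; clearing denominators by $abc$ gives $x\cdot ab+y\cdot ac+z\cdot bc\le abc-ab-ac-bc$. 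Setting $D=abc-ab-ac-bc$, one checks $\alpha=D/ab$, $\beta=D/ac$, $\gamma=D/bc$, so the simplex inequality $\tfrac{x}{\alpha}+\tfrac{y}{\beta}+\tfrac{z}{\gamma}\le 1$ is literally $x\cdot ab+y\cdot ac+z\cdot bc\le D$. Moreover the upper-range constraints become automatic, since $x\ge c$ already forces $x\cdot ab\ge abc>D$ (and likewise for $y,z$); hence the reflected count is exactly the number of lattice points of $\Delta(\alpha,\beta,\gamma)$.

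I expect the main obstacle to be the Apéry-set computation, specifically the verification that the CRT representatives are genuinely the \emph{minimal} elements of their classes (the monotonicity step) together with the bookkeeping showing that each class contributes $\max(0,\lfloor w_r/abc\rfloor-1)$ gaps above $abc$. Once this residue-class picture is in place, the reduction to the count of triples with $\tfrac{x}{c}+\tfrac{y}{b}+\tfrac{z}{a}\ge 2$, the reflection, and the clearing of denominators identifying the count with $\Delta(\alpha,\beta,\gamma)$ are all routine.
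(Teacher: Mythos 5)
Your proof is correct, and it takes a genuinely different route from the paper's. The paper argues via a two-step bijection: since ${\rm S}$ is symmetric with Frobenius number $F({\rm S})=2abc-(ab+ac+bc)$ (Proposition~\ref{supersym_prop}, quoted without proof), the involution $x\mapsto F({\rm S})-x$ matches the gaps of ${\rm S}$ above $abc$ with the elements of ${\rm S}$ strictly below $D:=abc-(ab+ac+bc)$; and since every element of ${\rm S}$ below $abc$ factors uniquely, those elements correspond exactly to the lattice points of $\Delta(\al,\be,\ga)$. You never invoke symmetry or the Frobenius number: you compute $\Ap({\rm S};abc)$ by CRT, count the gaps above $abc$ in each residue class as $\max(0,\lfloor w_r/abc\rfloor-1)$ for the Ap\'ery element $w_r$ of that class, and finish by reflecting the Ap\'ery box $[0,c)\times[0,b)\times[0,a)$ through its center. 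That reflection is precisely where the symmetry of ${\rm S}$ is hiding (reflecting the Ap\'ery set is equivalent to the symmetry condition on ${\rm S}$), so your argument is self-contained: it in effect re-proves the facts of Proposition~\ref{supersym_prop} that the paper cites, and your CRT analysis parallels the $(ab,ac,bc)$-factorization statements the paper establishes later for other purposes (Propositions~\ref{unique_ab_ac_bc}--\ref{not_in_ab_ac_bc}). The trade-off: the paper's proof is much shorter given the quoted facts, while yours is longer but elementary, and it is also cleaner on one boundary point the paper glosses over --- the symmetry bijection yields elements \emph{strictly} less than $D$, whereas $\Delta(\al,\be,\ga)$ is cut out by the non-strict inequality $u\,ab+v\,ac+w\,bc\le D$, and the two counts agree only because $D=F({\rm S})-abc\notin{\rm S}$; your reflection lands on the non-strict inequality directly, so no such remark is needed.
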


\begin{proof}
According to Proposition \ref{supersym_prop}, $\rm{S}$ is symmetric; it follows that gaps of ${\rm S}$ greater than $abc$ are in bijection with elements of ${\rm S}$ less than $F({\rm S})-abc= abc- (ab+ac+bc)$. Accordingly, 
$\rho(abc) = \#\{x \in {\rm S}\colon x < abc-(ab+ac+bc)\}$.
On the other hand, the {\it factorization map} $\varphi\colon \mathbb{Z}_{\ge 0}^3 \rightarrow \mathbb{Z}_{\ge 0}$ given by
$\varphi(u,v,w) = u ab + v ac + w bc$
defines a bijection between triples $(u,v,w) \in \mathbb{Z}_{\ge 0}^3$ and factorizations of elements of ${\rm S}$.  It is easy to check that every element $s < abc$ in ${\rm S}$ has a unique factorization.
Since the lattice points of $\Delta(\al,\be,\ga)$ index factorizations of elements of ${\rm S}$ less than $abc-(ab+ac+bc)$, there are precisely $\rho(abc)$ of these. 
\end{proof}

Recall that we require $2\le a< b<c$ to be pairwise relatively prime. Note that $\gamma < 1$ if and only if $a = 2$, or $a = 3$ and $(b,c) \in\{(4,5),(4,7),(4,11),(5,7)\}$.  In these cases, any lattice point in 
$\Delta(\al,\be,\ga)$ must have final coordinate equal to $0$; so $\rho(abc)$ is equal to the number of lattice points in the rational triangle in the plane with vertices $(0,0),(\alpha,0),(0,\beta)$.  We  
may bound the number of lattice points in this triangle from above by finding a triangle with vertices at lattice points in which it is contained and then applying Pick's theorem; we leave the details to the reader.  
For simplicity, we suppose that $a\ge 4$ for the remainder of this section.

\medskip
Lemma \ref{simplex_lem} produces an upper bound for $\rho(abc)$ 
when used in tandem with a theorem of Yau and Zhang \cite{YZ}.  
Given real numbers $a_1 \ge a_2 \ge a_3 \ge 1$, let $P_{(a_1,a_2,a_3)}$ (resp., $Q_{(a_1,a_2,a_3)}$) denote the number of strictly positive (resp., nonnegative) integral triples $(x_1,x_2,x_3)$ for which 
$\frac{x_1}{a_1} + \frac{x_2}{a_2} + \frac{x_3}{a_3} \le 1$.
According to Lemma \ref{simplex_lem}, we have  
$\rho(a,b,c) = Q_{(\alpha,\beta,\gamma)}$.  Now suppose that we are not in one of the exceptional cases 
of the preceding paragraph
in which 
$\gamma < 1$.  
Yau and Zhang note that attaching a unit cube to the right of and above each lattice point of $\Delta(\alpha,\beta,\gamma)$ 
produces the following upper bound.
\begin{prop}\cite[p. 912]{YZ}\label{weakYZ}
Suppose $\alpha \ge \beta \ge \gamma \ge 1$ are real numbers, and let $\eta = \frac{1}{\alpha} + \frac{1}{\beta} + \frac{1}{\gamma}$.  We have
$
Q_{(\alpha,\beta,\gamma)} \le \frac{\alpha\beta\gamma}{6} (1+\eta)^3$.
\end{prop}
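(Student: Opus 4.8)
The statement to prove is Proposition~\ref{weakYZ}: that $Q_{(\alpha,\beta,\gamma)} \le \frac{\alpha\beta\gamma}{6}(1+\eta)^3$ where $\eta = \frac1\alpha + \frac1\beta + \frac1\gamma$. The plan is to exploit the geometric idea already flagged in the excerpt, namely attaching a unit cube to the upper-right of each lattice point of the closed simplex and comparing volumes. Concretely, $Q_{(\alpha,\beta,\gamma)}$ counts nonnegative integer triples $(x_1,x_2,x_3)$ with $\frac{x_1}{\alpha}+\frac{x_2}{\beta}+\frac{x_3}{\gamma}\le 1$; to each such lattice point $\mathbf{x}$ I associate the translated unit cube $\mathbf{x}+[0,1]^3$. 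These cubes have pairwise disjoint interiors and together have total volume exactly $Q_{(\alpha,\beta,\gamma)}$, so it suffices to bound this total volume by the volume of a region that contains the union of the cubes.

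The key step is to identify an explicit dilated simplex containing every such cube. First I would show that the union $\bigcup_{\mathbf{x}} (\mathbf{x}+[0,1]^3)$ is contained in the enlarged simplex
\[
\Delta' = \Big\{ \mathbf{y}\in\mathbb{R}^3_{\ge 0} : \frac{y_1}{\alpha}+\frac{y_2}{\beta}+\frac{y_3}{\gamma}\le 1+\eta \Big\}.
\]
To see the containment, take a lattice point $\mathbf{x}$ with $\frac{x_1}{\alpha}+\frac{x_2}{\beta}+\frac{x_3}{\gamma}\le 1$ and any point $\mathbf{y}=\mathbf{x}+\mathbf{t}$ with $\mathbf{t}\in[0,1]^3$. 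Then
\[
\frac{y_1}{\alpha}+\frac{y_2}{\beta}+\frac{y_3}{\gamma}
= \Big(\frac{x_1}{\alpha}+\frac{x_2}{\beta}+\frac{x_3}{\gamma}\Big)+\Big(\frac{t_1}{\alpha}+\frac{t_2}{\beta}+\frac{t_3}{\gamma}\Big)
\le 1 + \Big(\frac1\alpha+\frac1\beta+\frac1\gamma\Big) = 1+\eta,
\]
using $t_i\le 1$ for the second bracket. This is exactly the defining inequality of $\Delta'$, so the union of cubes lies inside $\Delta'$. Since the cubes have disjoint interiors,
\[
Q_{(\alpha,\beta,\gamma)} = \operatorname{vol}\Big(\bigcup_{\mathbf{x}}(\mathbf{x}+[0,1]^3)\Big) \le \operatorname{vol}(\Delta').
\]

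Finally I would compute $\operatorname{vol}(\Delta')$. The simplex $\Delta'$ is the right-angled simplex with legs of lengths $\alpha(1+\eta),\beta(1+\eta),\gamma(1+\eta)$ along the coordinate axes, so its volume is $\frac16\,\alpha(1+\eta)\cdot\beta(1+\eta)\cdot\gamma(1+\eta)=\frac{\alpha\beta\gamma}{6}(1+\eta)^3$. Combining with the inequality above yields $Q_{(\alpha,\beta,\gamma)}\le\frac{\alpha\beta\gamma}{6}(1+\eta)^3$, as claimed. I do not anticipate a serious obstacle here: the only point requiring mild care is confirming that the cubes attached to distinct lattice points have disjoint interiors (immediate, since distinct integer translates of $[0,1]^3$ overlap only on their boundaries) and that each cube really does fit inside $\Delta'$ rather than merely its vertex — but the displayed computation handles an arbitrary point of each cube, so that is covered. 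The hypothesis $\alpha\ge\beta\ge\gamma\ge 1$ is not strictly needed for this volume argument, though it guarantees $\eta\le 3$ and keeps the bound meaningful; the geometric containment works for any positive $\alpha,\beta,\gamma$.
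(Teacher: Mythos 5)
Your proof is correct and is precisely the argument the paper sketches: attaching a unit cube to each counted lattice point, observing the cubes pack with disjoint interiors into the dilated simplex cut out by $\frac{y_1}{\alpha}+\frac{y_2}{\beta}+\frac{y_3}{\gamma}\le 1+\eta$, and comparing with its volume $\frac{\alpha\beta\gamma}{6}(1+\eta)^3$. No gaps; this matches the paper's (Yau--Zhang's) approach exactly.
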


This upper bound implies Theorem \ref{excess_thm} for some triples $(a,b,c)$, but in order to deal with the remaining cases, we need a stronger upper bound for the number of lattice points in a rational simplex that is also due to Yau and Zhang.  Continuing with the notation above, Yau and Zhang note that $Q_{(\alpha,\beta,\gamma)} = P_{(\alpha(1+\eta),\beta(1+\eta),\gamma(1+\eta))}$.
\begin{thm}\cite[Theorem 1.1]{YZ}\label{strongYZ}
Let $\alpha\ge \beta\ge \gamma \ge 1$ be real numbers.  Then
\[
P_{(\alpha(1+\eta),\beta(1+\eta),\gamma(1+\eta))} \le \frac{(\alpha(1+\eta)-1)(\beta(1+\eta)-1)(\gamma(1+\eta)-1)}{6}.
\]
\end{thm}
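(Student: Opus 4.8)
The plan is to read the right-hand side as a Euclidean volume and then to sharpen the volume estimate underlying Proposition~\ref{weakYZ} by computing the deficit it leaves behind. Write $A=\alpha(1+\eta)\ge B=\beta(1+\eta)\ge C=\gamma(1+\eta)>1$, so that $\frac{(A-1)(B-1)(C-1)}{6}=\mathrm{Vol}\,\Delta(A-1,B-1,C-1)$, while $P_{(A,B,C)}$ counts the lattice points $(x,y,z)$ with $x,y,z\ge 1$ inside $\Delta(A,B,C)=\{(u,v,w)\colon \frac uA+\frac vB+\frac wC\le 1\}$. Attaching to each such point the unit cube $[x-1,x]\times[y-1,y]\times[z-1,z]$ yields disjoint cubes contained in $\Delta(A,B,C)$, since $(x,y,z)$ is the corner of the cube maximising $\frac uA+\frac vB+\frac wC$; this recovers the crude bound $P_{(A,B,C)}\le \mathrm{Vol}\,\Delta(A,B,C)=\frac{ABC}{6}$ of Proposition~\ref{weakYZ}.

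First I would stress why no formal trick upgrades $\frac{ABC}{6}$ to $\frac{(A-1)(B-1)(C-1)}{6}$. Cube-packings have density one, so rescaling or shearing the cubes cannot beat the volume; and, crucially, there is no induction on dimension of the naive kind, because the two-dimensional analogue of the inequality is false (already $P_2(4,4)=6>\tfrac{(4-1)(4-1)}{2}=\tfrac92$). The three-dimensional geometry must therefore be exploited directly, through the region that the cubes fail to cover.

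Accordingly, I would study the deficit $\mathrm{Vol}\,\Delta(A,B,C)-P_{(A,B,C)}$, which is exactly the volume of the staircase shell of $\Delta(A,B,C)$ left uncovered by the cubes: a point $(u,v,w)\in\Delta(A,B,C)$ is uncovered precisely when rounding its coordinates up crosses the slanted facet, i.e. when $\frac{\lceil u\rceil}{A}+\frac{\lceil v\rceil}{B}+\frac{\lceil w\rceil}{C}>1$. Summing the cube--facet intersection volumes over the lattice points of this shell --- equivalently, evaluating $P_{(A,B,C)}$ exactly by slicing perpendicular to a coordinate axis and summing the resulting planar counts --- reduces the theorem to the single inequality
\[
\mathrm{Vol}\,\Delta(A,B,C)-P_{(A,B,C)}\ \ge\ \frac{AB+AC+BC-(A+B+C)+1}{6}.
\]

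The hard part will be this final estimate, and it is delicate precisely because the \emph{leading} term of the deficit already equals the required $\frac16(AB+AC+BC)$ exactly. This is visible in the diagonal case $A=B=C=s$, where the deficit is exactly $\frac{s^2}{2}-\frac{s}{3}=\frac16(AB+AC+BC)-\frac{s}{3}$, so that the whole theorem rests on the \emph{sub-leading} terms: one must show the deficit's linear correction dominates the required $-\frac16(A+B+C)$, the surplus being only the linear quantity $\frac{s-1}{6}$ in the diagonal case, which underscores how little room the estimate leaves. The real work is thus the lower-order bookkeeping of the shell near the \emph{edges} and \emph{vertices} of $\Delta$, where rounding up ejects a point in two or three coordinates at once and an inclusion--exclusion correction must be controlled; it is here that the nondegeneracy hypothesis $\frac1A+\frac1B+\frac1C=\frac{\eta}{1+\eta}<1$ should enter, to keep the corner corrections from overwhelming the facet gain. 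Should the direct accounting prove too intricate, a reasonable fallback is to prove that $P_{(A,B,C)}-\frac{(A-1)(B-1)(C-1)}{6}$ is monotone in each parameter and thereby reduce to extremal, essentially integer, configurations amenable to an exact Ehrhart-type evaluation.
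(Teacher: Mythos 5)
You have not proved the theorem, and it is worth noting at the outset that the paper does not prove it either: Theorem~\ref{strongYZ} is imported verbatim from Yau--Zhang (\cite[Theorem 1.1]{YZ}), so the ``proof'' in the paper is the citation, and your blind attempt is really an attempt to reprove their result. What you do get right is valuable as far as it goes: (i) the cube-packing argument correctly recovers the crude bound $P_{(A,B,C)}\le \mathrm{Vol}\,\Delta(A,B,C)=\frac{ABC}{6}$, i.e.\ Proposition~\ref{weakYZ}; (ii) your two-dimensional counterexample $P_{(4,4)}=6>\tfrac{9}{2}$ is valid and even arises from admissible data ($\alpha=\beta=2$, $\eta=1$), so the result is genuinely three-dimensional; (iii) the algebraic reduction to the deficit inequality $\mathrm{Vol}\,\Delta(A,B,C)-P_{(A,B,C)}\ge \frac{AB+AC+BC-(A+B+C)+1}{6}$ is correct, as is your computation (for integer $s$) that on the diagonal $A=B=C=s$ the surplus is only $\frac{s-1}{6}$. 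But all of this is a reformulation of the theorem together with evidence that it is sharp; none of it is a proof.

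The genuine gap is the deficit inequality itself, which is the entire content of Yau--Zhang's theorem: your proposal stops exactly where their work begins, declaring that ``the real work is the lower-order bookkeeping'' near the edges and vertices of the simplex without carrying any of it out. That bookkeeping is not routine, precisely for the reason you identify: the required lower bound matches the deficit's quadratic term exactly, so every quadratic-order error produced by inclusion--exclusion at edges and vertices must be shown to cancel, and those corrections enter with unfavorable sign. Your fallback is also unavailable as stated: $P_{(A,B,C)}-\frac{(A-1)(B-1)(C-1)}{6}$ is \emph{not} monotone in any parameter --- it jumps up by positive integers at the thresholds where the slanted facet crosses a lattice point and decreases continuously in between --- so one cannot ``reduce to extremal, essentially integer configurations'' by monotonicity. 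The correct version of that idea (the supremum in each parameter is attained at critical values where the facet passes through a lattice point) is a standard reduction, but handling those critical configurations is itself a substantial part of Yau--Zhang's argument, not a shortcut around it. As written, your proposal proves only what Proposition~\ref{weakYZ} already gives.
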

There is a sharper upper bound due to Xu and Yau \cite{XY}, but 
it is more challenging to work with we do not need it for our application. 
Using 
Theorem \ref{strongYZ} we can complete the proof of Theorem \ref{excess_thm}.
\begin{proof}[Proof of Theorem \ref{excess_thm}]
Throughout this argument 
let $\alpha,\beta$, and $\gamma$ be defined as in Lemma \ref{simplex_lem}.  Simplifying the right-hand side of Theorem \ref{strongYZ} gives
\[
 \frac{(\alpha(1+\eta)-1)(\beta(1+\eta)-1)(\gamma(1+\eta)-1)}{6} 
 =
 \frac{abc - (ab+ac+bc)+(a+b+c)-1}{6}.
\]
Therefore, we need only show that if $4\le a < b< c$ are pairwise relatively prime positive integers and $(a,b,c) \neq (4,5,7)$, we have 
\[
\frac{abc - (ab+ac+bc)+(a+b+c)-1}{6} \le \frac{abc}{2}- \frac{3}{4}(ab+ac+bc)+ \frac{15}{4};
\]
or equivalently, that
\begin{equation}\label{bound_eq}
F(a,b,c) := \frac{abc}{3} - \frac{7}{12}(ab+ac+bc)-\frac{1}{6}(a+b+c) + \frac{47}{12} \ge 0.
\end{equation}

Differentiating with respect to $c$ shows that for $a\ge 4$ and $b \ge a+1$, $F(a,b,c)$ is an increasing function of $c$.  Therefore, its minimum value occurs when $c = b+1$.  Substituting $c = b+1$ and differentiating with respect to $b$ shows that when $a\ge 4$, $F(a,b,b+1)$ is an increasing function of $b$. 
Therefore, $F(a,b,b+1)$ is minimal when $b=a+1$.  We have $F(a,a+1,a+2) = \frac{1}{3} a^3- \frac{3}{4} a^2 -\frac{10}{3} a + \frac{9}{4}$. Differentiating with respect to $a$, we see that $F(a,a+1,a+2)$ is increasing in $a$ when $a \ge 4$.  We conclude that whenever $4 \le a < b< c$ are pairwise relatively prime positive integers, $F(a,a+1,a+2) \le F(a,b,c)$. Direct computation shows that $F(5,6,7) \ge 0$, so $F(a,b,c) \ge 0$ whenever $a \ge 5$.

\medskip
Finally, suppose $a=4$ and $b=5$. Note that $F(4,5,7)<0$, while $F(4,5,c) > 0$ for $c \ge 9$. Differentiating with respect to $c$ yields $F(4,b,c) \le F(4,b,b+2)$; while differentiating with respect to $b$ yields $F(4,7,9) \le F(4,b,b+2)$.  Direct computation shows that $F(4,7,9) \ge 0$, completing the proof.
\end{proof}

\medskip
The same circle of ideas yields a lower bound on the genus $g=g({\rm S}({\bf r}))$ of the generic semigroup ${\rm S}({\bf r})$ adapted to ${\bf r} = (ab,ac,bc)$, where $a,b,c$ are pairwise relatively prime positive integers.  Here
${\rm cod}(M^3_{d,g;{\rm S},{\bf r}},M^3_d) = ab+ac+bc-7$ whenever $d$ is sufficiently large. Our lower bound on $g({\rm S}({\bf r}))$ yields ${\rm cod}(M^3_{d,g;{\rm S}({\bf r}),{\bf r}},M^3_d) < g({\rm S}({\bf r}))$, and therefore that $M^3_{d,g;{\rm S}({\bf r}),{\bf r}}$ is excess-dimensional, in many cases.

\medskip
To begin, note that ${\rm S}({\bf r})$ clearly contains ${\rm S} = \langle ab,ac,bc\rangle$.  As $abc$ is the smallest nonzero element with more than one factorization in ${\rm S}$, we conclude that ${\rm S}({\bf r}) \cap [1,abc-1] = {\rm S} \cap [1,abc-1]$.  As a result, 
we have
$g({\rm S}({\bf r})) \ge \#\{x \in \mb{N} \setminus {\rm S}\colon x < abc\}$.
On the other hand, the fact that
\[
\#\{x \in {\rm S}\colon x < abc\} + \#\{x \in {\rm \mb{N} \setminus {\rm S}}\colon x < abc\} = abc
\]
implies that to bound $\#\{x \in \mb{N} \setminus {\rm S}\colon x < abc\}$ from below it suffices to bound $\#\{x \in {\rm S}\colon x < abc\}$ from above. More precisely, the corresponding Severi variety will be excess-dimensional whenever
\begin{equation}\label{rhobound2}
\#\{x \in {\rm S}\colon x < abc\}< abc-ab-ac-bc+7.
\end{equation}

As every nonzero element in ${\rm S}$ strictly less than $abc$ factors uniquely it follows, much as before, that the left-hand side of \eqref{rhobound2} is equal to three less than the number of lattice points in the simplex $\Delta(a,b,c)$ with vertices $(0,0,0)$, $(a,0,0)$, $(0,b,0)$, and $(0,0,c)$. 
The estimates of Yau and Zhang
yield the following result.

\begin{thm}\label{excess_thm2}
Let $2 \le a< b< c$ be pairwise relatively prime integers, let  ${\bf r} := (ab,ac,bc)$, let ${\rm S}({\bf r})$ denote the generic semigroup adapted to ${\bf r}$, and assume $d \ge 2g(\rm{S(\bf{r})})$. 
The Severi variety $M^3_{d,g({\rm S}({\bf r}));{\rm S}({\bf r}),{\bf r}}$ is excess-dimensional whenever $a \geq 4$.
\end{thm}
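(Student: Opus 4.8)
The plan is to reduce Theorem~\ref{excess_thm2} to the inequality \eqref{rhobound2} and then dispatch the latter using the Yau--Zhang estimate from Theorem~\ref{strongYZ}, exactly in the spirit of the proof of Theorem~\ref{excess_thm}. First I would recall that, by the discussion preceding the statement, the left-hand side of \eqref{rhobound2} equals three less than the number of lattice points in the closed simplex $\Delta(a,b,c)$ with vertices $(0,0,0),(a,0,0),(0,b,0),(0,0,c)$; that is, $\#\{x\in{\rm S}\colon x<abc\} = Q_{(a,b,c)}-3$, where I have identified the lattice-point count with $Q_{(a,b,c)}$ in the notation introduced before Proposition~\ref{weakYZ}. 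Thus \eqref{rhobound2} is equivalent to
\[
Q_{(a,b,c)} \le abc-(ab+ac+bc)+9.
\]
Since $a\ge 4$ forces $a,b,c\ge 4>1$, the hypotheses $\alpha\ge\beta\ge\gamma\ge 1$ of Theorem~\ref{strongYZ} are met after rescaling, so I would apply the bound to the triple $(a,b,c)$ itself.

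Concretely, I would set $\eta=\frac1a+\frac1b+\frac1c$ and invoke $Q_{(a,b,c)}=P_{(a(1+\eta),b(1+\eta),c(1+\eta))}$ together with Theorem~\ref{strongYZ} to obtain
\[
Q_{(a,b,c)} \le \frac{(a(1+\eta)-1)(b(1+\eta)-1)(c(1+\eta)-1)}{6}.
\]
The crucial computational input is that this product telescopes pleasantly: expanding $(a(1+\eta)-1) = a+b+c+a\eta'-1$ style terms, one finds by the same manipulation used in the proof of Theorem~\ref{excess_thm} (where $\alpha,\beta,\gamma$ were shifted analogues of $c,b,a$) that
\[
\frac{(a(1+\eta)-1)(b(1+\eta)-1)(c(1+\eta)-1)}{6} = \frac{abc+(ab+ac+bc)+(a+b+c)+\text{l.o.t.}}{6},
\]
so the leading $\tfrac{abc}{6}$ is far below the target $abc$ and the inequality will hold with room to spare once $a\ge 4$. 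I would therefore reduce to showing a polynomial inequality $G(a,b,c)\ge 0$ analogous to \eqref{bound_eq}, and then run the same monotonicity argument: differentiate in $c$ to push to $c=b+1$, then in $b$ to push to $b=a+1$, then check the one-variable cubic $G(a,a+1,a+2)$ is nonnegative for $a\ge 4$ by direct computation.

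The main obstacle I anticipate is bookkeeping rather than conceptual: I must verify carefully that the correct simplex here is $\Delta(a,b,c)$ (with the \emph{integer} edge-lengths $a,b,c$), not the shrunken rational simplex $\Delta(\alpha,\beta,\gamma)$ of Lemma~\ref{simplex_lem}, and that the ``$-3$'' and the ``$+7$'' versus ``$+9$'' adjustments are tracked exactly through the equivalence. A secondary subtlety is that the target bound $abc-(ab+ac+bc)+9$ in \eqref{rhobound2} has a much larger leading coefficient than the $\tfrac{abc}{2}$ appearing in \eqref{rhobound1}, so the feasibility region is far more generous; I expect the resulting cubic to be nonnegative for \emph{all} $a\ge 4$ with no exceptional triples to exclude, which is why the statement is cleaner than Theorem~\ref{excess_thm}. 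The only real care needed is confirming that the Yau--Zhang hypothesis $\gamma\ge 1$ cannot fail here — and indeed it cannot, since $\gamma=a\ge 4$ — so no triangular-degeneration side analysis of the sort appearing before Proposition~\ref{weakYZ} is required.
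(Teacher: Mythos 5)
Your reduction follows the paper's own route: pass to \eqref{rhobound2}, identify its left-hand side as the lattice-point count of $\Delta(a,b,c)$ minus $3$, and control that count with Theorem~\ref{strongYZ} (the paper first disposes of most $a\ge 5$ cases with the cruder Proposition~\ref{weakYZ}, which your all-strong-bound plan legitimately skips, since the strong bound subsumes the weak one). But your endgame has a genuine gap, exactly where you predict there is none. First, a minor slip: the product does not telescope as in the proof of Theorem~\ref{excess_thm} --- that simplification was special to the shrunken simplex $\Delta(\alpha,\beta,\gamma)$. Here, with $\eta=\frac{1}{a}+\frac{1}{b}+\frac{1}{c}$, one has $a(1+\eta)-1=a\left(1+\frac{1}{b}+\frac{1}{c}\right)$, so the Yau--Zhang bound equals $\frac{abc}{6}\left(1+\frac{1}{b}+\frac{1}{c}\right)\left(1+\frac{1}{a}+\frac{1}{c}\right)\left(1+\frac{1}{a}+\frac{1}{b}\right)=\frac{abc+2(ab+ac+bc)+3(a+b+c)+\cdots}{6}$, not $\frac{abc+(ab+ac+bc)+(a+b+c)+\cdots}{6}$; this is harmless since only the leading $\frac{abc}{6}$ drives the asymptotics. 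The fatal point is the monotonicity endpoint: the inequality you must certify, namely $\frac{1}{6}\left(1+\eta-\frac{1}{a}\right)\left(1+\eta-\frac{1}{b}\right)\left(1+\eta-\frac{1}{c}\right)<1-\eta+\frac{10}{abc}$ (the paper's \eqref{strongYZ_abc}), is \emph{false} at $(a,b,c)=(4,5,6)$. There $\eta=\frac{37}{60}$, the left side is $\frac{82\cdot 85\cdot 87}{6\cdot 60^3}\approx 0.4679$, and the right side is $\frac{7}{15}\approx 0.4667$; in integer terms, Yau--Zhang only yields a bound of $\approx 56.1$ for the lattice-point count of $\Delta(4,5,6)$, whereas your target is $abc-(ab+ac+bc)+9=55$. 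So after pushing $c\to b+1$ and $b\to a+1$, the final check ``the cubic is nonnegative at $(a,a+1,a+2)$ for all $a\ge 4$'' fails at $a=4$, and your proof does not establish the $a=4$ case of the theorem.

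This is precisely why the paper does \emph{not} treat $a=4$ by descending to $c=b+1$: it invokes pairwise coprimality, which for $a=4$ forces $b$ and $c$ to be odd, hence $b\ge 5$ and $c\ge b+2$, and it runs the monotonicity argument only along $c=b+2$; even then, the simplified criterion it uses (dropping the $\frac{10}{abc}$ term) fails at $(4,5,7)$, which must be settled by direct inspection --- the full inequality \eqref{strongYZ_abc} does hold at $(4,5,7)$, though not at $(4,5,6)$. Your closing assertions --- that there will be ``no exceptional triples to exclude'' and that no coprimality-based side analysis is required --- are therefore exactly backwards: the parity constraint is what rescues $a=4$. For $a\ge 5$ your plan does go through (at $(5,6,7)$ the left side is $\approx 0.4005$ against $\approx 0.5381$ on the right), in agreement with the paper, so the repair needed is confined to $a=4$.
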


\begin{proof}
Let $\eta=\frac{1}{a}+\frac{1}{b}+\frac{1}{c}$. Applying Proposition~\ref{weakYZ}, it suffices to show that
$\frac{1}{6}(\eta+1)^3<1-\eta+\frac{10}{abc}$.
Whenever $a \geq 5$ and $(a,b,c) \neq (5,6,7)$, we have $1-\eta- \frac{1}{6}(1+\eta)^3>0$ and the conclusion follows. Similarly, according to Theorem~\ref{strongYZ}, it suffices to show that
\begin{equation}\label{strongYZ_abc}
\frac{1}{6}\bigg(\eta+1-\frac{1}{a}\bigg)\bigg(\eta+1-\frac{1}{b}\bigg)\bigg(\eta+1-\frac{1}{c}\bigg)<1-\eta+\frac{10}{abc}.
\end{equation}
By inspection see that \eqref{strongYZ_abc} holds when $(a,b,c)=(5,6,7)$. Finally, say $a=4$, and let $E(x,y):=\frac{1}{6}(1+x+y)(\frac{5}{4}+x)(\frac{5}{4}+y)-\frac{3}{4}+x+y$.
The inequality \eqref{strongYZ_abc} holds in this case whenever $E(x,y)<0$, where $x=\frac{1}{b}$ and $y=\frac{1}{c}$. Clearly $E(x,y)$ is an increasing function in either of its arguments $x$ or $y$, i.e., $E(x,y)$ is {\it decreasing} as a function of either $b$ or $c$. In particular, since $b$ and $c$ are necessarily distinct odd integers, $E(x,y)$ will be maximal when $c=b+2$ for some $b$.
Here
$E(\frac{1}{b},\frac{1}{b+2})$ is negative if and only if $b>5$; it follows that \eqref{strongYZ_abc} holds whenever $(a,b,c) \neq (4,5,7)$; and by direct inspection \eqref{strongYZ_abc} also holds when $(a,b,c)=(4,5,7)$.
\end{proof}

\subsection{First approximations to generic semigroups from supersymmetric triples}\label{Supersym_approx}

Throughout this section we let ${\bf r} = (ab,ac,bc)$ and let $\rm{S}({\bf r})$ be the generic semigroup adapted to ${\bf r}$.  Clearly, $\rm{S} = \langle ab,ac,bc \rangle \subseteq \rm{S}({\bf r})$ and therefore, $g(\rm{S}({\bf r})) \le g(\rm{S}) = abc-\frac{ab+ac+bc-1}{2}$. We now discuss a first approximation to $\rm{S}({\bf r})$ and improve that upper bound on $g(\rm{S}({\bf r}))$ in some cases.  The three factorizations of the element $abc$ in $\rm{S}$ lead directly to the following result.

\begin{prop}\label{generic_semigroup_supersymmetric}
With $2\le a<b<c$ pairwise relatively prime positive integers as above, the value semigroup $\rm{S}({\bf r})$ of a generic parametrization with ramification profile ${\bf r} = (ab,ac,bc)$ contains  $\langle ab, ac, bc; abc+1, abc+2 \rangle$.
\end{prop}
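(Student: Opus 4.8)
The plan is to prove the containment $\langle ab, ac, bc; abc+1, abc+2 \rangle \subseteq {\rm S}({\bf r})$ by exhibiting, for each of the two new generators $abc+1$ and $abc+2$, an explicit difference of monomials in the parametrizing power series $f_1, f_2, f_3$ whose $t$-adic valuation is exactly that generator. Here $f_1, f_2, f_3$ are generic power series with $v_t(f_1) = ab$, $v_t(f_2) = ac$, $v_t(f_3) = bc$, each normalized to have leading coefficient $1$. The containment $\langle ab, ac, bc\rangle \subseteq {\rm S}({\bf r})$ is immediate, so the entire content is to realize $abc+1$ and $abc+2$ as valuations of elements of the value algebra.

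First I would recall the source of the two extra generators, namely the three distinct factorizations of $abc$ in ${\rm S} = \langle ab,ac,bc\rangle$: we have $abc = c\cdot(ab) = b\cdot(ac) = a\cdot(bc)$. Correspondingly, the three monomials $f_1^c$, $f_2^b$, $f_3^a$ all have $t$-valuation exactly $abc$, and each has leading coefficient $1$. Taking the difference of any two of these cancels the leading $t^{abc}$ term, so by genericity of the higher-order coefficients the next coefficient survives; concretely, $v_t(f_1^c - f_2^b) = abc+1$, which certifies $abc+1 \in {\rm S}({\bf r})$. This is precisely the mechanism underlying \eqref{diophantine} and used in the proofs of Theorem~\ref{m2_approx} and Theorem~\ref{generic_semigroup_consecutive_multiples}(1).

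To produce $abc+2$, I would use the second available difference together with the first: having arranged $v_t(f_1^c - f_2^b) = abc+1$ and $v_t(f_1^c - f_3^a) = abc+1$, I would take an appropriate $\mb{C}$-linear combination (equivalently, the difference $f_2^b - f_3^a$, or a combination chosen to kill the order-$(abc+1)$ coefficient as well) to obtain a power series of valuation exactly $abc+2$. The point is that the three series $f_1^c, f_2^b, f_3^a$ agree to leading order, so the three pairwise differences span, to leading orders, a space in which one can force the valuation up by one more step; genericity of the coefficients $[t^{abc+1}]f_i^{(\cdot)}$ and $[t^{abc+2}]f_i^{(\cdot)}$ guarantees the relevant $2\times 2$ (or $2\times 3$) coefficient determinant is nonzero, exactly as in the matrix argument of Lemma~\ref{valuation_bound}. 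Thus some element of the algebra generated by the $f_i$ has valuation $abc+2$, certifying $abc+2 \in {\rm S}({\bf r})$.

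The main obstacle, and the only genuinely delicate point, is verifying that the leading coefficients of $f_1^c$, $f_2^b$, $f_3^a$ genuinely coincide and that no unexpected algebraic dependence among the low-order coefficients prevents the valuation of the second combination from being exactly $abc+2$ (rather than accidentally jumping higher). This is handled by the same genericity/invertibility principle as in Lemma~\ref{valuation_bound}: the higher-order coefficients of the $f_i$ are algebraically independent, so the coefficients of the monomials $f_1^c, f_2^b, f_3^a$ in degrees $abc+1$ and $abc+2$ are generic, and the governing matrix is nonsingular. I would therefore not belabor the computation, but simply invoke genericity to conclude that the two prescribed combinations have valuations exactly $abc+1$ and $abc+2$, which yields the asserted containment.
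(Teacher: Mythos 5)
Your proposal is correct and takes essentially the same route as the paper, which justifies Proposition~\ref{generic_semigroup_supersymmetric} only by the remark that the three factorizations $abc = c(ab) = b(ac) = a(bc)$ ``lead directly'' to the result: your argument---pairwise differences of $f_1^c, f_2^b, f_3^a$ yielding valuation $abc+1$, then a combination killing the $t^{abc+1}$ coefficient yielding valuation exactly $abc+2$, with the nonsingularity of the coefficient matrix guaranteed by genericity as in Lemmas~\ref{valuation_bound} and~\ref{valuation_bound_bis}---is precisely the intended filling-in of that remark. The only small slip is the parenthetical suggestion that the difference $f_2^b - f_3^a$ by itself does the job for $abc+2$ (generically it too has valuation $abc+1$); your primary formulation, a linear combination of the two differences chosen to annihilate the order-$(abc+1)$ term, is the correct one.
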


Note that $\langle ab, ac, bc; abc+1, abc+2 \rangle$ is the first approximating semigroup ${\rm S}^{(1)}$ of Remark~\ref{discrete_dynamical_system}.
Instead of working directly with $\langle ab, ac, bc; abc+1, abc+2 \rangle$, we focus on the simpler semigroup $\rm{S}' = \langle ab,ac,bc,abc+1\rangle$ in the case where $abc+1\not\in \langle ab,ac,bc\rangle$.  We begin with a characterization of when this occurs.

\begin{defn}
Let $\gamma$ be the unique integer such that $1 \le \gamma \le c-1$ and $ \gamma ab \equiv 1 \pmod{c}$; let $\beta$ be the unique integer such that $1 \le \beta \le b-1$ and $ \beta ac \equiv 1 \pmod{b}$, and let $\alpha$ be the unique integer such that $1 \le \alpha \le a-1$ and $ \alpha bc \equiv 1 \pmod{a}$.  
\end{defn}

\begin{prop}
The smallest integer in ${\rm S}$ that is congruent to $1$ modulo $abc$ is $\gamma  ab+ \beta  ac + \alpha  bc$.
\end{prop}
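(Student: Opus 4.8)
The plan is to show that the quantity $\gamma\,ab+\beta\,ac+\alpha\,bc$ is (i) an element of ${\rm S}$, (ii) congruent to $1$ modulo $abc$, and (iii) the smallest such element. The first point is immediate, since this is manifestly an $\mb{N}$-linear combination of the generators $ab,ac,bc$. For the congruence in (ii), I would work modulo the three pairwise-coprime factors $a$, $b$, $c$ separately and invoke the Chinese Remainder Theorem. Indeed, modulo $c$ the terms $\beta ac$ and $\alpha bc$ vanish, leaving $\gamma ab \equiv 1 \pmod c$ by the defining property of $\gamma$; symmetrically the sum is $\equiv 1 \pmod b$ (only the $\beta ac$ term survives) and $\equiv 1 \pmod a$ (only the $\alpha bc$ term survives). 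Since $a,b,c$ are pairwise relatively prime, $abc \mid (\gamma ab+\beta ac+\alpha bc) - 1$, which is exactly (ii).

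The crux is minimality (iii), and this is where I expect the main obstacle to lie. The idea is that any $s \in {\rm S}$ with $s \equiv 1 \pmod{abc}$ has a factorization $s = u\,ab + v\,ac + w\,bc$ with $u,v,w \in \mb{N}$, and I want to force $u \ge \gamma$, $v \ge \beta$, $w \ge \alpha$, from which $s \ge \gamma ab + \beta ac + \alpha bc$ follows termwise. Reducing the factorization modulo $c$ gives $u\,ab \equiv 1 \pmod c$, and since $ab$ is invertible mod $c$ this pins down $u \bmod c$ to be exactly $\gamma$ (the unique residue in $\{1,\dots,c-1\}$ with this property, noting $\gamma \neq 0$ as $1 \not\equiv 0$); hence $u \ge \gamma$. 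Running the same argument modulo $b$ and modulo $a$ yields $v \equiv \beta \pmod b$ so $v \ge \beta$, and $w \equiv \alpha \pmod a$ so $w \ge \alpha$. Combining, $s = u\,ab+v\,ac+w\,bc \ge \gamma ab + \beta ac + \alpha bc$, establishing minimality.

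The subtle point I would be careful about is that the reduction modulo $c$ must genuinely isolate the coefficient $u$: one checks $u\,ab + v\,ac + w\,bc \equiv u\,ab \pmod c$ because both $ac$ and $bc$ are divisible by $c$, so the congruence $s \equiv 1$ reads purely as a condition on $u$. The pairwise coprimality is used twice — once to guarantee $ab$, $ac$, $bc$ are respectively invertible modulo $c$, $b$, $a$ (so that $\gamma,\beta,\alpha$ are well-defined), and once via CRT to assemble the three residue conditions into a single statement modulo $abc$. I would close by remarking that the three lower bounds $u\ge\gamma$, $v\ge\beta$, $w\ge\alpha$ are simultaneously achievable by the canonical choice $(u,v,w)=(\gamma,\beta,\alpha)$, so the infimum is attained and equals $\gamma ab+\beta ac+\alpha bc$ exactly.
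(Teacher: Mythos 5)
Your proposal is correct and matches the paper's own argument: the paper likewise reduces an arbitrary factorization $n = x\,ab + y\,ac + z\,bc$ modulo $a$, $b$, $c$ (via the Chinese remainder theorem) to conclude $x \equiv \gamma \pmod{c}$, $y \equiv \beta \pmod{b}$, $z \equiv \alpha \pmod{a}$, so that nonnegativity forces $x \ge \gamma$, $y \ge \beta$, $z \ge \alpha$. Your write-up is just slightly more explicit in verifying that the candidate element itself lies in ${\rm S}$ and is congruent to $1$ modulo $abc$, steps the paper leaves implicit.
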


\begin{proof}
Suppose that $n = x  ab+ y  ac + z  bc \equiv 1 \pmod{abc}$;
the Chinese remainder theorem then implies that $x \equiv \gamma \pmod{c}, \ \ y \equiv \beta \pmod{b}, \text{ and } z \equiv \alpha \pmod{a}$. So if $x,y,z \ge 0$, we necessarily have $x \ge \gamma,\ y \ge \beta$, and $z \ge \alpha$.
\end{proof}

As $0 < \gamma ab, \beta ac, \alpha bc < abc$, we see that $\gamma  ab+ \beta  ac + \alpha  bc$ is equal to either $abc+1$ or $2abc +1$.  If it is equal to $abc+1$, then ${\rm S}' = {\rm S}$.  For the remainder of this section we focus on the case where $\gamma  ab+ \beta  ac + \alpha  bc = 2abc+1$. This implies $abc+1 = \gamma  ab + \beta  ac + (\alpha-a)  bc$.
\begin{prop}\label{genus_S'}
Suppose $abc+1 \not\in \rm{S}$.  Then 
\[
g(\rm{S'}) = g(\rm{S}) - (a-\alpha)(b-\beta)(c-\gamma).
\]
In this case, $g(\rm{S}({\bf r})) \le abc- \frac{ab+ac+bc-1}{2} - (a-\alpha)(b-\beta)(c-\gamma)$.
\end{prop}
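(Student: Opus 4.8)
The plan is to establish the genus formula $g(\rm{S}') = g(\rm{S}) - (a-\alpha)(b-\beta)(c-\gamma)$ by a direct bijective/counting argument, and then deduce the inequality for $g(\rm{S}({\bf r}))$ as an immediate corollary. Since $\rm{S}' = \langle ab,ac,bc,abc+1 \rangle \supseteq \rm{S}$, every gap of $\rm{S}'$ is a gap of $\rm{S}$, so the difference $g(\rm{S}) - g(\rm{S}')$ counts exactly those integers that are gaps of $\rm{S}$ but lie in $\rm{S}'$. First I would observe that an element $n$ lies in $\rm{S}' \setminus \rm{S}$ precisely when every factorization $n = x\,ab + y\,ac + z\,bc + w(abc+1)$ with $(x,y,z,w) \in \Z_{\ge 0}^4$ must have $w \ge 1$ (equivalently, $n \notin \rm{S}$ but $n - k(abc+1) \in \rm{S}$ for some $k \ge 1$). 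The heart of the matter is therefore to count the gaps of $\rm{S}$ that become fillable once the single new generator $abc+1$ is adjoined.

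The key step is to use the symmetry of $\rm{S}$ from Proposition~\ref{supersym_prop} to convert this filling count into a clean lattice-point count, mirroring the strategy of Lemma~\ref{simplex_lem}. Using the relation $abc+1 = \gamma\,ab + \beta\,ac + (\alpha - a)\,bc$ that holds in the case under consideration, I would argue that the newly realized elements are governed by factorizations of $1 \pmod{abc}$ and that subtracting off copies of $abc+1$ from a gap $n$ lands it in $\rm{S}$ exactly when certain congruence and positivity constraints are met. Concretely, I expect the set of gaps of $\rm{S}$ that lie in $\rm{S}'$ to be in bijection with the lattice points of a box of dimensions $(a-\alpha) \times (b-\beta) \times (c-\gamma)$: the congruence conditions modulo $a$, $b$, $c$ (coming from the Chinese Remainder Theorem, as in the preceding Proposition) pin down residues, while the inequalities $0 \le x < a-\alpha$ and similarly for the other coordinates cut out precisely $(a-\alpha)(b-\beta)(c-\gamma)$ admissible triples. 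Establishing this bijection rigorously---showing each such triple yields a distinct integer that is a gap of $\rm{S}$ but an element of $\rm{S}'$, and that no others arise---is the main obstacle, since one must carefully verify both the non-membership in $\rm{S}$ and the membership in $\rm{S}'$ for exactly this range, ruling out over- and under-counting.

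Once the filling count $g(\rm{S}) - g(\rm{S}') = (a-\alpha)(b-\beta)(c-\gamma)$ is established, the genus formula follows by rearrangement, and substituting the value of $g(\rm{S})$ from Proposition~\ref{supersym_prop}(3) gives the explicit expression. The final inequality is then immediate: since $\rm{S}' \subseteq \langle ab,ac,bc; abc+1, abc+2\rangle \subseteq \rm{S}({\bf r})$ by Proposition~\ref{generic_semigroup_supersymmetric}, containment of semigroups reverses the inequality on genera, yielding $g(\rm{S}({\bf r})) \le g(\rm{S}') = abc - \frac{ab+ac+bc-1}{2} - (a-\alpha)(b-\beta)(c-\gamma)$, which is exactly the claimed bound. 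I would expect the verification that the box really has the stated dimensions---in particular that the correct ranges are $a-\alpha$, $b-\beta$, $c-\gamma$ rather than some shifted variants---to require the most care, and I would pin it down by tracking the minimal element $\gamma\,ab + \beta\,ac + \alpha\,bc = 2abc+1$ and its role in forcing the positivity thresholds.
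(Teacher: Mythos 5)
Your setup is correct and your guess for the answer is right, but the proof has a genuine gap: the central counting step is never carried out. You correctly reduce the problem to counting gaps of ${\rm S}$ that lie in ${\rm S}'$, correctly identify the relation $abc+1 = \gamma\, ab + \beta\, ac + (\alpha-a)\, bc$, and correctly predict a box of dimensions $(a-\alpha)\times(b-\beta)\times(c-\gamma)$ --- but you then explicitly defer the bijection (``the main obstacle'') rather than prove it. The two ingredients that close this gap, and which the paper uses, are missing from your plan. First, since $2(abc+1) > F({\rm S}) = 2abc-(ab+ac+bc)$, any $n \in {\rm S}'$ whose factorizations all use at least two copies of $abc+1$ satisfies $n \ge 2(abc+1) > F({\rm S})$ and hence lies in ${\rm S}$ anyway; so every element of ${\rm S}'\setminus{\rm S}$ is of the form $(abc+1) + x\,ab + y\,ac + z\,bc$ with $x,y,z \ge 0$, i.e.\ uses \emph{exactly one} copy of the new generator. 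Your characterization ``$n - k(abc+1)\in{\rm S}$ for some $k\ge 1$'' is true but, without this observation, leaves open exactly the over-/under-counting worry you flag. Second, rewriting such an element as $(x+\gamma)\,ab + (y+\beta)\,ac + (z+\alpha-a)\,bc$, the membership criterion of Proposition~\ref{not_in_ab_ac_bc} (an integer written with first coordinate in $[0,c-1]$ and second in $[0,b-1]$ lies in ${\rm S}$ iff its third coordinate is nonnegative), together with the uniqueness statement of Proposition~\ref{unique_ab_ac_bc}, shows in one stroke that the elements of ${\rm S}'\setminus{\rm S}$ are exactly, and distinctly, parametrized by $0 \le x < c-\gamma$, $0\le y < b-\beta$, $0 \le z < a-\alpha$. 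Non-membership in ${\rm S}$, membership in ${\rm S}'$, and distinctness --- the three things you list as obstacles --- all follow immediately from these two facts.

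A further remark: your proposed appeal to the symmetry of ${\rm S}$ (Proposition~\ref{supersym_prop}) is a red herring. The paper's proof of this proposition makes no use of symmetry, and it is not apparent how symmetry would simplify the count here; the right tools are the Frobenius-number bound and the canonical-form criterion above, not the reflection $n \mapsto F({\rm S})-n$ used in Lemma~\ref{simplex_lem}. Your deduction of the final inequality from ${\rm S}' \subseteq \langle ab,ac,bc;\, abc+1,\, abc+2\rangle \subseteq {\rm S}({\bf r})$ is correct and matches the paper.
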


We delay the proof of this proposition to the end of this section.  The main idea of the proof is to count elements $x\in \mb{N}\setminus \rm{S}$ for which $x-(abc+1)\in \rm{S}$.  In order to do this, we begin by giving an explicit description of $\mb{N}\setminus \rm{S}$.

\begin{prop}\label{unique_ab_ac_bc}
Every integer $n$ may be written uniquely as $n = x  ab + y  ac + z  bc$ where $0 \le x \le c-1$ and $0 \le y \le b-1$.
\end{prop}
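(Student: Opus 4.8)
The plan is to determine $x$, $y$, and $z$ one at a time via a Chinese Remainder Theorem argument, exploiting the pairwise coprimality of $a$, $b$, and $c$. The starting observation is that $\gcd(ab,c) = \gcd(ac,b) = \gcd(bc,a) = 1$, so $ab$ is a unit modulo $c$ and $ac$ is a unit modulo $b$. These two unit facts are exactly what force the residues of $x$ and $y$, and hence their values within the prescribed ranges.

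First I would pin down $x$. Reducing the desired identity $n = x \cdot ab + y \cdot ac + z \cdot bc$ modulo $c$ annihilates the last two terms, leaving $n \equiv x \cdot ab \pmod{c}$. Since $ab$ is invertible modulo $c$, this congruence has a unique solution $x$ in the range $0 \le x \le c-1$, and this $x$ is forced in any valid representation. I would then pin down $y$ in the same way: reducing $n = x \cdot ab + y \cdot ac + z \cdot bc$ modulo $b$ annihilates the first and third terms and yields $n \equiv y \cdot ac \pmod{b}$, so invertibility of $ac$ modulo $b$ determines $y$ uniquely with $0 \le y \le b-1$.

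It remains to produce $z$ and verify it is an integer; this is the step that makes the two range constraints on $x$ and $y$ sufficient, and I expect it to be the only point requiring a genuine argument. Set $m := n - x \cdot ab - y \cdot ac$; the claim is that $bc \mid m$. By the choice of $x$ we have $m \equiv n - x \cdot ab \equiv 0 \pmod{c}$, and by the choice of $y$ we have $m \equiv n - y \cdot ac \equiv 0 \pmod{b}$. Since $\gcd(b,c) = 1$, these two divisibilities combine to give $bc \mid m$, so $z := m/(bc) \in \Z$ is well-defined, which establishes existence. Uniqueness then follows at once, since the same two modular reductions force the values of $x$ and $y$ in any representation, after which $z$ is pinned down by $z = (n - x \cdot ab - y \cdot ac)/(bc)$.
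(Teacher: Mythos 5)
Your proof is correct. The uniqueness half coincides with the paper's: reduce modulo $c$ and use $\gcd(ab,c)=1$ to force $x$, reduce modulo $b$ and use $\gcd(ac,b)=1$ to force $y$, after which $z$ is determined. The existence half is packaged differently. The paper starts from the assertion that every residue class modulo $abc$ has a unique representative $x'\,ab + y'\,ac + z'\,bc$ with $0 \le x' \le c-1$, $0 \le y' \le b-1$, $0 \le z' \le a-1$ (a Chinese Remainder Theorem fact it leaves unproved), and then reaches the given integer $n$ in that class by replacing $z'$ with $z'-ka$, using $abc = a\cdot bc$. You instead solve the two congruences $n \equiv x\,ab \pmod{c}$ and $n \equiv y\,ac \pmod{b}$ for $x$ and $y$ in the stated ranges, and then observe that $m = n - x\,ab - y\,ac$ is divisible by both $c$ and $b$, hence by $bc$ since $\gcd(b,c)=1$, so $z = m/(bc)$ is an integer. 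The two arguments rest on the same coprimality facts, but yours is somewhat more self-contained: it constructs the representation directly rather than passing through the normal form modulo $abc$, and in doing so it supplies the divisibility step that the paper's version absorbs into its unproved opening claim.
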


\begin{proof}
Every residue class modulo $abc$ has a unique representative of the form
\[
m = x'  ab + y'  ac + z'  bc
\]
where $0 \le x \le c-1,\ 0 \le y \le b-1,\ 0 \le z' \le a-1$. Now suppose $m \equiv n \pmod{abc}$.  Then $n = x'  ab + y'  ac + (z'-ka)  bc$ for some integer $k$. To see that this representation is unique, suppose $n = x  ab + y  ac + z  bc = x'  ab + y'  ac + z'  bc$ where $0 \le x,x' \le c-1$ and $0 \le y,' \le b-1$.  As $n \equiv xab \equiv x'ab \pmod{c}$ and $\gcd(c,ab) = 1$, we have $x \equiv x' \pmod{c}$, which implies $x = x'$.  A similar argument shows that $y=y'$.  It follows immediately that $z = z'$, so we conclude.
\end{proof}

\begin{defn}
We say that $(x,y,z)$ is an \emph{$(ab,ac,bc)$-factorization} of an integer $n$ whenever $n = x  ab + y  ac+z bc$.
\end{defn}

\begin{prop}\label{ab_ac_bc_factorizations}
Suppose $(x,y,z)$ is an $(ab,ac,bc)$-factorization of $n$.  The set of $(ab,ac,bc)$-factorizations of $n$ is given by $(x+k_1 c, y+k_2 b, z + k_3 a)$ where $k_1,k_2, k_3 \in \Z$ satisfy $k_1+k_2 + k_3 = 0$.
\end{prop}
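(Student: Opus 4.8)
The plan is to reduce the statement to understanding the integer solutions of the homogeneous relation $u\,ab + v\,ac + w\,bc = 0$, and then transport this description back to the affine set of factorizations of $n$. First I would observe that if $(x',y',z')$ is any $(ab,ac,bc)$-factorization of $n$, then subtracting the two expressions for $n$ gives
\[
(x'-x)ab + (y'-y)ac + (z'-z)bc = 0.
\]
Writing $u = x'-x$, $v=y'-y$, and $w=z'-z$, the task becomes to show that the integer solutions of $u\,ab+v\,ac+w\,bc=0$ are exactly those with $c \mid u$, $b \mid v$, $a \mid w$, and $\frac{u}{c} + \frac{v}{b} + \frac{w}{a} = 0$.

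The key step exploits the pairwise coprimality of $a,b,c$, much as in the proof of Proposition~\ref{unique_ab_ac_bc}. Reducing the relation modulo $c$ annihilates the last two terms and leaves $u\,ab \equiv 0 \pmod{c}$; since $\gcd(ab,c)=1$, this forces $c \mid u$, say $u = k_1 c$. The symmetric reductions modulo $b$ and modulo $a$ give $b \mid v$ and $a \mid w$, say $v = k_2 b$ and $w = k_3 a$. Substituting these back into the relation yields $abc(k_1+k_2+k_3)=0$, whence $k_1+k_2+k_3=0$. This shows that every factorization of $n$ has the claimed shape $(x+k_1 c,\,y+k_2 b,\,z+k_3 a)$.

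For the converse I would simply verify that any triple of this form with $k_1+k_2+k_3=0$ is genuinely a factorization of $n$: expanding,
\[
(x+k_1 c)ab + (y+k_2 b)ac + (z+k_3 a)bc = n + abc(k_1+k_2+k_3) = n.
\]
There is no serious obstacle here; the argument is essentially a threefold application of the Chinese-remainder-style reduction already invoked in the preceding propositions. The only point requiring a little care is to establish \emph{both} inclusions — that every factorization lies in the claimed lattice coset, and that every element of that coset is an honest factorization — rather than only the forward direction.
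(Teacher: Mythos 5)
Your proof is correct and follows essentially the same route as the paper's: both verify the easy direction by direct expansion and obtain the forward direction from reductions modulo $c$ and $b$ using pairwise coprimality. The only (immaterial) difference is that you also reduce modulo $a$ to get $a \mid w$ directly, whereas the paper deduces $z'-z = -(k_1+k_2)a$ by subtracting after the first two reductions.
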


\begin{proof}
If $n = x ab + y  ac + z bc$, then for any integers $k_1, k_2, k_3$ with $k_1 + k_2 + k_3 = 0$ we have $(x+k_1 c) ab + (y+k_2 b) ac + (z+k_3 a) bc = n + (k_1+k_2+k_3) abc = n$. Now suppose $n = x'  ab + y'  ac + z'  bc$. As in the proof of the previous proposition, we have $x \equiv x' \pmod{c}$.  Therefore, $x' = x + k_1 c $ for some $k_2 \in \Z$.  A similar argument shows that $y' = y + k_2 b$ for some $k_2 \in \Z$.  Subtracting we deduce
\[
(x ab + y  ac + z bc) - (x'  ab + y'  ac + z'  bc) = (z-z')  bc + (-k_1a+k_2a) bc
\]
which implies that $z' = z - (k_1+k_2)a$.
\end{proof}
Proposition \ref{ab_ac_bc_factorizations} 
amounts to an explicit {\it minimal presentation} for $\langle ab,ac,bc\rangle$ in the sense of \cite[Ch. 7]{GSR}. 

\begin{prop}\label{not_in_ab_ac_bc}
Suppose $n \in 	\Z$ satisfies $n = x  ab + y  ac + z  bc$ where $0 \le x \le c-1$ and $0 \le y \le b-1$.  Then $n \in {\rm S}$ if and only if $z \ge 0$.
\end{prop}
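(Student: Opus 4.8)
The plan is to deduce both implications from the two preceding propositions: the uniqueness of the normalized representation in Proposition~\ref{unique_ab_ac_bc} and the explicit description of all $(ab,ac,bc)$-factorizations of a fixed integer in Proposition~\ref{ab_ac_bc_factorizations}. The forward implication is immediate: if $z \ge 0$, then since the hypotheses already give $x \ge 0$ and $y \ge 0$, the identity $n = x\,ab + y\,ac + z\,bc$ exhibits $n$ as an $\mathbb{N}$-linear combination of the generators $ab,ac,bc$, so $n \in {\rm S}$. All the content is in the converse.

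For the converse, I would start from the assumption $n \in {\rm S}$, so that $n$ has some factorization $(x',y',z') \in \mathbb{Z}_{\ge 0}^3$ with $n = x'\,ab + y'\,ac + z'\,bc$. Both $(x,y,z)$ and $(x',y',z')$ are then $(ab,ac,bc)$-factorizations of the same integer $n$, so Proposition~\ref{ab_ac_bc_factorizations} supplies integers $k_1,k_2,k_3$ with $k_1+k_2+k_3 = 0$ and $(x',y',z') = (x+k_1 c,\, y+k_2 b,\, z+k_3 a)$. The goal is then purely to control the sign of $k_3$, since $z = z' - k_3 a$ and $z' \ge 0$.

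The key step, and the only real obstacle, is to use the normalizing bounds on $x$ and $y$ to force $k_1, k_2 \ge 0$. From $x' = x + k_1 c \ge 0$ together with $0 \le x \le c-1$ one gets $k_1 c \ge -x > -c$, and since $k_1$ is an integer this forces $k_1 \ge 0$; the identical estimate using $0 \le y \le b-1$ forces $k_2 \ge 0$. Here it is essential that the normalizing ranges are half-open, i.e.\ that $x \le c-1 < c$ and $y \le b-1 < b$, so that the inequalities $k_1 c \ge -x$ and $k_2 b \ge -y$ genuinely rule out $k_1 = -1$ and $k_2 = -1$. Once $k_1,k_2 \ge 0$ are in hand, the relation $k_1+k_2+k_3 = 0$ gives $k_3 = -(k_1+k_2) \le 0$, whence $z = z' - k_3 a \ge z' \ge 0$, completing the argument.
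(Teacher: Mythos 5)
Your proof is correct and is essentially the paper's argument: both rest on Proposition~\ref{ab_ac_bc_factorizations} together with the normalizing bounds $0 \le x \le c-1$, $0 \le y \le b-1$ to control the signs of $k_1,k_2,k_3$. The only difference is packaging — you run the implication directly ($n \in {\rm S}$ forces $k_1,k_2 \ge 0$, hence $k_3 \le 0$, hence $z \ge 0$), while the paper argues the contrapositive (if $z<0$, every factorization has a negative coordinate) — which is the same estimate read in the opposite direction.
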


\begin{proof}
If $z \ge 0$ then $n \in {\rm S}$. So suppose $z < 0$.  By Proposition \ref{ab_ac_bc_factorizations}, every $(ab,ac,bc)$-factorization of $n$ is of the form $(x+k_1c, y+k_2b, z + k_3 a)$, where $k_1+k_2 + k_3 = 0$.  If $k_3 \le 0$, we have $z+k_3 a < 0$; if $k_1 < 0$, we have $x+k_1 c <0$; while if $k_2 < 0$, we have $y+k_2 b< 0$.  We conclude that there is no $(ab,ac,bc)$-factorization of $n$ where each coordinate is positive.  So $n \not\in {\rm S}$.
\end{proof}

We can now give the proof of Proposition \ref{genus_S'}.
\begin{proof}[Proof of Proposition \ref{genus_S'}]
As $2(abc+1) > F({\rm S})$, every element of ${\rm S}'\setminus {\rm S}$ is of the form $1 (abc+1) + x  ab + y  ac + z  bc = (x+\gamma) ab + (y+\beta)  ac + (\alpha-a+z) bc$ with $x,y,z \ge 0$. By Proposition \ref{not_in_ab_ac_bc}, the elements of this form that are not in ${\rm S}$ are precisely those for which $0 \le x < c-\gamma,\ 0 \le y < b-\beta$, and $0 \le z < a-\alpha$.  Therefore, there are $(a-\alpha)(b-\beta)(c-\gamma)$ gaps of ${\rm S}$ that are not gaps of ${\rm S}'$.
\end{proof}

Proposition \ref{genus_S'} shows how the genus decreases when we add the element $abc+1$ to $\rm{S}$.  We end this paper by showing how the Frobenius number changes.  An immediate
consequence of the following result is that unlike ${\rm S}$, ${\rm S}'$ is generally not a symmetric semigroup.
\begin{prop}\label{frob_S'}
We have $F({\rm S}') = M$, where
\[
M:= \max\{(\gamma-1)ab + (b-1)ac -bc, (c-1)ab + (\beta-1)ac -bc,(c-1)ab+(b-1)ac + (\alpha-a-1)bc\}.
\]
\end{prop}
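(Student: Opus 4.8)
The plan is to reduce the computation of $F(\mathrm{S}')$ to a two-term membership test in $\mathrm{S}$, and then to carry out a short finite maximization using the canonical factorizations of Proposition~\ref{unique_ab_ac_bc}. First I would show that, as sets, $\mathrm{S}' = \mathrm{S}\cup(\mathrm{S}+(abc+1))$. The inclusion $\supseteq$ is immediate. For $\subseteq$, write an arbitrary element of $\mathrm{S}'$ as $s+k(abc+1)$ with $s\in\mathrm{S}$ and $k\ge 0$: if $k=0$ it lies in $\mathrm{S}$; if $k=1$ it lies in $\mathrm{S}+(abc+1)$; and if $k\ge 2$ then it is at least $2(abc+1)=2abc+2$, which exceeds $F(\mathrm{S})=2abc-(ab+ac+bc)$ by Proposition~\ref{supersym_prop}, so it already lies in $\mathrm{S}$. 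Consequently an integer $n$ is a gap of $\mathrm{S}'$ exactly when $n\notin\mathrm{S}$ and $n-(abc+1)\notin\mathrm{S}$, and hence
\[
F(\mathrm{S}') = \max\{\, n : n\notin\mathrm{S}\ \text{and}\ n-(abc+1)\notin\mathrm{S}\,\}.
\]

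Next I would introduce coordinates. For any $n$, write $n = x\,ab+y\,ac+z\,bc$ in the canonical form of Proposition~\ref{unique_ab_ac_bc}, with $0\le x\le c-1$ and $0\le y\le b-1$; by Proposition~\ref{not_in_ab_ac_bc}, the condition $n\notin\mathrm{S}$ is equivalent to $z<0$. Using the identity $abc+1 = \gamma\,ab+\beta\,ac+(\alpha-a)\,bc$ recorded just before Proposition~\ref{genus_S'}, subtraction represents $n-(abc+1)$ by the (non-canonical) triple $(x-\gamma,\,y-\beta,\,z-\alpha+a)$, which I would then reduce to canonical form. Since $0\le x\le c-1$ and $1\le\gamma\le c-1$, the first coordinate is already admissible when $x\ge\gamma$ and needs a single correction by $+c$ when $x<\gamma$; the analogous statement relative to $\beta$ holds for the second coordinate. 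By Proposition~\ref{ab_ac_bc_factorizations}, each such correction is compensated by subtracting $a$ from the third coordinate. Therefore the canonical third coordinate $z'$ of $n-(abc+1)$ equals $z-\alpha+a$ when $x\ge\gamma$ and $y\ge\beta$, equals $z-\alpha$ when exactly one of the inequalities $x<\gamma$, $y<\beta$ holds, and equals $z-\alpha-a$ when both hold; and $n-(abc+1)\notin\mathrm{S}$ is equivalent to $z'<0$.

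Finally, in each of these four cases I would impose $z<0$ and $z'<0$ and maximize $n=x\,ab+y\,ac+z\,bc$; as the three generators are positive, the maximum is attained at the largest admissible values of $x$, $y$, and $z$ taken separately. When $x\ge\gamma,\ y\ge\beta$ one gets $z\le\alpha-a-1$, hence the corner $(c-1,\,b-1,\,\alpha-a-1)$, the third term of $M$. When exactly one inequality fails, $z'=z-\alpha$ forces only $z\le -1$, giving the corners $(\gamma-1,\,b-1,\,-1)$ and $(c-1,\,\beta-1,\,-1)$, the first and second terms of $M$. When both fail, $z'=z-\alpha-a$ again forces only $z\le -1$, giving $(\gamma-1,\,\beta-1,\,-1)$, whose value is dominated by the first term of $M$ since $\beta-1\le b-1$. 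Taking the maximum over all four cases yields $F(\mathrm{S}')\le M$; conversely, a direct check that each of the three displayed candidates has $z<0$ and $z'<0$ shows each is a genuine gap, giving $F(\mathrm{S}')\ge M$. Together these establish $F(\mathrm{S}')=M$.

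The main obstacle is the bookkeeping in the second step: correctly reducing $n-(abc+1)$ to canonical form, and in particular tracking how the two \emph{independent} borrows needed to return $x-\gamma$ and $y-\beta$ to their admissible ranges each decrement the third coordinate by $a$, since it is precisely the sign of the resulting $z'$ that governs membership. Once this reduction is pinned down, the four-case maximization is entirely mechanical.
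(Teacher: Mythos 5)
Your proof is correct and takes essentially the same route as the paper's: both reduce membership in ${\rm S}'$ to the two-term test $n \notin {\rm S}$ and $n-(abc+1) \notin {\rm S}$ (using $2(abc+1) > F({\rm S})$), translate these into the canonical coordinates of Proposition~\ref{unique_ab_ac_bc} via the criterion of Proposition~\ref{not_in_ab_ac_bc}, and maximize case by case. Your explicit borrow bookkeeping simply spells out the derivation of the three non-membership conditions that the paper states more tersely.
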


\begin{proof}
By Proposition \ref{unique_ab_ac_bc}, every positive integer $n$ satisfies $n = x  ab + y  ac + z  bc$ where $0 \le x \le c-1$ and $0 \le y \le b-1$, and by Proposition \ref{not_in_ab_ac_bc}, the integers not in ${\rm S}$ are precisely those for which $z < 0$.  We saw in the proof of Proposition \ref{genus_S'} that every element of ${\rm S}' \setminus {\rm S}$ is of the form $abc+1 + s = \gamma  bc +\beta  ac + (\alpha-a)  bc + s$ where $s \in {\rm S}$.  That is, every such element is of the form $(\gamma+x)ab+(\beta+y)ac+(\alpha-a + z) bc$ where $x,y,z\in \mb{N}$. We conclude that $M$ is the largest element $n \in \mb{N} \setminus S$ such that $n-(abc+1)$ is also in $\mb{N} \setminus S$.

\medskip
Now suppose $n = x' ab + y' ac + z' bc$, for some $0 \le x \le c-1$ and $0 \le y \le b-1$.
The observations of the preceding paragraph show that $n \not\in {\rm S}'$ if and only if
\begin{enumerate}
\item $z'<0$ and $x' < \gamma$; or
\item $z'<0$ and $y' < \beta$; or 
\item $z'< \alpha - a$.
\end{enumerate}
The largest integers satisfying each of these three conditions are precisely the elements used to define $M$.
\end{proof}

\noindent{\it Acknowledgements.} We thank Sinai Robins for helpful comments. The second author was supported by NSF grants DMS 1802281 and DMS 2154223.
\medskip

\noindent{\bf Declarations.}
\medskip

\noindent{\bf Data availability.} The authors declare that the data supporting the findings of this paper are available within it.

\noindent{\bf Conflict of interest.} On behalf of all authors, the first author states that there is
no conflict of interest.

\end{document}